\documentclass[11pt,reqno]{amsart}
\usepackage{enumerate}
\usepackage{enumitem}
\usepackage{mathrsfs}
\usepackage{url}
\usepackage{cite}
\usepackage{comment}
\usepackage{graphicx}
\usepackage{upgreek}
\usepackage{tikz}
\usetikzlibrary{arrows,shapes,trees,backgrounds}
\usepackage{fancyhdr}
\usepackage{amsfonts, amsmath, amssymb, amscd, amsthm, bm, cancel}
\usepackage[numbers,sort&compress]{natbib}
\usepackage[
linktocpage=true,colorlinks,citecolor=magenta,linkcolor=blue,urlcolor=magenta]{hyperref}
\usepackage[margin=1.1in]{geometry}

\newtheorem{proposition}{Proposition}[section]
\newtheorem{theorem}[proposition]{Theorem}
\newtheorem{lemma}[proposition]{Lemma}

\theoremstyle{definition}
\newtheorem*{ack}{Acknowledgements}
\theoremstyle{remark}
\newtheorem{remark}[proposition]{Remark}

\numberwithin{equation}{section}
\setcounter{tocdepth}{1}
\allowdisplaybreaks

\begin{document}

\title[Geometric inequalities in hyperbolic space]{Geometric inequalities and their stabilities for modified quermassintegrals in hyperbolic space}

\author[C. Gao]{Chaoqun Gao}
\address{School of Mathematical Sciences, University of Science and Technology of China, Hefei 230026, P.R. China}
\email{\href{mailto:gaochaoqun@mail.ustc.edu.cn}{gaochaoqun@mail.ustc.edu.cn}}
\author[R. Zhou]{Rong Zhou}
\email{\href{mailto:zhourong@mail.ustc.edu.cn}{zhourong@mail.ustc.edu.cn}}

\date{\today}
\subjclass[2020]{53C42; 53E10}
\keywords{modified quermassintegrals, weighted curvature integrals, horospherically convex}

\begin{abstract}
In this paper, we first consider the curve case of Hu-Li-Wei's flow for shifted principal curvatures of h-convex hypersurfaces in $\mathbb{H}^{n+1}$ proposed in \cite{Locallyconstrained}. We prove that if the initial closed curve is smooth and strictly h-convex, then the solution exists for all time and preserves strict h-convexity along the flow. Moreover, the evolving curve converges smoothly and exponentially to a geodesic circle centered at the origin. The key ingredient in our proof is the Heintze-Karcher type inequality for h-convex curves proved recently in \cite{Li2022Hyperbolic}.

As an application, we then provide a new proof of geometric inequalities involving weighted curvature integrals and modified quermassintegrals for h-convex curves in $\mathbb{H}^2$. We finally discuss the stability of these inequalities as well as Alexandrov-Fenchel type inequalities for modified quermassintegrals for strictly h-convex domains in $\mathbb{H}^{n+1}$. 
\end{abstract}

\maketitle



\section{Introduction}
In this paper, we discuss some locally constrained curvature flows and their applications to proving inequalities involving weighted curvature integrals and modified quermassintegrals for smooth h-convex domains in hyperbolic space $\mathbb{H}^{n+1}$.

We consider hyperbolic space as a warped product space $\mathbb{H}^{n+1}=[0,\infty)\times\mathbb{S}^n$ equipped with metric $\overline{g}=\mathrm{d}r^2 + \phi^2(r) g_{\mathbb{S}^n}$, where $\phi(r)=\sinh r,\ r\in[0,\infty)$. Let $\Phi(r)=\cosh r-1$. It's well known that $V=\overline{\nabla}\Phi=\phi(r)\partial_r$ is a conformal killing field.

A smooth bounded domain $\Omega$ in $\mathbb{H}^{n+1}$ is said to be h-convex (resp. strictly h-convex) if the principal curvatures of the boundary $\partial\Omega$ satisfy $\kappa_i\geqslant 1$ (resp. $\kappa_i>1$) for all $i=1,2,\cdots,n$. Let $M$ be a smooth closed hypersurface, denote $\nu$ as its unit outward normal vector, and its support function is defined by $u=\langle V,\nu \rangle$. We say that $M$ is star-shaped if its support function $u$ is positive everywhere on $M$.

\subsection{Curvature flows}
Locally constrained curvature flows have been studied widely in recent years. Brendle, Guan and Li \cite{B-G-L} (see also \cite{Li2021IsoperimetricTI}) introduced the flow $X:M^n\times [0,T)\to\mathbb{N}^{n+1}(K)$
\begin{equation}
	\partial_t X(x,t) = \left(\dfrac{\phi'(r) E_{k-1}(\kappa)}{E_k(\kappa)} - u \right)\nu(x,t),\ \  k=1,2,\cdots,n  \label{equ-bglsflow}
\end{equation}
for a family of smooth closed hypersurfaces in $n+1$ dimensional space forms with constant sectional curvature $K=1,-1$ and discuss some convergence results under several assumptions when $n\geqslant 2$. Here $E_k(\kappa)$ denotes the $k$th normalized elementary symmetric function of principal curvatures $\kappa$ of $M_t=X(M,t)$. 

For $n\geqslant 2$, Hu, Li and Wei \cite{Locallyconstrained} used the tensor maximum principle to verify that the h-convexity preserves along the flow (\ref{equ-bglsflow}) and thus proved the long time existence and exponential convergence to a geodesic sphere if the initial hypersurface is h-convex in hyperbolic space $\mathbb{H}^{n+1}$. 

Kwong, Wei, Wheeler and Wheeler \cite{twodim2022} considered the curve case of (\ref{equ-bglsflow}) in space forms and proved that if the initial curve is strictly convex (i.e. the curvature of curve $\kappa>0$), then the solution curve exists for all time and preserves strict convexity. Moreover, the evolving curve smoothly and exponentially converges to a geodesic circle centered at the origin.

In \cite{Locallyconstrained}, Hu, Li and  Wei introduced a new locally constrained curvature flow $X: M^n\times [0,T) \to \mathbb{H}^{n+1}$,
\begin{equation}
	\partial_t X(x,t) = \left( (\phi'(r)-u)\dfrac{E_{k-1}(\tilde{\kappa})}{E_k(\tilde{\kappa})} -u \right) \nu(x,t),\ \ k=1,2,\cdots,n, \label{equ-flowhy}
\end{equation}
where $\tilde{\kappa}_i=\kappa_i-1$ are shifted principal curvatures of $M_t=X(M,t)$, $E_k(\tilde{\kappa})$ is the normalized $k$th elementary symmetric function of $\tilde{\kappa}$. They proved the following long time existence and convergence results of the flow (\ref{equ-flowhy}) for strictly h-convex hypersurfaces in higher dimensions ($n\geqslant 2$).
\begin{theorem}[{\cite[Theorem 1.6]{Locallyconstrained}}]\label{thm-higherdimension}
	Let $X_0:M^n\to\mathbb{H}^{n+1}\ (n\geqslant 2)$ be a smooth embedding such that $M_0=X_0(M)$ is a smooth, strictly h-convex hypersurface in $\mathbb{H}^{n+1}$. Then the flow (\ref{equ-flowhy}) has a smooth solution for all time $t\in[0,\infty)$, and $M_t=X_t(M)$ is strictly h-convex for each $t>0$ and converges smoothly and exponentially to a geodesic sphere centered at the origin.
\end{theorem}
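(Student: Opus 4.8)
The plan is to run the by-now standard program for \emph{convexity-preserving, locally constrained} curvature flows, with the new feature that the speed is built from the \emph{shifted} curvature quotient $\Theta:=E_{k-1}(\tilde\kappa)/E_k(\tilde\kappa)$, where $\tilde\kappa=\kappa-\mathbf 1$. First I would record the basic structural facts on the strictly h-convex cone $\Gamma_+=\{\tilde\kappa_i>0\}$: there $\Theta$ is smooth, strictly monotone increasing in each $\tilde\kappa_i$, homogeneous of degree $-1$, and $E_k/E_{k-1}=1/\Theta$ is concave (so that $\Theta$ is inverse-concave). Since $M_0$ is strictly h-convex we have $u=\langle V,\nu\rangle\le\phi(r)=\sinh r<\cosh r=\phi'(r)$, so the factor $\phi'-u$ is strictly positive and the linearization $\partial F/\partial h_{ij}=(\phi'-u)\,\partial\Theta/\partial h_{ij}$ is positive definite on $\Gamma_+$; hence the flow is strictly parabolic. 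Writing $M_t$ as a radial graph $r=r(\cdot,t)$ over $\mathbb S^n$ (legitimate because $u>0$ forces star-shapedness) turns (\ref{equ-flowhy}) into a scalar parabolic PDE, and short-time existence and uniqueness follow from standard quasilinear theory.

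The heart of the argument, and the step I expect to be the main obstacle, is to show that \emph{strict h-convexity is preserved}. I would compute the evolution equation of the shifted Weingarten tensor $\tilde h^i_j=h^i_j-\delta^i_j$ along (\ref{equ-flowhy}) and apply the tensor maximum principle of Hu--Li--Wei. The delicate point is the sign of the gradient (first-order) reaction terms that appear at a would-be null eigenvector of $\tilde h$: these must be shown nonnegative, which is exactly where the inverse-concavity and degree-$(-1)$ homogeneity of $\Theta$, together with the positivity of $\phi'-u$ and the Codazzi identities, are used. The outcome is that the smallest eigenvalue of $\tilde h$ stays strictly positive, so the solution remains strictly h-convex and, in particular, stays in the interior of $\Gamma_+$ where the flow is uniformly parabolic. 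In parallel, a maximum principle comparison with concentric geodesic spheres preserves star-shapedness and yields the two-sided bound $r_-\le r(\cdot,t)\le r_+$.

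With the geometry trapped, I would next establish the remaining a priori estimates. Exploiting the divergence/Minkowski structure of the speed, the flow \emph{preserves the modified quermassintegral $\tilde W_k$} and is \emph{monotone in $\tilde W_{k-1}$}; this monotone quantity, being bounded by the $C^0$ estimate, will drive the convergence and also reinforces the radius bounds. A gradient estimate for $r$ gives the $C^1$ bound, and a maximum principle applied to the largest principal curvature (using the uniform bound on the speed $F$ and uniform parabolicity away from $\partial\Gamma_+$) gives an upper curvature bound; combined with $\kappa_i>1$ this produces two-sided curvature bounds. Uniform parabolicity together with the concavity of $1/\Theta$ then lets Krylov--Safonov and Evans--Krylov supply a $C^{2,\alpha}$ estimate, after which Schauder estimates and bootstrapping give uniform bounds on all higher derivatives. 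The uniform estimates rule out finite-time singularities, so the solution exists for all $t\in[0,\infty)$.

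Finally, for convergence I would use that $\tilde W_{k-1}$ is monotone and bounded, hence converges, forcing $\tfrac{d}{dt}\tilde W_{k-1}\to0$; since the corresponding integrand is nonnegative and vanishes precisely at umbilic points, the hypersurfaces become asymptotically totally umbilic, and the radial oscillation $\max r-\min r$ tends to $0$. As star-shapedness is measured from the origin, the only possible limit is a geodesic sphere centered at the origin, of the radius pinned down by the conserved value of $\tilde W_k$. To upgrade smooth subsequential convergence to \emph{exponential} convergence, I would linearize (\ref{equ-flowhy}) about this limiting sphere: the linearized operator is self-adjoint with a spectral gap on the space of graph functions, which yields exponential decay of the radial function and, via the higher-order estimates and interpolation, of all derivatives.
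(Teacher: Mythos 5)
The first thing to say is that this paper contains no proof of Theorem \ref{thm-higherdimension} to compare you against: the result is imported verbatim from \cite[Theorem 1.6]{Locallyconstrained} and used as a black box in the proofs of Theorems \ref{thm-inequalityhighdim} and \ref{thm-stabilityhypersurface}. Judged against what the paper reports about that cited proof, your outline does reproduce its architecture: preservation of strict h-convexity via the tensor maximum principle is exactly what the introduction attributes to Hu--Li--Wei; the two-sided bounds on $F=E_k(\tilde\kappa)/E_{k-1}(\tilde\kappa)$ and the upper bounds on $\tilde\kappa_i$ you invoke are precisely the estimates this paper later quotes as \cite[Lemmas 7.1, 7.2 and Proposition 7.4]{Locallyconstrained}; conservation of $\tilde W_k$ along \eqref{equ-flowhy} via the shifted Minkowski formula \eqref{equ-shiftedminkowski}, and monotonicity of the adjacent quantity by Newton--MacLaurin, are the same mechanisms the paper itself exploits in Section \ref{section-4}; and the linearization/spectral-gap route to exponential convergence parallels the paper's own $n=1$ argument in Section \ref{section-3}.

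Two points in your sketch are, however, wrong or incomplete as written. First, a sign error that matters for well-posedness: on the strictly h-convex cone, $\Theta=E_{k-1}(\tilde\kappa)/E_k(\tilde\kappa)$ is strictly \emph{decreasing} in each $\tilde\kappa_i$ (already for $k=1$ one has $\Theta=n/\sum_i\tilde\kappa_i$), so the linearized operator $(\phi'-u)\,\dot\Theta^{ij}$ is \emph{negative} definite; for a flow with outward normal speed this negativity is exactly what makes the scalar graph equation parabolic. Taken literally, your claims (``$\Theta$ strictly monotone increasing'', ``linearization positive definite'') would make \eqref{equ-flowhy} backward parabolic and short-time existence would fail, so the statement needs to be corrected rather than merely re-signed in passing. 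Second, the claim that ``the only possible limit is a geodesic sphere centered at the origin'' because ``star-shapedness is measured from the origin'' is not an argument: every geodesic sphere enclosing the origin is star-shaped with respect to it. One must actually rule out off-center limit spheres, for instance by noting that any smooth limit must be a stationary solution of \eqref{equ-flowhy} while on an off-center sphere the speed $(\phi'-u)E_{k-1}(\tilde\kappa)/E_k(\tilde\kappa)-u$ does not vanish identically; this is precisely the contradiction argument the paper runs for $n=1$ in Section \ref{section-3}, and the analogous step cannot be skipped in higher dimensions. With those two repairs, your proposal is a credible reconstruction of the strategy of the cited proof, though of course the hard technical core (the tensor maximum principle computation and the curvature bounds) is only gestured at, not carried out.
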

The convergence results for $n=1$ has not been obtained yet. In the first part of this paper, we aim to make up this gap. When $n=1$, the flow (\ref{equ-flowhy}) becomes
\begin{equation}
	\partial_t X(x,t)= \left( \dfrac{\phi'(r)-u}{\kappa-1}-u \right)\nu(x,t). \label{equ-flow}
\end{equation}
Let's consider a family of immersed closed strictly h-convex curves $X:\mathbb{S}^1\times [0,T)\to \mathbb{H}^2$ satisfying the above flow equation and we will prove the long time existence and convergence results for the flow (\ref{equ-flow}).
\begin{theorem}
	\label{thm-existenceandconvergence}
	Let $X_0:\mathbb{S}^1 \to \mathbb{H}^2$ be a smooth strictly h-convex closed curve containing the origin. There exists a unique solution $X:\mathbb{S}^1\times [0,\infty) \to \mathbb{H}^2$ to the flow \eqref{equ-flow} such that $X(\cdot,0)=X_0$. For each $t\in[0,\infty)$, $\gamma_t=X(\mathbb{S}^1,t)$ is strictly h-convex and converges smoothly and exponentially to a geodesic circle $\gamma_{\infty}=X_{\infty}(\mathbb{S}^1)$ centered at the origin.
\end{theorem}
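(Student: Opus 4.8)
The plan is to treat \eqref{equ-flow} as a fully nonlinear scalar parabolic equation and to run the standard flow scheme---short-time existence, preservation of the convexity condition, uniform a priori estimates, long-time existence, and convergence---with the Heintze--Karcher inequality supplying the estimate that the degenerate speed requires. Since a strictly h-convex curve containing the origin is a radial graph $r=r(\theta,t)$ over $\mathbb{S}^1$, I would first rewrite \eqref{equ-flow} as a scalar equation for $r$ (equivalently, for the support function via the Gauss map). The linearization in the top-order term is governed by $\partial F/\partial\kappa=-(\phi'-u)(\kappa-1)^{-2}$, and since $\phi'-u=\cosh r-\sinh r\,\langle\partial_r,\nu\rangle\geq e^{-r}>0$ everywhere, composing with $\partial\kappa/\partial r_{\theta\theta}<0$ shows the equation is strictly (forward) parabolic on the open set $\{\kappa>1\}$. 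Short-time existence and uniqueness then follow from standard parabolic theory, giving a maximal time $T>0$.

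The crux is the preservation of strict h-convexity, i.e.\ a uniform bound $\kappa-1\geq c>0$, since the speed degenerates as $\kappa\to1^+$. I would derive the evolution equation for the shifted curvature $\tilde\kappa=\kappa-1$; it is parabolic with leading term $(\phi'-u)(\kappa-1)^{-2}\,\partial_s^2\tilde\kappa$ (consistent with the parabolicity of the flow), together with reaction terms that are algebraic in $\tilde\kappa$, $u$ and $\phi'$. Evaluating at a spatial minimum of $\tilde\kappa$, where $\partial_s^2\tilde\kappa\geq0$, I would check that the reaction terms carry the favourable sign once $\tilde\kappa$ is small, so that $\min_\gamma\tilde\kappa$ cannot decay to $0$ in finite time. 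This is the step I expect to be the main obstacle, and it is where the Heintze--Karcher type inequality for h-convex curves of \cite{Li2022Hyperbolic} enters: it controls the averaged size of the otherwise unbounded factor $(\phi'-u)(\kappa-1)^{-1}$ and converts the local sign computation into a uniform positive lower bound on $\tilde\kappa$. An analogous maximum-principle argument at a spatial maximum of $\kappa$ yields the matching upper bound.

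For the uniform geometric bounds I would exploit a clean feature of \eqref{equ-flow}: a direct maximum-principle argument on the radial function. At an outermost point the curve touches the geodesic circle of radius $r_{\max}$ from inside, so $\kappa\geq\coth r_{\max}$; substituting into the speed gives
\[
F=\frac{e^{-r_{\max}}}{\kappa-1}-\sinh r_{\max}\leq\frac{e^{-r_{\max}}}{\coth r_{\max}-1}-\sinh r_{\max}=0,
\]
so that $r_{\max}(t)$ is non-increasing, and symmetrically $\kappa\leq\coth r_{\min}$ forces $r_{\min}(t)$ to be non-decreasing. Hence the evolving curve stays in a fixed annulus and the oscillation $r_{\max}-r_{\min}$ is monotone. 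Combined with the two-sided curvature bound from the previous step, the flow becomes uniformly parabolic with non-degenerate, uniformly bounded coefficients; Krylov--Safonov and Schauder estimates then yield uniform $C^\infty$ bounds, and a standard continuation argument upgrades the solution to all of $[0,\infty)$.

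Finally, for convergence I observe that stationary solutions of \eqref{equ-flow} are characterized by $F\equiv0$, i.e.\ $\phi'=u\kappa$, which holds exactly for geodesic circles centred at the origin. With the uniform $C^\infty$ bounds in hand and $r_{\max}-r_{\min}$ monotone, I would show that the oscillation tends to $0$---using the preserved modified quermassintegral together with the monotonicity of the companion quantity to exclude any nontrivial $\omega$-limit---so that the flow converges to a geodesic circle centred at the origin, whose radius is pinned by the conserved quantity. To promote $C^0$ convergence to smooth exponential convergence I would linearize \eqref{equ-flow} at the limiting circle and invoke the spectral gap of the linearized operator, or equivalently run the maximum principle on the evolution of $\kappa$ to obtain $\operatorname{osc}_\gamma\kappa\leq Ce^{-\delta t}$, and then interpolate against the uniform higher-order bounds to get exponential decay in every $C^m$ norm. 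The single most delicate point throughout remains the uniform lower bound $\kappa-1\geq c>0$, since parabolicity, regularity, and the rate of convergence all rest on the speed staying non-degenerate.
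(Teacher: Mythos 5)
Your skeleton (scalar parabolic formulation, maximum principles, Krylov--Safonov/Schauder, linearization at the limit) matches the paper, and your parabolicity check and $C^0$ estimate via the touching-circle argument are exactly right. But the step you yourself identify as the crux --- the uniform lower bound $\kappa-1\geqslant c>0$ --- is left with a mechanism that does not work. The Heintze--Karcher inequality of Li--Xu is a \emph{global integral} statement, $\int_{\gamma}\bigl(\frac{\phi'-u}{\kappa-1}-u\bigr)\,\mathrm{d}s\geqslant 0$; there is no way to feed an averaged bound on the factor $(\phi'-u)(\kappa-1)^{-1}$ into a pointwise maximum-principle argument at a spatial minimum of $\tilde\kappa$, so it cannot ``convert the local sign computation into a uniform positive lower bound.'' What actually happens for this particular flow is stronger and simpler: carrying out the evolution equation of $\kappa$ using $\partial_s^2\phi'=\phi'-\kappa u$ and $\partial_s^2 u=\partial_s\kappa\,\langle V,T\rangle+\kappa(\phi'-\kappa u)$, \emph{all} zeroth-order reaction terms cancel identically, leaving
\begin{equation*}
\partial_t\kappa=\frac{\phi'-u}{(\kappa-1)^2}\,\partial_s^2\kappa+\partial_s\kappa\left[\partial_s\!\left(\frac{\phi'-u}{(\kappa-1)^2}\right)+\langle V,T\rangle\right],
\end{equation*}
so the maximum principle gives $\min\kappa_0\leqslant\kappa(\cdot,t)\leqslant\max\kappa_0$ directly, with no smallness analysis and no Heintze--Karcher. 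You flagged the obstacle correctly but pointed to a resolution that is not viable; the actual resolution is an exact cancellation you would only find by doing the computation.

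Heintze--Karcher does enter, but in the \emph{convergence} step, where your proposal is also incomplete. The paper's mechanism is: $A'(t)=Q(t):=\int_{\gamma_t}\bigl(\frac{\phi'-u}{\kappa-1}-u\bigr)\,\mathrm{d}s\geqslant 0$ by Heintze--Karcher, $A$ is bounded by the $C^0$ estimate, hence $Q\in L^1(0,\infty)$; along a subsequence $Q(t_j)\to 0$, Arzel\`a--Ascoli gives a smooth limit attaining equality in Heintze--Karcher, and the rigidity case identifies it as a geodesic circle (a separate stationarity argument pins the center at the origin). Your substitute --- excluding nontrivial $\omega$-limits from monotonicity of $r_{\max}-r_{\min}$ and the preserved quantity $L-A$ --- is not carried out, and monotone oscillation alone does not force the oscillation to zero. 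Finally, for exponential convergence the bare ``spectral gap'' argument fails as stated: the linearized operator $\frac{1}{\phi'(\rho_\infty)-\phi(\rho_\infty)}\partial_\theta^2$ annihilates constants, so the zero mode does not decay under the linearized flow. The paper removes this mode by using the conservation of $L-A$ to show $\int_{\mathbb{S}^1}\sigma\,\mathrm{d}\theta=O(\|\sigma\|_{C^2(\mathbb{S}^1)}^2)$ for $\sigma=\rho-\rho_\infty$, and only then closes the decay estimate via Gagliardo--Nirenberg interpolation; using the conserved quantity merely to ``pin the radius,'' as you do, is not enough.
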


We remark that for a strictly h-convex initial curve, the short time existence of the flow \eqref{equ-flow} is guaranteed by standard theory of parabolic equations. The long time existence follows by the standard procedure after a priori estimates. To show the convergence, the key ingredient is the Heintze-Karcher type inequality for h-convex curves proved in \cite[Proposition 8.2]{Li2022Hyperbolic}, from which we know that the area of domains enclosed by h-convex curves is monotonically non-decreasing along the flow (\ref{equ-flow}). It also helps us to find the limit curve be a geodesic circle and to prove smooth convergence. In the proof of exponential convergence, inspired by \cite[Section 4.3]{twodim2022} and \cite[Section 9]{Wei2022AFN}, we consider the linearization of the flow \eqref{equ-scalaflow} at infinity and used the spherical harmonics to expand functions in $L^2(\mathbb{S}^1)$ to obtain asymptotic behavior around the infinite time.

\subsection{Applications to geometric inequalities}
In the second part of this paper, we will use the flow (\ref{equ-flow}) to provide a new proof of some geometric inequalities involving weighted curvature integrals and modified quermassintegrals for h-convex curves in $\mathbb{H}^2$. (See Section \ref{subsec-quermassintegrals} for precise definitions.)

Many scholars studied the proof of Alexandrov-Fenchel type inequalities in $\mathbb{H}^{n+1}$ by using curvature flow methods. In 2014, Wang and Xia \cite{Wang-Xia2014} proved the quermassintegral inequalities for a h-convex domain $\Omega$ in $\mathbb{H}^{n+1}\ (n\geqslant 2)$,
\begin{equation}
	W_k(\Omega) \geqslant f_k\circ f_{\ell}^{-1}(W_{\ell}(\Omega)),\ \ 0\leqslant \ell<k\leqslant n \label{equ-quermass}
\end{equation}
with equality holds if and only if $\Omega$ is a geodesic ball. Here $f_k:[0,\infty)\to\mathbb{R}^+$ is a monotone function defined by $f_k(r)=W_k(\overline{B}_r)$, the $k$th quermassintegral for the geodesic ball of radius $r$, and $f_{\ell}^{-1}$ is the inverse function of $f_{\ell}$. In 2021, Andrews, Chen and Wei \cite{Ben2021} first introduced modified quermassintegrals for h-convex domains and proved inequalities for a bounded, smooth strictly h-convex domain $\Omega\subset\mathbb{H}^{n+1}\ (n\geqslant 2)$,
\begin{equation}
	\tilde{W}_k(\Omega) \geqslant \tilde{f}_k\circ \tilde{f}_{\ell}^{-1}(\tilde{W}_{\ell}(\Omega)),\ \ 0\leqslant \ell<k\leqslant n. \label{equ-modifiedquer}
\end{equation}
Equality holds in \eqref{equ-modifiedquer} if and only if $\Omega$ is a geodesic ball. Here $\tilde{f}_k:[0,\infty)\to\mathbb{R}^+$ is a monotone function defined by $\tilde{f}_k(r)=\tilde{W}_k(\overline{B}_r)$, the $k$th modified quermassintegral for the geodesic ball of radius $r$, and $\tilde{f}_{\ell}^{-1}$ is the inverse function of $\tilde{f}_{\ell}$.

Later, Hu-Li-Wei \cite{Locallyconstrained} applied the flow (\ref{equ-bglsflow}) and (\ref{equ-flowhy}) for h-convex hypersurfaces in $\mathbb{H}^{n+1}$ to provide a new proof of inequalities (\ref{equ-quermass}) and (\ref{equ-modifiedquer}) respectively. They also used the flow (\ref{equ-flowhy}) to derive inequalities involving weighted curvature integrals and modified quermassintegrals for a strictly h-convex domain $\Omega$ in $\mathbb{H}^{n+1}\ (n\geqslant 2)$ (see \cite[Theorem 1.8]{Locallyconstrained}): For each $k=1,2,\cdots,n$, there holds
\begin{equation}
	\int_{\partial \Omega} \left(\phi'(r)-u \right) E_k(\tilde{\kappa}) \mathrm{d}\mu \geqslant \tilde{g}_k\circ\tilde{f}_k^{-1}(\tilde{W}_k(\Omega)), \label{equ-inequality11}
\end{equation}
where $\tilde{g}_k(r)=\int_{\partial\overline{B}_r}\left(\phi'(r)-u \right) E_k(\tilde{\kappa}) \mathrm{d}\mu$, $\tilde{f}_k(r)=\tilde{W}_k(\overline{B}_{r})$. Equality holds in \eqref{equ-inequality11} if and only if $\Omega$ is a geodesic ball centered at the origin.

Recently, Kwong and Wei \cite{KWONG2023109213} investigated a family of three-term geometric inequalities involving weighted curvature integrals and quermassintegrals for hypersurfaces in space forms. As a special case of \cite[Theorem 5.4]{KWONG2023109213}, they proved that for a smooth bounded h-convex domain $\Omega$ in $\mathbb{H}^{n+1}\ (n\geqslant 2)$, there holds
\begin{equation}
	\int_{\partial\Omega} \Phi E_k(\kappa) \mathrm{d}\mu + kW_{k-1}(\Omega) \geqslant (\xi_k+kf_{k-1})\circ f_{\ell}^{-1} (W_{\ell}(\Omega))   \label{equ-threeterms}
\end{equation}
for all $k=1,2,\cdots,n$ and $l=0,1,\cdots,k$. Equality holds in \eqref{equ-threeterms} if and only if $\Omega$ is a geodesic ball centered at the origin. $\xi_k(r)=\int_{\partial\overline{B}_r} \Phi E_k(\kappa) \mathrm{d}\mu$, $f_k(r)=W_k(\overline{B}_r)$.

Li and Xu proposed a conjecture of horospherical $p$-Minkowski inequalities in \cite[Conjecture 10.5]{Li2022Hyperbolic}. Especially, they proved some inequalities which are special cases of the conjecture from a horospherically convex geometric point of view. They used the flow \eqref{equ-flowhy} and the convergence result (Theorem \ref{thm-higherdimension}) to prove the following three-term inequality involving weighted curvature integrals and modified quermassintegrals for strictly h-convex hypersurfaces. We state their result in our setting.

\begin{theorem}[{\cite[Theorem 10.9]{Li2022Hyperbolic}}]
	\label{thm-inequalityhighdim}
	Let $M$ be a smooth strictly h-convex hypersurface enclosing a domain $\Omega$ in $\mathbb{H}^{n+1}\ (n\geqslant 2)$. For any $k=1,2,\cdots,n$, there holds
	\begin{equation}
		\int_M (\Phi-u) E_k(\tilde{\kappa}) \mathrm{d}\mu + (n-2k) \tilde{W}_k(\Omega) \geqslant \tilde{h}_k\circ \tilde{f}_k^{-1}(\tilde{W}_k(\Omega)), \label{equ-inequality1}
	\end{equation}
	where $\tilde{h}_k(r)= \int_{\partial\overline{B}_r}(\Phi-u)E_k(\tilde{\kappa}) \mathrm{d}\mu + (n-2k) \tilde{W}_k(\overline{B}_r)$, $\tilde{f}_k(r)=\tilde{W}_k(\overline{B}_r)$, $\overline{B}_r$ is a geodesic ball centered at the origin, $\tilde{f}_k^{-1}$ is the inverse function of $\tilde{f}_k(r)$. Equality holds if and only if $M$ is a geodesic sphere centered at the origin.
\end{theorem}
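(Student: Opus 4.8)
The plan is to prove \eqref{equ-inequality1} by evolving $M=M_0$ under the flow \eqref{equ-flowhy} with the \emph{same} index $k$ and extracting monotonicity of a suitable functional. By Theorem \ref{thm-higherdimension}, for a strictly h-convex initial hypersurface the flow exists for all $t\in[0,\infty)$, stays strictly h-convex, and converges smoothly and exponentially to a geodesic sphere $\partial\overline{B}_{r_\infty}$ centered at the origin. The first structural input I would record is that \eqref{equ-flowhy} is built precisely so that the modified quermassintegral is preserved, $\tilde W_k(\Omega_t)\equiv\tilde W_k(\Omega)$ for all $t$; this follows from the variational formula for $\tilde W_k$ under a normal speed $F\nu$ specialized to $F=(\phi'-u)E_{k-1}(\tilde\kappa)/E_k(\tilde\kappa)-u$. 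In particular $\tilde f_k^{-1}(\tilde W_k(\Omega_t))$ is constant along the flow and equals $r_\infty$ in the limit.

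Next I would set
\[
\mathcal{W}(t)=\int_{M_t}(\Phi-u)E_k(\tilde\kappa)\,\mathrm{d}\mu,\qquad Q(t)=\mathcal{W}(t)+(n-2k)\tilde W_k(\Omega_t),
\]
so that, by preservation of $\tilde W_k$, the term $(n-2k)\tilde W_k(\Omega_t)$ is constant and $Q'(t)=\mathcal{W}'(t)$. The heart of the argument is to show $Q'(t)\le 0$. To compute $\mathcal{W}'(t)$ I would use the standard evolution equations along $\partial_t X=F\nu$, namely $\partial_t(\mathrm{d}\mu)=HF\,\mathrm{d}\mu$, the identities $\partial_t\Phi=\partial_t\phi'=Fu$ and $\partial_t u=\phi'F-\langle V,\nabla F\rangle$ (so that $\partial_t(\Phi-u)=-(\phi'-u)F+\langle V,\nabla F\rangle$), together with the evolution of $E_k(\tilde\kappa)$ through its linearized operator $\dot E_k^{ij}$. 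Using the Gauss equation in $\mathbb{H}^{n+1}$, the relation $\Phi=\phi'-1$, the divergence-free property of the Newton tensor associated with $E_k(\tilde\kappa)$, and the hyperbolic Minkowski-type identities for the shifted curvatures, I would integrate by parts to cast $\mathcal{W}'(t)$ as a single boundary-free integral.

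The decisive step is the sign of that integral. After the reduction I expect $\mathcal{W}'(t)$ to be governed by the sharp Newton--MacLaurin inequality $E_{k-1}(\tilde\kappa)E_{k+1}(\tilde\kappa)\le E_k(\tilde\kappa)^2$, which is available because strict h-convexity makes $\tilde\kappa=(\kappa_i-1)$ a positive vector, combined with the pointwise positivity $\phi'-u>0$ (indeed $u\le\sinh r<\cosh r=\phi'$); these yield $Q'(t)=\mathcal{W}'(t)\le 0$. Tracking the equality cases, $\mathcal{W}'(t)=0$ forces the Newton--MacLaurin equality, i.e.\ all $\tilde\kappa_i$ equal so that $M_t$ is totally umbilical, hence a geodesic sphere, and forces the remaining gradient terms to vanish, i.e.\ $F$ is constant; since an off-center geodesic sphere has non-constant $F$, this upgrades the conclusion to a geodesic sphere centered at the origin, the latter being exactly the stationary solutions of \eqref{equ-flowhy}.

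Finally I would pass to the limit. Since $M_\infty=\partial\overline{B}_{r_\infty}$ and $\tilde W_k(\overline{B}_{r_\infty})=\tilde W_k(\Omega)$ gives $r_\infty=\tilde f_k^{-1}(\tilde W_k(\Omega))$, the definition of $\tilde h_k$ yields $\lim_{t\to\infty}Q(t)=\tilde h_k(r_\infty)=\tilde h_k\circ\tilde f_k^{-1}(\tilde W_k(\Omega))$. Monotonicity then gives $Q(0)\ge\lim_{t\to\infty}Q(t)$, which is exactly \eqref{equ-inequality1}; and if equality holds then $Q$ is constant, so $\mathcal{W}'(t)\equiv 0$ and the equality discussion identifies $M_0$ as an origin-centered geodesic sphere, while the converse is a direct computation. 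I expect the main obstacle to be the evolution computation of $\mathcal{W}(t)$: correctly handling the linearization of $E_k(\tilde\kappa)$ and organizing the curvature and gradient terms, via the divergence-free Newton tensor and the shifted Minkowski formulas, so that the surviving integrand is manifestly controlled by Newton--MacLaurin with the clean equality characterization.
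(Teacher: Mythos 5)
Your proposal is correct and follows essentially the same route as the paper: evolve $M$ by the flow \eqref{equ-flowhy} with the same index $k$, use the shifted Minkowski formula \eqref{equ-shiftedminkowski} to see that $\tilde{W}_k(\Omega_t)$ is preserved, show via the evolution equations, the divergence-free Newton tensor, and the Newton--MacLaurin inequality that $Q(t)=\int_{M_t}(\Phi-u)E_k(\tilde{\kappa})\,\mathrm{d}\mu_t+(n-2k)\tilde{W}_k(\Omega_t)$ is non-increasing (in the paper the final sign comes from the positive-definite quadratic forms $\dot{E}_{k+1}^{ij}\nabla_j\phi'\nabla^i\phi'$ and $\dot{E}_k^{ij}S_j^l\nabla_l\phi'\nabla^i\phi'$, with equality forcing $\nabla\phi'\equiv 0$, i.e.\ an origin-centered geodesic sphere), and then pass to the limit $t\to\infty$ using Theorem \ref{thm-higherdimension}. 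The only cosmetic difference is your phrasing of the equality case via umbilicity and constancy of the speed rather than via $\nabla\phi'\equiv 0$, which amounts to the same conclusion.
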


Note that when $n=1$, we have $\tilde{W}_1(\Omega)=L[\gamma]-A[\gamma]$ by \eqref{equ-defofmodiquer}, where $L[\gamma]$ denotes the length of the curve $\gamma$, $A[\gamma]$ denotes the area of the domain $\Omega$ enclosed by $\gamma$. It has been proved in \cite[Theorem 10.13]{Li2022Hyperbolic} that \eqref{equ-inequality1} holds for a smooth strictly h-convex curve in $\mathbb{H}^2$ by using horospherical Gauss map.

Motivated by the work \cite{KWONG2023109213,Li2022Hyperbolic}, we provide a new proof of inequality \eqref{equ-inequality1} for h-convex curves in $\mathbb{H}^{2}$ by using the flow (\ref{equ-flow}) and the convergence result (Theorem \ref{thm-existenceandconvergence}).

\begin{theorem}
	\label{thm-inequality2dim}
	Let $\gamma$ be a smooth h-convex closed curve enclosing a domain $\Omega$ in $\mathbb{H}^2$. There holds
	\begin{equation}
		\int_{\gamma} (\Phi-u)(\kappa-1) \mathrm{d}s + (L[\gamma]-A[\gamma]) \geqslant \dfrac{1}{2\pi}(L[\gamma]-A[\gamma])^2 \label{equ-ineq}
	\end{equation}
	with equality if and only if $\gamma$ is a geodesic circle centered at the origin. Here $\Phi(r)=\cosh r - 1$, $u=\langle V,\nu \rangle$ is the support function of $\gamma$, $L[\gamma]$ denotes the length of $\gamma$, $A[\gamma]$ denotes the area of the domain $\Omega$ enclosed by $\gamma$.
\end{theorem}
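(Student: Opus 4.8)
The plan is to flow the curve by \eqref{equ-flow} and monitor two quantities along the solution $\gamma_t$: the \emph{conserved} quantity $L[\gamma_t]-A[\gamma_t]=\tilde{W}_1(\Omega_t)$ and the functional
\[
\mathcal{F}(t):=\int_{\gamma_t}(\Phi-u)(\kappa-1)\,\mathrm{d}s+\bigl(L[\gamma_t]-A[\gamma_t]\bigr),
\]
whose value at $t=0$ is precisely the left-hand side of \eqref{equ-ineq}. Since \eqref{equ-flow} is singular where $\kappa=1$, I would first treat the strictly h-convex case and recover the general h-convex case at the end by a $C^\infty$ approximation together with the continuity of $L$, $A$ and the weighted integral under $C^2$ convergence. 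For a strictly h-convex $\gamma$ enclosing the origin, Theorem \ref{thm-existenceandconvergence} gives a global solution converging smoothly to a geodesic circle centred at the origin.

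Next I would assemble the variational formulas under $\partial_t X=f\nu$ with $f=\frac{\phi'-u}{\kappa-1}-u$: $\partial_t(\mathrm{d}s)=f\kappa\,\mathrm{d}s$, $\partial_t\Phi=fu$, $\partial_t u=\phi'f-\Phi_s f_s$ and $\partial_t\kappa=-f_{ss}-(\kappa^2-1)f$, supplemented by the intrinsic identities $\Phi_{ss}=\phi'-\kappa u$, $\phi'_s=\Phi_s$ and $u_s=\kappa\Phi_s$. Integrating $\Phi_{ss}=\phi'-\kappa u$ over the closed curve gives the Minkowski identity $\int_{\gamma}(\phi'-\kappa u)\,\mathrm{d}s=0$; combined with $\tfrac{d}{dt}L=\int f\kappa\,\mathrm{d}s$, $\tfrac{d}{dt}A=\int f\,\mathrm{d}s$ and the pointwise relation $(\kappa-1)f=\phi'-\kappa u=\Phi_{ss}$, this yields $\tfrac{d}{dt}(L-A)=0$, so $L-A$ is preserved. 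Substituting the four evolution equations into $\tfrac{d}{dt}\!\int(\Phi-u)(\kappa-1)\,\mathrm{d}s$ and integrating by parts (the terms containing $\kappa_s\Phi_s f$ cancel), I expect the expression to collapse, after replacing $(\kappa-1)f$ by $\Phi_{ss}$ and using $\int\Phi_{ss}\,\mathrm{d}s=0$, to
\[
\frac{d}{dt}\mathcal{F}(t)=-2\int_{\gamma_t}\Phi_{ss}(\phi'-u)\,\mathrm{d}s=-2\int_{\gamma_t}(\kappa-1)\Phi_s^{2}\,\mathrm{d}s\le 0,
\]
the last equality being one further integration by parts using $(\phi'-u)_s=(1-\kappa)\Phi_s$.

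Hence $\mathcal{F}$ is non-increasing. Evaluating on a geodesic circle of radius $r$ centred at the origin (where $u=\sinh r$, $\kappa=\coth r$, $L-A=2\pi(1-e^{-r})$) gives $\mathcal{F}=2\pi(1-e^{-r})^2=\frac{1}{2\pi}(L-A)^2$, so by the smooth convergence in Theorem \ref{thm-existenceandconvergence} we get $\mathcal{F}(t)\to\frac{1}{2\pi}(L_\infty-A_\infty)^2$. Because $L-A$ is conserved, $L_\infty-A_\infty=L[\gamma]-A[\gamma]$, and monotonicity gives $\mathcal{F}(0)\ge\frac{1}{2\pi}(L[\gamma]-A[\gamma])^2$, which is \eqref{equ-ineq}. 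For the equality case, $\mathcal{F}(0)=\mathcal{F}(\infty)$ forces $\tfrac{d}{dt}\mathcal{F}\equiv0$, hence $\int(\kappa-1)\Phi_s^2\,\mathrm{d}s=0$; since $\kappa>1$ this means $\Phi_s\equiv0$, i.e. $r$ is constant along $\gamma$ and $\gamma$ is a geodesic circle centred at the origin. As such a circle is strictly h-convex, the equality characterisation survives the approximation step.

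The main obstacle is the monotonicity computation in the second paragraph: getting the signs right in the evolution equations for $\kappa$ and for the support function $u$, and verifying that the many terms telescope to the manifestly nonpositive $-2\int(\kappa-1)\Phi_s^2\,\mathrm{d}s$. These cancellations hinge on the specific speed $f=\frac{\phi'-u}{\kappa-1}-u$ via $(\kappa-1)f=\Phi_{ss}$, so the algebra must be handled carefully. A secondary, more routine point is justifying the approximation of a merely h-convex curve (where $\kappa=1$ may occur and the flow cannot be started) by strictly h-convex curves, which I would settle by density in $C^\infty$ and continuity of the functionals involved.
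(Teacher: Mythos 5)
Your Step 1 (the strictly h-convex case) is essentially the paper's own argument: evolve by \eqref{equ-flow}, use conservation of $L-A$ (the paper's Lemma \ref{lemma-la}), show the monotonicity
$\frac{d}{dt}\int_{\gamma_t}(\Phi-u)(\kappa-1)\,\mathrm{d}s=-2\int_{\gamma_t}(\kappa-1)\Phi_s^2\,\mathrm{d}s\leqslant 0$
(the paper's Lemma \ref{lemma-mono}; your intermediate form $-2\int\Phi_{ss}(\phi'-u)\,\mathrm{d}s$ is equivalent since $\phi'=\Phi+1$ and $\int\Phi_{ss}\,\mathrm{d}s=0$), evaluate on the limit circle, and get rigidity from $\Phi_s\equiv 0$. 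All of that checks out, including your value $2\pi(1-e^{-r})^2$ on a centred circle.

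The genuine gap is in your final paragraph, on two counts. First, the approximation itself: you cannot get strictly h-convex approximants of a merely h-convex curve from ``density in $C^\infty$'' --- an arbitrary small smooth perturbation of an h-convex curve need not satisfy $\kappa>1$, and even natural candidates fail (e.g.\ inward parallel curves of an h-convex curve satisfy $\kappa_\varepsilon-1=\frac{(\kappa-1)(1-\tanh\varepsilon)}{1+\kappa\tanh\varepsilon}$, which still vanishes wherever $\kappa=1$). The paper produces the approximants by running the curve shortening flow $\partial_tX=-\kappa\nu$ for a short time: the evolution $\partial_t(\kappa-1)=(\kappa-1)_{ss}+(\kappa-1)\kappa(\kappa+1)$ together with the strong maximum principle makes $\gamma_t$ strictly h-convex instantly, and letting $t\to 0$ gives the inequality for $\gamma$. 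Second, and more seriously, the equality case for a merely h-convex $\gamma$: your sentence ``as such a circle is strictly h-convex, the equality characterisation survives the approximation step'' is not an argument --- rigidity does not pass through limits, because the approximating curves satisfy only the inequality, not equality, and your flow argument cannot even be started on $\gamma$ (the speed is singular where $\kappa=1$). The paper closes this with a separate variational argument: assuming equality, it considers the nonempty open set $\tilde{\gamma}=\{p\in\gamma:\kappa(p)>1\}$, tests the deficit functional with compactly supported normal variations $\partial_tX=-\varphi\nu$, $\varphi\in C_c^\infty(\tilde{\gamma})$, and uses that the deficit attains its minimum $0$ at $t=0$ to derive $2u-2\Phi\equiv\frac{L-A}{\pi}$ on $\tilde{\gamma}$; this shows $\tilde{\gamma}$ is also closed, hence $\tilde{\gamma}=\gamma$ by connectedness, so $\gamma$ is strictly h-convex and Step 1's rigidity applies. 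Without some argument of this kind your proof only establishes the inequality, not the stated equality characterisation.
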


We first use the flow \eqref{equ-flow} to prove inequality \eqref{equ-ineq} for strictly h-convex curves in $\mathbb{H}^2$. Then inequality \eqref{equ-ineq} for h-convex curves follows by an approximation argument following \cite{Guan2009TheQI} and \cite{twodim2022}.



\subsection{Stability results}
We finally consider the stability of inequalities (\ref{equ-ineq}) for curves, (\ref{equ-modifiedquer}) and (\ref{equ-inequality1}) for hypersurfaces characterized by the Hausdorff distance between the hypersurface and an appropriate geodesic sphere. Recall that
\begin{equation}
	\mathrm{dist}(K,L)=\inf\{\delta>0:K\subset B_{\delta}(L)\ \mbox{and}\ L\subset B_{\delta}(K)\}
\end{equation}
is the Hausdorff distance between two compact sets $K$ and $L$.

For $n=1$, we will use Theorem \ref{thm-existenceandconvergence} to prove the stability of inequality (\ref{equ-ineq}) for strictly h-convex curves.
\begin{theorem}
	\label{thm-stabilitycurve}
	Let $\gamma$ be a smooth strictly h-convex closed curve in $\mathbb{H}^2$ enclosing a domain $\Omega$. Regard $\gamma=\{(\rho(\theta),\theta):\theta\in\mathbb{S}^1\}$ as a graph on $\mathbb{S}^1$. There exists a constant $C(\rho,\max\limits_{\gamma}|\rho_{\theta}|,\min\limits_{\gamma}\kappa)$ and a geodesic circle $\gamma_{\mathbb{H}}$, such that
	\begin{equation}
		\mathrm{dist}(\gamma,\gamma_{\mathbb{H}}) \leqslant C f\left[ \int_{\gamma} (\Phi-u)(\kappa-1) \mathrm{d}s + (L[\gamma]-A[\gamma]) - \dfrac{1}{2\pi}(L[\gamma]-A[\gamma])^2 \right],  \label{equ-curvestability}
	\end{equation}
	where $f(s)=s^{\frac{1}{2}}+s^{\frac{1}{6}}\ (s\geqslant 0)$, and $\mathrm{dist}(\gamma,\gamma_{\mathbb{H}})$ denotes the Hausdorff distance between $\gamma$ and $\gamma_{\mathbb{H}}$.
\end{theorem}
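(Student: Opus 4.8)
The plan is to compare $\gamma$ with the limit circle of its own flow. Running \eqref{equ-flow} from $\gamma_0=\gamma$ produces $\gamma_\infty$, which I take as $\gamma_{\mathbb{H}}$. Two facts organise the argument. First, \eqref{equ-flow} preserves $\tilde{W}_1=L-A$: if $F$ denotes the normal speed, then $\frac{d}{dt}(L-A)=\int_{\gamma_t}(\kappa-1)F\,ds=\int_{\gamma_t}(\phi'-\kappa u)\,ds$, which vanishes by the Minkowski identity $\int_\gamma \phi'\,ds=\int_\gamma\kappa u\,ds$; hence $\gamma_\infty$ is the origin-centred circle of radius $r_\infty$ fixed by $2\pi(1-e^{-r_\infty})=L[\gamma]-A[\gamma]$. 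Second, writing
\begin{equation*}
	\delta(\gamma):=\int_\gamma(\Phi-u)(\kappa-1)\,ds+(L[\gamma]-A[\gamma])-\tfrac{1}{2\pi}(L[\gamma]-A[\gamma])^2
\end{equation*}
for the deficit in \eqref{equ-ineq}, Theorem \ref{thm-inequality2dim} and its flow proof give that $\delta\ge 0$ is monotone non-increasing along \eqref{equ-flow} and vanishes exactly on $\gamma_\infty$. Writing $\gamma=\{(\rho(\theta),\theta)\}$ and $w:=\rho-r_\infty$, the radial geodesics realise the distance between the two star-shaped curves, so $\mathrm{dist}(\gamma,\gamma_{\mathbb{H}})\le\|w\|_{L^\infty(\mathbb{S}^1)}$; everything reduces to bounding $\|w\|_{L^\infty}$ by $\delta(\gamma)$.

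The analytic core is a coercive lower bound for the deficit in terms of the oscillation $\tilde w:=w-\tfrac{1}{2\pi}\int_{\mathbb{S}^1}w\,d\theta$. Using $\delta(\gamma)=\int_0^\infty\bigl(-\tfrac{d}{dt}\delta\bigr)\,dt$, I would write the dissipation as a non-negative integral that vanishes precisely where the normal speed $F=\frac{\phi'-u}{\kappa-1}-u$ vanishes, that is, on origin-centred circles, and extract from it the estimate $\|\tilde w\|_{H^1(\mathbb{S}^1)}\le C\,\delta(\gamma)^{1/4}$. Strict h-convexity enters exactly here: the bound $\min_\gamma\kappa>1$ furnishes the positive lower bound on the weights appearing in the dissipation, which is the origin of the dependence of $C$ on $\min_\gamma\kappa$. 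This is the step I expect to be the main obstacle. Making the dissipation quadratic form explicit is already laborious, but the genuinely delicate point is the degeneracy that yields a fourth root rather than a square root: it reflects the weak penalisation of the non-radial modes by the origin-centred weights $\Phi$ and $u$, and it is this fourth root that ultimately produces the exponent $\tfrac16$.

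Granting the coercive estimate, the two terms of $f$ follow by elementary interpolation. For the oscillatory part, the a priori bound $\|w_\theta\|_{L^\infty}\le\max_\gamma|\rho_\theta|$ fed into the one-dimensional Gagliardo--Nirenberg inequality $\|\tilde w\|_{L^\infty}\le C\|w_\theta\|_{L^\infty}^{1/3}\|\tilde w\|_{L^2}^{2/3}$ gives $\|\tilde w\|_{L^\infty}\le C\,\delta(\gamma)^{1/6}$. For the mean, matching $\tilde{W}_1(\gamma)=\tilde{W}_1(\gamma_\infty)$ and expanding $\tilde{W}_1$ of the radial graph to second order about the constant radius $\bar\rho=r_\infty+\bar w$ yields $|\bar w|\le C\|\tilde w\|_{H^1}^2\le C\,\delta(\gamma)^{1/2}$, using $\frac{d}{dr}\tilde{W}_1(\overline{B}_r)=2\pi e^{-r}>0$. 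Adding the two,
\begin{equation*}
	\mathrm{dist}(\gamma,\gamma_{\mathbb{H}})\le\|w\|_{L^\infty}\le|\bar w|+\|\tilde w\|_{L^\infty}\le C\bigl(\delta(\gamma)^{1/2}+\delta(\gamma)^{1/6}\bigr),
\end{equation*}
which is the assertion with $f(s)=s^{1/2}+s^{1/6}$; the remaining dependence of $C$ on $\rho$ enters only through the geometric constants in the reduction to $\|w\|_{L^\infty}$ and in the second-order expansion of $\tilde{W}_1$.
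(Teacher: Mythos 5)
The decisive step of your argument --- the coercivity estimate $\|\tilde w\|_{H^1(\mathbb{S}^1)}\le C\,\delta(\gamma)^{1/4}$ for the \emph{initial} curve --- is asserted but never proved, and you flag it yourself as the main obstacle. The route you sketch for it cannot work as stated. The identity $\delta(\gamma)=\int_0^\infty\bigl(-\tfrac{d}{dt}\delta\bigr)\mathrm{d}t$, with dissipation $-\tfrac{d}{dt}\delta=2\int_{\gamma_t}(\kappa-1)\left|\partial_s\Phi\right|^2\mathrm{d}s$ (this is exactly \eqref{equ-phiueq0}), controls only a time integral; it gives no bound whatsoever on the instantaneous dissipation at $t=0$, which is what a bound on $\|\tilde w\|_{H^1}$ for $\gamma$ itself would require. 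The dissipation can be large at $t=0$ and decay quickly, so nothing about $\gamma$ can be ``extracted'' pointwise in time from the integral identity alone. Nor can you prove the estimate at some later time and transfer it back: Hausdorff closeness of $\gamma_t$ to $\gamma$ does not control $\rho_\theta$, hence not the $H^1$ norm. Note also that your mean-estimate step genuinely needs $H^1$ (the second-order expansion of $L-A$ involves $\tilde w_\theta$), so weakening the claim to an $L^2$ bound would not rescue the plan as written.

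What is missing is the bridge the paper uses: bounded speed plus a pigeonhole in time. Since $\bigl|\tfrac{\phi'-u}{\kappa-1}-u\bigr|\le C(\rho_0,\min_\gamma\kappa)$ along the flow, one has $\mathrm{dist}(\gamma,\gamma_t)\le Ct$; and since $\int_0^{\sqrt\delta}\int_{\gamma_t}|\partial_s\Phi|^2\,\mathrm{d}s\,\mathrm{d}t\le C\delta$, there is a time $t_\delta\in(0,\sqrt\delta]$ at which $\int_{\gamma_{t_\delta}}|\partial_s\Phi|^2\,\mathrm{d}s\le C\sqrt\delta$. All static analysis is then performed on $\gamma_{t_\delta}$, not on $\gamma$: the Poincar\'e inequality gives $\|\rho-a\|_{L^2(\mathbb{S}^1)}^2\le C\sqrt\delta$ with $a$ the mean radius of $\gamma_{t_\delta}$, your Gagliardo--Nirenberg interpolation (which is precisely the paper's contradiction argument, with the same exponents) upgrades this to $\|\rho-a\|_{L^\infty(\mathbb{S}^1)}\le C\delta^{1/6}$, and the triangle inequality $\mathrm{dist}(\gamma,\gamma_{\mathbb{H}})\le\mathrm{dist}(\gamma,\gamma_{t_\delta})+\mathrm{dist}(\gamma_{t_\delta},\gamma_{\mathbb{H}})\le C(\delta^{1/2}+\delta^{1/6})$ concludes. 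This also shows that your insistence on $\gamma_{\mathbb{H}}=\gamma_\infty$ is an unnecessary self-imposed burden: the theorem asks only for \emph{some} geodesic circle, and taking the origin-centred circle of radius $a$ removes the need for the $\tilde W_1$-matching step entirely. Your exponent bookkeeping (a $1/4$-coercivity producing $1/6$ and $1/2$) is consistent with the paper's, and your ingredients (the dissipation formula, the interpolation) are the right ones; the argument fails only, but genuinely, at the unproven coercivity claim, and the pigeonhole-in-time mechanism is the idea needed to close it.
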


The proof relies on the Poincar\'e inequality, which is used to characterize the distance between the curve and a geodesic circle.

In \cite{Sahjwani2023StabilityOT}, Sahjwani and Scheuer used flow (\ref{equ-bglsflow}) to prove the stability of quermassintegral inequalities (\ref{equ-quermass}) when $l=k-1$ for h-convex domains in $\mathbb{H}^{n+1}\ (n\geqslant 2)$. Their proof relied on the following nearly umbilical theorem.

\begin{theorem}[{\cite[Theorem 1.2]{DeRosa2018QuantitativeSF}}]
	\label{thm-stability}
	Let $n\geqslant 2$, $p\in(1,\infty)$. There exists $\varepsilon(n,p)>0$, if $\Sigma$ is a closed convex hypersurface in $\mathbb{R}^{n+1}$ satisfying
	\begin{equation}
		|\Sigma|=|\mathbb{S}^n|,\ \ \int_{\Sigma}|\mathring{A}|^p \mathrm{d}\mu\leqslant \varepsilon,
	\end{equation}
	then there exist a smooth parametrization $\varphi:\mathbb{S}^n\to\Sigma\subset\mathbb{R}^{n+1}$ and a point $\mathcal{P}\in\mathbb{R}^{n+1}$ such that the following estimate holds:
	\begin{equation}
		\|\varphi-\mathrm{id}-\mathcal{P}\|_{W^{2,p}(\mathbb{S}^n)} \leqslant C(n,p) \|\mathring{A}\|_{L^p(\Sigma)},
	\end{equation}
	where $\mathrm{id}:\mathbb{S}^n\to\mathbb{S}^n$ is the identity map, $\mathring{A}$ is the trace free Weingarten matrix of $\Sigma$.
\end{theorem}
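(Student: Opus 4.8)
The plan is to exploit convexity to linearize the problem through the Gauss map, reducing the geometric statement to a Korn-type (second-order Poincar\'e) inequality on the round sphere whose kernel is exactly the family of round spheres. Since $\Sigma$ is convex, its Gauss map $\nu:\Sigma\to\mathbb{S}^n$ is a homeomorphism, and I would take $\varphi:=\nu^{-1}:\mathbb{S}^n\to\Sigma$ as the candidate parametrization. A direct computation in this gauge gives $d\varphi=A^{-1}$ (the principal-radius operator), while the outer normal of $\Sigma$ at $\varphi(x)$ is the point $x=\mathrm{id}(x)$ itself; differentiating once more shows that the normal component of $\nabla^2_{\mathbb{S}^n}\varphi$ equals $-A^{-1}$. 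Hence the trace-free part of $\nabla^2_{\mathbb{S}^n}\varphi$ is, pointwise, $-\mathring{(A^{-1})}$ in its normal component, which is comparable to the trace-free Weingarten operator $\mathring A$ up to the factor $\kappa^{-2}$. Round spheres $x\mapsto\lambda x+\mathcal P$ are precisely the maps with $\mathring{\nabla^2_{\mathbb{S}^n}\varphi}=0$, i.e.\ those whose components are constants plus first spherical harmonics.

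The two analytic inputs are then: (i) a two-sided curvature bound $c\le\kappa_i\le C$ on $\Sigma$, and (ii) the Korn inequality
\[
\inf_{\lambda,\mathcal P}\ \|\varphi-\lambda\,\mathrm{id}-\mathcal P\|_{W^{2,p}(\mathbb{S}^n)}\ \le\ C(n,p)\,\|\mathring{\nabla^2_{\mathbb{S}^n}\varphi}\|_{L^p(\mathbb{S}^n)}.
\]
For (ii) I would first prove the case $p=2$ by expanding each component of $\varphi$ in spherical harmonics: the trace-free Hessian annihilates exactly the degrees $0$ and $1$ and has a spectral gap on the orthogonal complement, which yields coercivity modulo the kernel. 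The general exponent $p\in(1,\infty)$ follows because $\mathring{\nabla^2}$ is an overdetermined elliptic operator with constant symbol on $\mathbb{S}^n$; a pseudodifferential parametrix together with Calder\'on--Zygmund theory (the Agmon--Douglis--Nirenberg estimates) upgrades the $L^2$ bound to $L^p$. Combining (i) with the pointwise identity above then converts $\|\mathring{\nabla^2_{\mathbb{S}^n}\varphi}\|_{L^p(\mathbb{S}^n)}$ into $\|\mathring A\|_{L^p(\Sigma)}$, the Jacobian $\det A$ of the Gauss map being bounded above and below by (i).

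It remains to pin the dilation factor $\lambda$ to $1$. The normalization $|\Sigma|=|\mathbb{S}^n|$, together with the $W^{2,p}\hookrightarrow C^1$ proximity of $\varphi$ to $\lambda\,\mathrm{id}+\mathcal P$, forces $|\lambda-1|\le C\|\mathring A\|_{L^p}$; absorbing this into the error replaces $\lambda\,\mathrm{id}$ by $\mathrm{id}$ and gives the asserted bound $\|\varphi-\mathrm{id}-\mathcal P\|_{W^{2,p}(\mathbb{S}^n)}\le C(n,p)\|\mathring A\|_{L^p(\Sigma)}$. Choosing $\varepsilon(n,p)$ smaller than all smallness thresholds used in (i) and in the absorption step then closes the argument, while smoothness of $\varphi$ follows from smoothness and the (now strict) convexity of $\Sigma$.

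The main obstacle is step (i), the uniform two-sided curvature bound. Convexity alone gives only $\kappa_i\ge 0$, and $L^p$-smallness of $\mathring A$ permits $\mathring A$ to concentrate on small sets, so pointwise pinching is not immediate. The route I would pursue is to first use the support function $h$ (with $\nabla^2_{\mathbb{S}^n}h+h\,g$ equal to the principal-radius matrix, whose trace-free part is the trace-free Hessian of $h$) and the $L^2$ coercivity above to trap $\Sigma$ between two concentric spheres of nearly equal radii, producing inradius/circumradius and hence diameter control; convexity then converts this global pinching into the uniform curvature bounds needed to compare the measures and to legitimize the radius--curvature linearization. This is precisely where global convexity is indispensable and where the genuine difficulty of the nearly-umbilical problem resides.
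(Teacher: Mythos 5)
Two things at once: note first that the paper does not prove this statement at all --- it is quoted verbatim from \cite[Theorem 1.2]{DeRosa2018QuantitativeSF} and used as a black box in the proof of Theorem \ref{thm-stabilityhypersurface} --- so your attempt can only be measured against the original proof of De Rosa--Gioffr\`e, and measured that way it has two genuine gaps. The first is the analytic reduction. In the Gauss-map gauge one has $\varphi(x)=h(x)\,x+\nabla h(x)$ with $h$ the support function, and your pointwise identity is correct only for the \emph{normal} component of $\nabla^2_{\mathbb{S}^n}\varphi$, which is $-A^{-1}=-(\nabla^2 h+h g)$. The tangential components of $\nabla^2_{\mathbb{S}^n}\varphi$ contain $\nabla(A^{-1})$, i.e.\ third derivatives of $h$ (first derivatives of curvature), and these are not controlled pointwise, or in $L^p$, by $\mathring{A}$. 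So the Korn inequality, applied with the data you actually control (namely $\mathring{\nabla^2}h$ in $L^p$), yields a $W^{2,p}$ bound for $h$ modulo its kernel --- one derivative short of the asserted $W^{2,p}$ bound on $\varphi$. Moreover, converting $\|\mathring{(A^{-1})}\|_{L^p(\mathbb{S}^n)}$ into $\|\mathring{A}\|_{L^p(\Sigma)}$ costs the factors $(\kappa_i\kappa_j)^{-1}$ and the Jacobian $\det A$ of the Gauss map, so this transfer already presupposes the two-sided curvature bounds of your step (i); since you propose to derive those bounds \emph{from} the coercivity estimate, the argument is circular. (Note also that $\nu$ is a homeomorphism only under strict convexity, which is not assumed and cannot be bootstrapped here: convexity gives only $\kappa_i\geqslant 0$.)

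The second gap is that step (i) is not merely difficult but false as you state it: trapping between concentric spheres of nearly equal radii plus convexity does \emph{not} imply $c\leqslant\kappa_i\leqslant C$. Cut a small cap off a ball and smooth the resulting edge at a tiny scale: the body is convex and Hausdorff-close to the ball, yet it carries a nearly flat facet (where $\kappa_i\approx 0$, and where incidentally $\mathring{A}\approx 0$ as well) and an edge region of arbitrarily large curvature. Convexity upgrades Hausdorff pinching to $C^1$ (gradient) pinching of the radial graph, never to $C^2$ pinching --- and this is exactly how the cited proof actually runs: De Rosa--Gioffr\`e use convexity, together with a compactness/selection argument for which only a non-quantitative smallness threshold $\varepsilon(n,p)$ is needed, to write $\Sigma$ as a radial graph over a sphere with small $C^1$ norm, and then invoke the nearly umbilical estimate for such graphs, where second derivatives of the parametrization are reached through the Gauss formula (the Hessian of the immersion is expressed through $A$ alone, with no derivatives of curvature) and singular-integral estimates; no pointwise curvature bounds are ever established or needed. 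So your proposal correctly isolates the crux but resolves it by an implication that fails, and in addition the transfer $\|\mathring{\nabla^2_{\mathbb{S}^n}}\varphi\|_{L^p}\lesssim\|\mathring{A}\|_{L^p}$ on which the whole scheme rests does not hold in the gauge you chose.
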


For $n\geqslant 2$, inspired by the work \cite{Sahjwani2023StabilityOT}, we will use the flow (\ref{equ-flowhy}) with the convergence result (Theorem \ref{thm-higherdimension}) and the nearly umbilical result (Theorem \ref{thm-stability}) to discuss the stability of inequalities (\ref{equ-modifiedquer}) and (\ref{equ-inequality1}) for strictly h-convex hypersurfaces.

\begin{theorem}
	\label{thm-stabilityhypersurface}
	Let $M$ be a smooth strictly h-convex closed hypersurface in $\mathbb{H}^{n+1}\ (n\geqslant 2)$ enclosing a domain $\Omega$. For any $1\leqslant k\leqslant n-1$, there exist a constant $C=C(\rho_{-}(\Omega),\mathrm{dist}(\tilde{\kappa},\partial\Gamma_k^+),M,n,k)$ and a geodesic sphere $S_{\mathbb{H}}$, such that
	\begin{equation}
		\mathrm{dist}(\partial\Omega,S_{\mathbb{H}}) \leqslant C f\left[ \tilde{W}_{k+1}(\Omega) - \tilde{f}_{k+1}\circ \tilde{f}_{k}^{-1}\left( \tilde{W}_{k}(\Omega) \right) \right] \label{equ-stabi2}
	\end{equation}
	and
	\begin{equation}
		\mathrm{dist}(\partial\Omega,S_{\mathbb{H}}) \leqslant C f\left[ \int_{M} (\Phi-u) E_k(\tilde{\kappa}) \mathrm{d}\mu + (n-2k)\tilde{W}_k(\Omega) - \tilde{h}_k\circ \tilde{f}_k^{-1}\left( \tilde{W}_k(\Omega) \right) \right] \label{equ-stabi}
	\end{equation}
	hold, where $f(s)=s^{\frac{1}{2}}+s^{\frac{1}{4}}\ (s\geqslant 0)$, $\mathrm{dist}(\partial\Omega,S_{\mathbb{H}})$ is the Hausdorff distance between $\partial\Omega$ and $S_{\mathbb{H}}$.
\end{theorem}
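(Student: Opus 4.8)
The plan is to reduce the Hausdorff stability in both \eqref{equ-stabi2} and \eqref{equ-stabi} to the quantitative nearly umbilical theorem (Theorem \ref{thm-stability}) by showing that each deficit on the right-hand side dominates the $L^2$-norm of the trace-free second fundamental form of $\partial\Omega$. A useful preliminary observation is that, since $\tilde{\kappa}_i=\kappa_i-1$ differs from $\kappa_i$ only by a constant shift of all eigenvalues, the trace-free Weingarten operator is unchanged, $\mathring{\tilde{A}}=\mathring{A}$; hence umbilicity of $\partial\Omega$ is equivalent to the vanishing of $\mathring{A}$, and it suffices throughout to estimate $\|\mathring{A}\|_{L^2(\partial\Omega)}$ by the deficit and then convert smallness of this quantity into Hausdorff closeness.

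The key step is the curvature estimate
\[
	\mathcal{D}\ \geqslant\ c\int_{\partial\Omega}|\mathring{A}|^2\,\mathrm{d}\mu,
\]
where $\mathcal{D}$ denotes either deficit and $c$ depends only on the listed quantities. I would obtain it from the flow \eqref{equ-flowhy}. Running the flow with initial data $M_0=\partial\Omega$ and invoking the convergence result of Theorem \ref{thm-higherdimension}, the monotone quantity underlying \eqref{equ-modifiedquer} (resp.\ \eqref{equ-inequality1}) changes by exactly $\mathcal{D}$ between $M_0$ and the limiting sphere, because $\tilde{W}_k$ is preserved along \eqref{equ-flowhy} so that the composed function evaluated at the (fixed) value $\tilde{W}_k(\Omega)$ equals the quantity's value on the limit sphere. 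Differentiating this quantity along the flow and applying the Newton--MacLaurin inequalities, its rate of change is bounded below by $c\int_{M_t}|\mathring{A}|^2\,\mathrm{d}\mu$, the definite constant $c$ coming from the strict h-convexity together with the cone distance $\mathrm{dist}(\tilde{\kappa},\partial\Gamma_k^+)$, which keeps $E_k(\tilde{\kappa})$ and its derivatives away from degeneracy. Integrating in time yields $\mathcal{D}\geqslant c\int_0^\infty\!\!\int_{M_t}|\mathring{A}|^2\,\mathrm{d}\mu\,\mathrm{d}t$; to pass to the value at $t=0$, I would use uniform a priori estimates along the flow to produce a uniform time $\tau>0$ on which $\int_{M_t}|\mathring{A}|^2\,\mathrm{d}\mu\geqslant \tfrac12\int_{M_0}|\mathring{A}|^2\,\mathrm{d}\mu$, giving the displayed bound.

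With the $L^2$-bound in hand, I would transfer to the Euclidean setting in order to apply Theorem \ref{thm-stability} with $p=2$. Placing $\partial\Omega$ in the Poincar\'e ball model, strict h-convexity makes its image a convex hypersurface and geodesic spheres correspond to Euclidean spheres; since the trace-free second fundamental form is conformally covariant and the conformal factor is two-sidedly bounded on the relevant region in terms of $\rho_-(\Omega)$ and an a priori outer radius, smallness of $\|\mathring{A}\|_{L^2}$ transfers to the Euclidean trace-free second fundamental form with controlled constants, and the two Hausdorff distances are comparable there. After normalizing the enclosed area, Theorem \ref{thm-stability} furnishes a parametrization with $\|\varphi-\mathrm{id}-\mathcal{P}\|_{W^{2,2}}\leqslant C\|\mathring{A}\|_{L^2}\leqslant C\,\mathcal{D}^{1/2}$. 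Interpolating the $C^0$-norm of $\varphi-\mathrm{id}-\mathcal{P}$ between this $W^{2,2}$-bound and the a priori higher-order estimates on $\partial\Omega$ gives a Hausdorff bound of order $\mathcal{D}^{1/4}$ in the small-deficit regime, while an a priori diameter bound gives the order $\mathcal{D}^{1/2}$ bound when the deficit is large; together these produce the rate $f(s)=s^{1/2}+s^{1/4}$ uniformly in $n$, with $S_{\mathbb{H}}$ the geodesic sphere corresponding to the Euclidean sphere produced by Theorem \ref{thm-stability}.

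The main obstacle is the curvature estimate of the second paragraph with constants depending only on the stated data: both the definite Newton--MacLaurin gap, which requires uniform control via $\mathrm{dist}(\tilde{\kappa},\partial\Gamma_k^+)$ and h-convexity, and, more delicately, the passage from the time-integrated umbilicity defect to its value at $t=0$, which rests on uniform a priori estimates for the flow \eqref{equ-flowhy}. A secondary difficulty is making the hyperbolic-to-Euclidean transfer quantitative, so that the constants produced by applying Theorem \ref{thm-stability} in the ball model depend only on $\rho_-(\Omega)$, $n$, $k$ and the geometry of $M$, as asserted.
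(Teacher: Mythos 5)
Your overall architecture is the same as the paper's: run the flow \eqref{equ-flowhy}, use preservation of $\tilde W_k$ to identify the deficit with the total change of the monotone quantity, lower-bound the dissipation by a Newton--MacLaurin gap and convert it pointwise into $|\mathring A|^2$, then transfer to the Poincar\'e ball model and invoke Theorem \ref{thm-stability} with $p=2$. However, your key displayed estimate $\mathcal{D}\geqslant c\int_{\partial\Omega}|\mathring A|^2\,\mathrm{d}\mu$ contains a genuine gap, precisely at the step where you pass from the time-integrated bound $\mathcal{D}\geqslant c\int_0^\infty\int_{M_t}|\mathring A|^2\,\mathrm{d}\mu\,\mathrm{d}t$ back to the initial hypersurface. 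You claim a \emph{uniform} time $\tau>0$ on which $y(t):=\int_{M_t}|\mathring A|^2\,\mathrm{d}\mu\geqslant\tfrac12 y(0)$. The uniform a priori estimates along the flow only give $|y'(t)|\leqslant K$ (or at best $|y'|\leqslant K\sqrt{y}$); they do \emph{not} give $|y'|\leqslant Ky$, which is what a deficit-independent $\tau$ would require (that would be a \L ojasiewicz-type inequality, not a consequence of $C^k$ bounds, since $\partial_t\mathring A$ need not vanish where $\mathring A$ does). From $|y'|\leqslant K$ one only gets $y(t)\geqslant\tfrac12 y(0)$ for $t\leqslant y(0)/2K$, a time interval that shrinks exactly in the interesting regime $y(0)\to0$; integrating then yields only the quadratic bound $\mathcal{D}\geqslant c\,y(0)^2$, not the linear one you display. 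Your subsequent rate computation ($\|\mathring A\|_{L^2}\leqslant C\mathcal{D}^{1/2}$, then interpolation down to $\mathcal{D}^{1/4}$) is built on the unproven linear bound.

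The paper avoids this issue entirely by never returning to $t=0$: from $\int_0^{\sqrt{\varepsilon}}\int_{M_t}|\mathring A|^2\,\mathrm{d}\mu_t\,\mathrm{d}t\leqslant C\varepsilon$ it extracts, by the mean value theorem in time, a slice $t_\varepsilon\in(0,\sqrt{\varepsilon}\,]$ with $\int_{M_{t_\varepsilon}}|\mathring A|^2\,\mathrm{d}\mu_{t_\varepsilon}\leqslant C\sqrt{\varepsilon}$, applies the nearly umbilical theorem to $M_{t_\varepsilon}$ (giving $\mathrm{dist}(M_{t_\varepsilon},S_{\mathbb H})\leqslant C\varepsilon^{1/4}$), and separately bounds $\mathrm{dist}(\partial\Omega,M_{t_\varepsilon})\leqslant Ct_\varepsilon\leqslant C\sqrt{\varepsilon}$ using the uniform bound on the flow speed $\bigl|\tfrac{(\phi'-u)E_{k-1}(\tilde\kappa)}{E_k(\tilde\kappa)}-u\bigr|$; the triangle inequality then produces exactly the two terms $\varepsilon^{1/2}+\varepsilon^{1/4}$ in $f$. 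If you want to keep your structure of estimating $\partial\Omega$ directly, you could salvage it with the quadratic bound ($\|\mathring A\|_{L^2(\partial\Omega)}\leqslant C\mathcal{D}^{1/4}$, then Theorem \ref{thm-stability}), but you should either adopt the paper's slicing argument or rewrite your second paragraph around the quadratic estimate; as stated, the linear estimate and the ``uniform $\tau$'' justification are incorrect.
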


The paper is organized as follows. In Section \ref{section-2}, we will first gather some useful properties of elementary symmetric functions, quermassintegrals and modified quermassintegrals. Then we summarize important formulas for hypersurfaces in hyperbolic space and general evolution equations of curvature flows. In Section \ref{section-3}, we will prove Theorem \ref{thm-existenceandconvergence}, the long time existence and convergence results for the curve flow (\ref{equ-flow}). In Section \ref{section-4}, we will prove geometric inequalities Theorem \ref{thm-inequality2dim} and include the complete proof of Theorem \ref{thm-inequalityhighdim}. In Section \ref{section-5}, we will prove stability results Theorems \ref{thm-stabilitycurve} and \ref{thm-stabilityhypersurface}.

\begin{ack}
	The authors would like to thank Professor Yong Wei for his helpful discussions and support. The authors also appreciate Dr. Botong Xu for his advice on improving this work. This work is supported by National Key Research and Development Program of China 2021YFA1001800.
\end{ack}

\section{Preliminaries}
\label{section-2}
In this section, we will make some preparations for our proof, including properties of elementary symmetric functions, definitions for quermassintegrals and modified quermassintegrals, some formulas for hypersurfaces in hyperbolic space and general evolution equations for curvature flows.

\subsection{Elementary symmetric functions}
Here we review some properties of elementary symmetric functions. For integer $k=1,2,\cdots,n$, for any point $\kappa=(\kappa_1,\kappa_2,\cdots,\kappa_n)\in\mathbb{R}^n$, the normalized $k$th elementary symmetric polynomial $E_k(\kappa)$ is defined by
\begin{equation}
	E_k(\kappa) = {n\choose k}^{-1}\sum\limits_{1\leqslant i_1<i_2<\cdots<i_k\leqslant n} \kappa_{i_1}\kappa_{i_2}\cdots\kappa_{i_{k}}
\end{equation}
and has the convention that $E_0=1$ and $E_k(\kappa)=0$ for $k>n$. The definition can be extended to symmetric matrices. Let $A=(a_{ij})$ be a $n\times n$ symmetric matrix, denote $\kappa=\kappa(A)$ as the eigenvalue vector of $A$. Define $E_k(A)=E_k(\kappa(A))$, and we have
\begin{equation}
	E_k(A)=\dfrac{(n-m)!}{n!}\delta_{i_1i_2\cdots i_k}^{j_1j_2\cdots j_k}a_{i_1j_1}a_{i_2j_2}\cdots a_{i_kj_k},\ \ k=1,2,\cdots,n.
\end{equation}

We include some useful properties here but omit their proofs. For more details, we may refer to \cite[Section 2]{Guan2014}.
\begin{lemma}\label{lem-elementarysymmetric}
	Denote $\dot{E}^{ij}_k=\dfrac{\partial E_k(A)}{\partial a_{ij}}$, then we have the following identities.
	\begin{eqnarray}
		&&\sum\limits_{i,j}\dot{E}^{ij}_k\delta^j_i = k E_{k-1}(A),\notag\\
		&&\sum\limits_{i,j}\dot{E}^{ij}_k a_{ij} = k E_{k}(A),\notag\\
		&&\sum\limits_{i,j}\dot{E}^{ij}_k(A^2)_{ij} = n E_1(A)E_k(A) - (n-k)E_{k+1}(A).
	\end{eqnarray}
\end{lemma}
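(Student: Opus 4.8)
\textbf{Proof proposal for Lemma \ref{lem-elementarysymmetric}.}

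The plan is to prove the three identities directly from the explicit formula
$E_k(A)=\frac{(n-k)!}{n!}\,\delta^{j_1\cdots j_k}_{i_1\cdots i_k}a_{i_1j_1}\cdots a_{i_kj_k}$, where $\delta^{j_1\cdots j_k}_{i_1\cdots i_k}$ is the generalized Kronecker delta (the determinant of the $k\times k$ matrix of ordinary deltas) and repeated indices are summed. First I would differentiate this formula in $a_{ij}$. Since the expression is symmetric in the $k$ pairs $(i_\ell,j_\ell)$ and linear in each $a_{i_\ell j_\ell}$, differentiating produces a factor of $k$ and strips off one matrix entry, giving
\begin{equation}
\dot E^{ij}_k=\frac{(n-k)!}{n!}\,k\,\delta^{j\,j_2\cdots j_k}_{i\,i_2\cdots i_k}a_{i_2j_2}\cdots a_{i_kj_k}.\notag
\end{equation}
This formula is the engine for all three identities; each one is obtained by contracting $\dot E^{ij}_k$ against a suitable tensor and then recognizing the result as a multiple of some $E_m(A)$.

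For the first identity I would contract with $\delta^j_i$, i.e. set $j=i$ and sum. The outer deltas then combine, $\delta^{i\,j_2\cdots j_k}_{i\,i_2\cdots i_k}$ summed over $i$, which by the standard contraction rule for generalized Kronecker deltas equals $(n-k+1)\,\delta^{j_2\cdots j_k}_{i_2\cdots i_k}$. Substituting and comparing with the defining formula for $E_{k-1}(A)$ (which carries the prefactor $\frac{(n-k+1)!}{n!}$) yields $\sum_{i,j}\dot E^{ij}_k\delta^j_i=kE_{k-1}(A)$ after the factorials and the factor $(n-k+1)$ reorganize correctly. For the second identity I would contract $\dot E^{ij}_k$ against $a_{ij}$; this simply reassembles the original $k$-fold product in $E_k$, so by Euler's relation for the homogeneous-of-degree-$k$ polynomial $E_k$ one gets $\sum_{i,j}\dot E^{ij}_k a_{ij}=kE_k(A)$. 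I would present this either as a direct index computation or, more cleanly, by invoking homogeneity.

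The third identity is the most involved and will be the main obstacle. Here I would contract $\dot E^{ij}_k$ with $(A^2)_{ij}=\sum_m a_{im}a_{mj}$, which inserts an \emph{extra} matrix entry into the product; the outer generalized delta now has one more independent upper/lower index in play, and the contraction no longer collapses to a single $E_m$. The cleanest route is to diagonalize: since both sides are $O(n)$-invariant and polynomial in the entries of $A$, it suffices to verify the identity when $A=\mathrm{diag}(\kappa_1,\dots,\kappa_n)$, where $\dot E^{ij}_k$ is diagonal with $\dot E^{ii}_k=\frac{1}{n}\frac{\partial E_k}{\partial\kappa_i}=\binom{n}{k}^{-1}E_{k-1}(\kappa\,|\,i)$ (the symmetric function of the variables omitting $\kappa_i$), and $(A^2)_{ii}=\kappa_i^2$. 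The claim then reduces to the scalar identity $\sum_i \kappa_i^2\,\partial E_k/\partial\kappa_i = nE_1E_k-(n-k)E_{k+1}$ (up to normalization), which follows from the two classical Newton-type recursions $\sum_i\kappa_i\,\partial E_k/\partial\kappa_i=kE_k$ and $\sum_i\partial E_k/\partial\kappa_i=(n-k+1)E_{k-1}$ together with the Newton identity relating $E_1E_k$, $E_{k+1}$ and the power-sum contraction. I expect the bookkeeping of the combinatorial constants $\binom{n}{k}$ and the factorial prefactors to be the fiddly part; reducing to the diagonal case and using the well-known one-variable recursions is what keeps it manageable, and since the lemma is standard I would either cite \cite{Guan2014} or relegate these manipulations to a short verification.
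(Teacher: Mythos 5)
Your proposal cannot be compared against an in-paper argument, because the paper deliberately omits the proof of this lemma: it states the identities and refers the reader to \cite[Section 2]{Guan2014} (these are classical facts about elementary symmetric polynomials). Judged on its own, your outline is correct and is the standard proof. The delta-contraction rule $\sum_i\delta^{i\,j_2\cdots j_k}_{i\,i_2\cdots i_k}=(n-k+1)\,\delta^{j_2\cdots j_k}_{i_2\cdots i_k}$ combines with the prefactors $\tfrac{(n-k)!}{n!}$ and $\tfrac{(n-k+1)!}{n!}$ (note the paper's display has a typo, $(n-m)!$ for $(n-k)!$, which you silently and correctly repaired) to give exactly $kE_{k-1}(A)$; the second identity is indeed immediate from Euler's relation; and the scalar identity to which you reduce the third claim, $\sum_i\kappa_i^2\,\partial E_k/\partial\kappa_i=nE_1E_k-(n-k)E_{k+1}$, holds exactly as stated, with no residual normalization: writing $\sigma_k=\binom{n}{k}E_k$, the splitting $\kappa_i\sigma_{k-1}(\kappa|i)=\sigma_k-\sigma_k(\kappa|i)$ and $\sum_i\kappa_i\sigma_k(\kappa|i)=(k+1)\sigma_{k+1}$ give $\sum_i\kappa_i^2\sigma_{k-1}(\kappa|i)=\sigma_1\sigma_k-(k+1)\sigma_{k+1}$, and the ratio $\binom{n}{k+1}/\binom{n}{k}=(n-k)/(k+1)$ produces the coefficient $n-k$.

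Two corrections to the details. First, at a diagonal matrix one has $\dot E^{ii}_k=\partial E_k/\partial\kappa_i$ with \emph{no} factor $\tfrac1n$; your formula $\dot E^{ii}_k=\tfrac1n\,\partial E_k/\partial\kappa_i$ is inconsistent with the (correct) scalar identity you then invoke, and your identification $\dot E^{ii}_k=\binom{n}{k}^{-1}E_{k-1}(\kappa\,|\,i)$ is right only if $E_{k-1}(\kappa\,|\,i)$ denotes the \emph{unnormalized} polynomial $\sigma_{k-1}(\kappa\,|\,i)$ in the remaining $n-1$ variables; as written the constants do not match. Second, the reduction to diagonal $A$ in the third identity should record the equivariance $\dot E_k(OAO^{T})=O\,\dot E_k(A)\,O^{T}$, which follows from the chain rule applied to $E_k(OAO^{T})=E_k(A)$; this is what makes the contraction $\sum_{i,j}\dot E^{ij}_k(A^2)_{ij}=\operatorname{tr}\bigl(\dot E_k(A)A^2\bigr)$ a conjugation-invariant polynomial, so that orthogonal diagonalizability of symmetric $A$ legitimately completes the argument. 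With these repairs your proof is complete; alternatively, all three identities follow uniformly by the diagonalization argument, avoiding the generalized-delta bookkeeping entirely.
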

For each $k=1,2,\cdots,n$, let
$$\Gamma_k^{+}=\{x\in\mathbb{R}^n:E_i(x)>0,\ i=1,2,\cdots,k\}$$
be the Garding cone.
\begin{lemma}[Newton-MacLaurin inequality]
	If $\kappa\in\Gamma_k^+$, we have the following inequality
	\begin{equation}
		E_{k+1}(\kappa)E_{l-1}(\kappa) \leqslant E_l(\kappa)E_k(\kappa),\ \ 1\leqslant l\leqslant k.  \label{equ-MewtonMacL}
	\end{equation}
    Equality holds if and only if $\kappa_1=\kappa_2=\cdots=\kappa_n$.
\end{lemma}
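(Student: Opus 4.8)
The plan is to derive the general inequality from the single-index Newton inequality together with a telescoping identity. Note first that the case $l=k$ of \eqref{equ-MewtonMacL} is exactly Newton's inequality $E_{k-1}(\kappa)E_{k+1}(\kappa)\le E_k(\kappa)^2$. Granting Newton's inequality $E_{j-1}E_{j+1}\le E_j^2$ for every index $j$, I would record the algebraic identity
\[
\frac{E_{k+1}(\kappa)\,E_{l-1}(\kappa)}{E_k(\kappa)\,E_l(\kappa)}=\prod_{j=l}^{k}\frac{E_{j-1}(\kappa)\,E_{j+1}(\kappa)}{E_j(\kappa)^2},
\]
which is valid as soon as $E_l,\dots,E_k\neq 0$; the hypothesis $\kappa\in\Gamma_k^+$ supplies $E_0=1,E_1,\dots,E_k>0$, so no denominator vanishes. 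If $E_{k+1}(\kappa)\le 0$ the desired inequality is immediate because its left-hand side is then nonpositive; otherwise $E_{l-1},\dots,E_{k+1}$ are all positive, each factor on the right lies in $(0,1]$ by Newton's inequality, and hence the product does not exceed $1$. Multiplying through by $E_k E_l>0$ yields \eqref{equ-MewtonMacL}.

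To prove Newton's inequality itself I would exploit the real-rootedness of the characteristic polynomial
\[
P(t)=\prod_{i=1}^n(t-\kappa_i)=\sum_{i=0}^n(-1)^i\binom{n}{i}E_i(\kappa)\,t^{n-i}.
\]
Differentiation preserves real-rootedness (Rolle's theorem), and the reversal $Q\mapsto t^{\deg Q}Q(1/t)$ keeps all roots real (they become reciprocals) while reversing the order of the coefficients. To isolate three consecutive coefficients $E_{j-1},E_j,E_{j+1}$ I would differentiate $P$ exactly $n-j-1$ times (killing the terms of index $>j+1$), reverse the resulting degree-$(j+1)$ polynomial, and differentiate $j-1$ further times, arriving at a quadratic whose coefficients are proportional to $E_{j-1},E_j,E_{j+1}$. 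Since this quadratic still has only real roots, its discriminant is nonnegative, and unwinding the binomial normalizations turns this into $E_{j-1}E_{j+1}\le E_j^2$. This step is purely algebraic and holds for all real $\kappa$, not only in the cone.

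The equality analysis is the step I expect to be the main obstacle. The implication $\kappa_1=\cdots=\kappa_n\Rightarrow$ equality is immediate by substitution. For the converse, the product identity shows that equality in \eqref{equ-MewtonMacL} forces $E_{j-1}E_{j+1}=E_j^2$ at every $j$ in $[l,k]$, so in particular the reduced quadratic above has a double root. The quantitative refinement of Rolle that I would establish is that a real-rooted polynomial of degree at least three with two distinct roots has a derivative that again has two distinct roots; iterating this through the differentiation and reversal steps shows that a double root of the final quadratic can only arise when $P$ has a single root of full multiplicity, i.e.\ when all the $\kappa_i$ coincide. The delicate points are to verify that the reversal introduces no spurious root at the origin---this is guaranteed because in the equality case $\kappa\in\Gamma_k^+$ forces $E_{j+1}>0$---and to track the multiplicities carefully so that ``two distinct roots'' genuinely propagates all the way down to the quadratic.
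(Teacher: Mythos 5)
The paper does not actually prove this lemma: it is stated among the preliminaries whose proofs are explicitly omitted, with a citation to Guan's lecture notes (\cite[Section 2]{Guan2014}). So there is no in-paper argument to compare against; your proposal stands on its own, and it is correct --- indeed it reconstructs the classical proof found in such references. The telescoping identity is right (the product over $j\in[l,k]$ collapses to $E_{k+1}E_{l-1}/(E_kE_l)$), the case split on the sign of $E_{k+1}$ correctly uses $E_{l-1}>0$ (valid since $l\geqslant 1$ and $E_0=1$), and the derivation of the single-index Newton inequality via repeated differentiation, coefficient reversal, and the discriminant of the resulting quadratic is the standard argument; with the normalized $E_i$ the discriminant works out to exactly $(n!)^2\left(E_j^2-E_{j-1}E_{j+1}\right)$, so no extra constants survive. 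Your treatment of the equality case is also sound: equality forces $E_{k+1}>0$ (since $E_kE_l>0$ and $E_{l-1}>0$), hence every telescoping factor equals $1$, and your quantitative Rolle statement (a real-rooted polynomial of degree $\geqslant 3$ with two distinct roots has a derivative with two distinct roots) propagates correctly through the differentiations, while $E_{j+1}>0$ ensures the reversal step is degree-preserving with reciprocal roots. Two small points worth tightening in a write-up: first, in the inequality (not equality) part of the Newton step, if $E_{j+1}=0$ the reversal drops degree and the final polynomial is linear rather than quadratic, but then $E_{j-1}E_{j+1}=0\leqslant E_j^2$ is trivial, so this degenerate case should simply be dispatched separately; second, your phrasing that ``$\kappa\in\Gamma_k^+$ forces $E_{j+1}>0$'' in the equality case should make explicit that for $j=k$ this positivity comes from the equality assumption itself (via $E_{k+1}E_{l-1}=E_kE_l>0$), not from the cone condition alone.
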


The following divergence free property will be used frequently in integration by parts.
\begin{lemma}
	\label{lem-div}
	If $A=(a_{ij})$ is a Codazzi tensor on Riemannian manifold $(M^n,g)$, let $\{e_1,e_2,\cdots,e_n\}$ be a local frame, then
	\begin{equation}
		\sum\limits_{j}\nabla_j\dot{E}_k^{ij}(A) = 0. \label{equ-divergencefree}
	\end{equation}
\end{lemma}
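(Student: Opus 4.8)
The plan is to reduce the identity to a clash between the total antisymmetry of the generalized Kronecker delta and the total symmetry of $\nabla A$ forced by the Codazzi condition. First I would rewrite the Newton operator in terms of the Kronecker delta appearing in the excerpt. Differentiating the expression $E_k(A)=\frac{(n-k)!}{n!}\delta^{j_1\cdots j_k}_{i_1\cdots i_k}a_{i_1j_1}\cdots a_{i_kj_k}$ with respect to $a_{ij}$, and noting that all $k$ factors contribute equally after relabeling the summation indices (here the symmetry of $A$ and the antisymmetry of $\delta$ are used), gives
\[
\dot E_k^{ij}(A)=k\,\frac{(n-k)!}{n!}\,\delta^{j\,j_2\cdots j_k}_{i\,i_2\cdots i_k}\,a_{i_2j_2}\cdots a_{i_kj_k}.
\]

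Next I would take the covariant derivative $\nabla_j$ and sum over $j$. Since the metric is parallel and the generalized Kronecker delta is $\nabla$-parallel (constant components in an orthonormal frame), only the $a$ factors are differentiated; by the product rule and the same relabeling symmetry the $k-1$ resulting terms coincide, yielding
\[
\sum_j\nabla_j\dot E_k^{ij}(A)=k(k-1)\,\frac{(n-k)!}{n!}\,\delta^{j\,j_2\cdots j_k}_{i\,i_2\cdots i_k}\,(\nabla_j a_{i_2j_2})\,a_{i_3j_3}\cdots a_{i_kj_k}.
\]

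The key step is to observe that the contraction in the indices $j$ and $j_2$ pairs a symmetric object against an antisymmetric one. On one hand, $\delta^{j\,j_2\cdots j_k}_{i\,i_2\cdots i_k}$ changes sign under the exchange of the upper indices $j\leftrightarrow j_2$. On the other hand, writing the Codazzi equation as $\nabla_j a_{i_2 j_2}=\nabla_{i_2}a_{j j_2}$ and combining it with the symmetry $a_{ab}=a_{ba}$ shows that $\nabla_j a_{i_2 j_2}$ is totally symmetric in the three indices $j,i_2,j_2$; in particular $\nabla_j a_{i_2 j_2}=\nabla_{j_2}a_{i_2 j}$, so it is symmetric under $j\leftrightarrow j_2$. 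Contracting a tensor symmetric in $(j,j_2)$ with one antisymmetric in $(j,j_2)$ yields zero, which is exactly \eqref{equ-divergencefree}.

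There is no real analytic obstacle here; the entire content is combinatorial bookkeeping. The one point demanding care is deriving the total symmetry of $\nabla A$ from the Codazzi identity together with the symmetry of $A$ — placing the derivative index and the two tensor indices in the correct slots — and correctly tracking the multiplicity factors ($k$, then $k-1$) when differentiating $E_k$. Once these are pinned down, the antisymmetric–symmetric cancellation closes the argument immediately and uniformly in $k$.
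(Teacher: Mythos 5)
Your proof is correct, and it is the standard argument for this classical fact. The paper itself states Lemma \ref{lem-div} without proof (it is quoted as a known property, in the spirit of the references for Section 2), so there is no in-paper argument to compare against; your derivation supplies exactly the omitted content. The three ingredients you isolate are the right ones: the representation $\dot E_k^{ij}(A)=k\,\frac{(n-k)!}{n!}\,\delta^{j\,j_2\cdots j_k}_{i\,i_2\cdots i_k}\,a_{i_2j_2}\cdots a_{i_kj_k}$ obtained by differentiating the determinantal formula (the $k$ terms coincide because the generalized Kronecker delta is invariant under simultaneous permutation of upper and lower index pairs), the parallelism of the generalized Kronecker delta so that $\nabla_j$ hits only the $a$-factors, and the total symmetry of $\nabla_j a_{i_2 j_2}$ in $(j,i_2,j_2)$, which follows from Codazzi plus the symmetry of $A$ exactly as you chain the identities. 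The final symmetric-against-antisymmetric contraction in $(j,j_2)$ then kills the sum, and the prefactor $k(k-1)$ even handles $k=1$ automatically. The only cosmetic caveat is index placement: you work with all indices down and the mixed-index delta interchangeably, which is legitimate in the local orthonormal frame you implicitly use (and where the derivative with respect to the entries of a symmetric matrix is taken with the usual formal convention), but it would be worth one sentence making that frame choice explicit.
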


\subsection{Quermassintegrals and modified quermassintegrals}
\label{subsec-quermassintegrals}
Recall that for a convex domain $\Omega$ in hyperbolic space $\mathbb{H}^{n+1}$, the quermassintegrals of $\Omega$ are defined as follows (see \cite[Definition 2.1]{G-Solanes05}):
\begin{equation}
	W_k(\Omega) = \dfrac{\omega_{k-1}\cdots\omega_0}{\omega_{n-1}\cdots\omega_{n-k}}\int_{\mathcal{L}_k} \chi(L_k\cap \Omega) \mathrm{d}L_k,\ k=1,2,\cdots,n,
\end{equation}
where $\omega_k=|\mathbb{S}^k|$ denotes the area of $k$-dimensional sphere, $\mathcal{L}_k$ is the space of $k$-dimensional totally geodesic subspaces $L_k$ in $\mathbb{H}^{n+1}$. The function $\chi$ is defined to be 1 if $L_k\cap \Omega\neq\varnothing$ and to be 0 otherwise. Furthermore, we set
\begin{equation}
	W_0(\Omega) = |\Omega|,\ \ W_{n+1}(\Omega) = |\mathbb{B}^{n+1}| = \dfrac{\omega_n}{n+1}.
\end{equation}
If the boundary of $\Omega$ is smooth, the quermassintegrals and the curvature integrals of a smooth domain $\Omega$ in $\mathbb{H}^{n+1}$ are related by the following equations:
\begin{eqnarray}
	&&W_1(\Omega)=\dfrac{|\partial\Omega|}{n},\notag\\
	&&\int_{\partial\Omega} E_k(\kappa) \mathrm{d}\mu = (n-k) W_{k+1}(\Omega) + k W_{k-1}(\Omega),\ k=1,2,\cdots,n-1,\\
	&&\int_{\partial\Omega} E_n(\kappa) \mathrm{d}\mu = \omega_n + n W_{n-1}(\Omega).
\end{eqnarray}

Recently, Andrews-Chen-Wei \cite{Ben2021} introduced modified quermassintegrals for h-convex domains in $\mathbb{H}^{n+1}$:
\begin{equation}
	\label{equ-defofmodiquer}
	\tilde{W}_k(\Omega):=\sum\limits_{i=0}^k (-1)^{k-i}{k\choose i}W_i(\Omega),\ \  k=0,1,\cdots,n.
\end{equation}
We know that $\tilde{W}_k(\Omega)$ is a linear combination of the quermassintegrals of $\Omega$. It can be verified that (see \cite[Lemma 10.1]{Li2022Hyperbolic})
\begin{eqnarray}
	\int_{\partial\Omega} E_k(\tilde{\kappa}) \mathrm{d}\mu &=& (n-k) \tilde{W}_{k+1}(\Omega) + (n-2k) \tilde{W}_k(\Omega),\ k=1,2,\cdots, n-1,   \label{equ-modifiedrelation}\\
	\int_{\partial\Omega}E_n(\tilde{\kappa})\mathrm{d}\mu&=&\omega_n-n\tilde{W}_n(\Omega),\label{equ-modifiedrelation2}
\end{eqnarray}
where $\tilde{\kappa}_i=\kappa_i-1$ are the shifted principal curvatures of the hypersurface $\partial\Omega$. These new quermassintegrals satisfy the following nice variational formula (see \cite[Lemma 2.4]{Ben2021})
\begin{equation}
	\dfrac{\partial}{\partial t}\tilde{W}_k(\Omega_t) = \int_{\partial\Omega_t} \eta E_{k}(\tilde{\kappa}) \mathrm{d}\mu_t  \label{equ-ptwk}
\end{equation}
along any variation in the direction of outward normal with speed function $\eta$.

\subsection{Hypersurfaces in hyperbolic space}
In this paper, we consider $\mathbb{H}^{n+1}=\mathbb{S}^n\times [0,\infty)$ as a warped product manifold equipped with metric
\begin{equation}
	\overline{g}=\mathrm{d}r^2+\phi^2(r)g_{\mathbb{S}^n}, 
\end{equation}
where $\phi(r)=\sinh r$, $r\in[0,\infty)$, $g_{\mathbb{S}^n}$ denotes the standard metric on $\mathbb{S}^n$. Let
\begin{equation}
	\Phi(r)=\int_0^r\phi(s)\mathrm{d}s=\cosh r-1.
\end{equation}
It's well known that $V=\overline{\nabla}\Phi=\phi(r)\partial_r$ is a conformal Killing field on $\mathbb{H}^{n+1}$, that is
\begin{equation}
	\overline{\nabla} V = \phi'(r)\overline{g}.
\end{equation}

Let $n=1$. Let $\gamma=X(\theta)$ be a smooth closed curve with its position vector $X:\mathbb{S}^1\to\mathbb{H}^2$. Denote $v=|X_{\theta}|=\sqrt{\phi^2(r)+r_{\theta}^2}$, and define the arc-length parameter $s$ by
$$\dfrac{\partial}{\partial s}=\dfrac{1}{v}\dfrac{\partial}{\partial\theta},$$
then $T=\dfrac{\partial X}{\partial s}=\dfrac{1}{v}X_{\theta}$ is the unit tangent vector. Denote $\nu$ as the unit outward normal vector and $\kappa$ as the curvature of $\gamma$, we have the Frenet formula
\begin{equation}
	\overline{\nabla}_T T=-\kappa\nu,\ \ \overline{\nabla}_T \nu=\kappa T.
\end{equation}

It has been verified in \cite[Proposition 3.2]{twodim2022} that the following formulas hold for smooth curves, which can be regarded as the special case for higher dimensions.
\begin{lemma} Let $\gamma$ be a smooth closed curve in $\mathbb{H}^2$ with $s$ be its arc-length parameter. Let $u=\langle V,\nu\rangle$ be its support function, $\kappa$ be its curvature, then
	\begin{eqnarray}
		&&\dfrac{\partial \phi'}{\partial s} =\langle V,T \rangle,\ \ \dfrac{\partial^2 \phi'}{\partial s^2}= \phi'-\kappa u,\label{equ-Phi'}\\
		&&\dfrac{\partial u}{\partial s} = \kappa \langle V,T \rangle,\ \,\dfrac{\partial^2 u}{\partial s^2} = \dfrac{\partial\kappa}{\partial s}\langle V,T \rangle + \kappa (\phi'-\kappa u).
	\end{eqnarray}
    Moreover, 
    \begin{equation}
    	\int_{\gamma}\left(\phi'(r)-\kappa u \right)\mathrm{d}s = 0.  \label{equ-curvemink}
    \end{equation}
\end{lemma}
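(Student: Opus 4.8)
The plan is to establish the four derivative identities by direct computation from the structural facts about $\mathbb{H}^2$ already recorded in the excerpt, and then to obtain the Minkowski-type identity \eqref{equ-curvemink} as an immediate consequence of the second identity in \eqref{equ-Phi'} together with the closedness of $\gamma$.

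First I would compute $\partial_s\phi'$. Writing $\phi'(r)=\cosh r$ and using that the warped-product metric makes $\partial_r$ a unit field orthogonal to the spherical factor, one has $\langle V,T\rangle=\phi(r)\langle\partial_r,T\rangle=\phi(r)\,r_s$; on the other hand $\partial_s\phi'=\phi''(r)\,r_s=\phi(r)\,r_s$ since $\phi=\sinh$ satisfies $\phi''=\phi$. This yields $\partial_s\phi'=\langle V,T\rangle$. Differentiating once more and expanding $\partial_s\langle V,T\rangle=\langle\overline{\nabla}_T V,T\rangle+\langle V,\overline{\nabla}_T T\rangle$, I would substitute the conformal Killing relation $\overline{\nabla}V=\phi'\,\overline{g}$ (so that $\overline{\nabla}_T V=\phi' T$) and the Frenet formula $\overline{\nabla}_T T=-\kappa\nu$; since $|T|=1$ and $\langle V,\nu\rangle=u$, this produces $\partial_s^2\phi'=\phi'-\kappa u$. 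The two identities for $u=\langle V,\nu\rangle$ follow the same pattern: differentiating $u$ and using $\overline{\nabla}_T\nu=\kappa T$ together with $T\perp\nu$ gives $\partial_s u=\kappa\langle V,T\rangle$, and differentiating this while reinserting $\partial_s\langle V,T\rangle=\phi'-\kappa u$ gives the stated second derivative.

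With these in hand, the Minkowski identity is essentially free. Integrating the second identity in \eqref{equ-Phi'} over the closed curve,
\begin{equation*}
	\int_{\gamma}(\phi'-\kappa u)\,\mathrm{d}s=\int_{\gamma}\frac{\partial^2\phi'}{\partial s^2}\,\mathrm{d}s=\int_0^{L[\gamma]}\frac{\mathrm{d}}{\mathrm{d}s}\Big(\frac{\partial\phi'}{\partial s}\Big)\,\mathrm{d}s=\frac{\partial\phi'}{\partial s}\Big|_0^{L[\gamma]}=0,
\end{equation*}
where the last equality uses that $\phi'(r(\cdot))$, and hence its arc-length derivative $\partial_s\phi'=\langle V,T\rangle$, is a smooth $L[\gamma]$-periodic function on the closed curve $\gamma$, so the net change of $\partial_s\phi'$ around $\gamma$ is zero.

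There is no serious obstacle here: once the geometric identities are set up, the argument is a one-line application of the fundamental theorem of calculus on a closed loop. The only point requiring mild care is the bookkeeping in the warped-product metric—keeping track of which inner products are taken with the ambient metric $\overline{g}$ and correctly applying $\overline{\nabla}V=\phi'\overline{g}$ and the Frenet equations—so that the conformal and curvature terms combine into exactly $\phi'-\kappa u$. In particular I would double-check the sign in $\overline{\nabla}_T T=-\kappa\nu$ against the outward normal convention, since this sign is precisely what makes the integrand the correct Minkowski combination.
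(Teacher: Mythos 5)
Your proof is correct and takes essentially the same route as the paper's source: the paper does not reprove this lemma but cites \cite[Proposition 3.2]{twodim2022}, whose verification is exactly this direct computation from the conformal Killing identity $\overline{\nabla}V=\phi'\,\overline{g}$ and the Frenet formulas, followed by integrating $\dfrac{\partial^2\phi'}{\partial s^2}=\phi'-\kappa u$ around the closed curve. Your sign conventions match the paper's (with outward normal $\nu$ and $\overline{\nabla}_T T=-\kappa\nu$, $\overline{\nabla}_T\nu=\kappa T$), so no adjustment is needed.
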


Let $n\geqslant 2$. Let $M$ be a smooth closed hypersurface in $\mathbb{H}^{n+1}$ with induced metric $g$ and unit outward normal vector $\nu$. Let $\{x^1,x^2,\cdots,x^n\}$ be a local coordinate system on $M$, we denote $g_{ij}=g(\partial_i,\partial_j)$ and its second fundamental form $h_{ij}=\langle \overline{\nabla}_{\partial_i}\nu,\partial_j \rangle$. The Weingarten matrix is denoted by $A=(h^i_j)$, where $h^i_j=g^{ik}h_{kj}$, and $(g^{ij})$ is the inverse matrix of $(g_{ij})$. The principal curvatures $\kappa=(\kappa_1,\kappa_2,\cdots,\kappa_n)$ are eigenvalues of the Weingarten matrix $A$. The connection on $M$ is denoted by $\nabla$.

The following formulas hold for smooth hypersurfaces in $\mathbb{H}^{n+1}$, see \cite[Lemmas 2.2 and 2.6]{Guan2013AMC}.
\begin{lemma}\label{lem-hypersurface}
	Let $(M,g)$ be a smooth hypersurface in $\mathbb{H}^{n+1}$ with local frame $\{\partial_1,\cdots,\partial_n\}$, then $\Phi|_M$ satisfies
	\begin{equation}
		\nabla_i\Phi =\langle V,e_i \rangle,\ \ \nabla_j\nabla_i \Phi= \phi' g_{ij} - u h_{ij},
	\end{equation}
    and the support function $u=\langle V,\nu\rangle$ satisfies
    \begin{equation}
    	\nabla_i u = \langle V,e_l \rangle h_i^l,\ \ \nabla_j\nabla_i u= \langle V,\nabla h_{ij} \rangle + \phi' h_{ij} - u(h^2)_{ij},
    \end{equation}
    where $(h^2)_{ij}=h_{ik}h^k_j$.
\end{lemma}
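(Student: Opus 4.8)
The plan is to derive all four identities from a single structural fact, namely that $V=\overline{\nabla}\Phi$ is a conformal Killing field, $\overline{\nabla}_X V=\phi'(r)\,X$ for every vector field $X$, together with the Gauss--Weingarten relations for the immersion $M\hookrightarrow\mathbb{H}^{n+1}$. I would fix a local frame $\{\partial_1,\dots,\partial_n\}$ on $M$ and record two decompositions that follow from the sign convention $h_{ij}=\langle\overline{\nabla}_{\partial_i}\nu,\partial_j\rangle$: since $\langle\nu,\partial_j\rangle=0$ we get $h_{ij}=-\langle\nu,\overline{\nabla}_{\partial_i}\partial_j\rangle$, hence $\overline{\nabla}_{\partial_i}\partial_j=\nabla_{\partial_i}\partial_j-h_{ij}\nu$; and since $\overline{\nabla}_{\partial_i}\nu\perp\nu$ we have the Weingarten relation $\overline{\nabla}_{\partial_i}\nu=h_i^l\partial_l$. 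Everything below is then a matter of expanding derivatives and sorting tangential from normal components.

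First I would treat $\Phi|_M$. As $\partial_i$ is tangent to $M$, we have $\nabla_i\Phi=\partial_i\Phi=\langle\overline{\nabla}\Phi,\partial_i\rangle=\langle V,\partial_i\rangle=\langle V,e_i\rangle$. For the Hessian I would write $\nabla_j\nabla_i\Phi=\partial_j\langle V,\partial_i\rangle-\Gamma_{ij}^k\,\partial_k\Phi$ and expand $\partial_j\langle V,\partial_i\rangle=\langle\overline{\nabla}_{\partial_j}V,\partial_i\rangle+\langle V,\overline{\nabla}_{\partial_j}\partial_i\rangle$. The first term equals $\phi'g_{ij}$ by the conformal Killing property, while the Gauss decomposition turns the second into $\Gamma_{ij}^k\langle V,\partial_k\rangle-h_{ij}\langle V,\nu\rangle=\Gamma_{ij}^k\,\partial_k\Phi-u\,h_{ij}$. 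The Christoffel contributions cancel, leaving $\nabla_j\nabla_i\Phi=\phi'g_{ij}-u\,h_{ij}$.

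Next, for $u=\langle V,\nu\rangle$, I would compute $\nabla_i u=\langle\overline{\nabla}_{\partial_i}V,\nu\rangle+\langle V,\overline{\nabla}_{\partial_i}\nu\rangle$; the first term is $\phi'\langle\partial_i,\nu\rangle=0$, and the Weingarten relation converts the second into $h_i^l\langle V,\partial_l\rangle=\langle V,e_l\rangle\,h_i^l$. Observing that this says precisely $\nabla_i u=h_i^l\,\nabla_l\Phi$, I would differentiate once more via the product rule, $\nabla_j\nabla_i u=(\nabla_j h_i^l)\,\nabla_l\Phi+h_i^l\,\nabla_j\nabla_l\Phi$. Substituting the Hessian formula for $\Phi$ already obtained, the second summand becomes $h_i^l(\phi'g_{jl}-u\,h_{jl})=\phi'h_{ij}-u\,(h^2)_{ij}$.

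The one genuinely substantive point—and the step I would be most careful with—is the remaining summand $(\nabla_j h_i^l)\,\nabla_l\Phi$. Writing $\nabla_j h_i^l=g^{lm}\nabla_j h_{im}$ and invoking the Codazzi equation, which makes $\nabla h$ totally symmetric \emph{because $\mathbb{H}^{n+1}$ is a space form}, I would replace $\nabla_j h_{im}$ by $\nabla_m h_{ij}$ to get $(\nabla_j h_i^l)\,\nabla_l\Phi=g^{lm}(\nabla_m h_{ij})\,\nabla_l\Phi=\langle V,\nabla h_{ij}\rangle$, where $\langle V,\nabla h_{ij}\rangle$ denotes the contraction of the tangential part of $V$ against the covariant gradient of $h_{ij}$. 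Combining the two summands yields $\nabla_j\nabla_i u=\langle V,\nabla h_{ij}\rangle+\phi'h_{ij}-u\,(h^2)_{ij}$. Apart from this Codazzi step, the argument is bookkeeping organized around the two structural identities, so the only real content is keeping the sign convention for $h_{ij}$ and the constant-curvature hypothesis used consistently.
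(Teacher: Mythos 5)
Your proof is correct, and it is essentially the standard argument: the paper itself does not prove this lemma but cites it to Guan--Li (Lemmas 2.2 and 2.6 of \cite{Guan2013AMC}), where the derivation proceeds exactly as you do, from the conformal Killing identity $\overline{\nabla}_X V=\phi' X$ together with the Gauss--Weingarten relations and the Codazzi symmetry of $\nabla h$ in a space form. Your sign conventions for $h_{ij}$ and the interpretation of $\langle V,\nabla h_{ij}\rangle$ as the contraction of the tangential part of $V$ (namely $\nabla\Phi$) with $\nabla h_{ij}$ are both consistent with the paper's usage, so nothing is missing.
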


We also have the following Minkowski formulas.
\begin{lemma}[{\cite[Proposition 2.5]{Guan2013AMC}}]
		Let $M$ be a smooth closed hypersurface in $\mathbb{H}^{n+1}$. Then
		\begin{equation}
			\int_M \phi' E_k(\kappa) \mathrm{d}\mu = \int_M u E_{k+1}(\kappa) \mathrm{d}\mu,\ \ k=0,1,\cdots,n-1,
		\end{equation} 
		where $E_k(\kappa)$ is the $k$th mean curvature of $M$.
\end{lemma}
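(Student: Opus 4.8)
The plan is to contract the Hessian formula for $\Phi$ against the Newton-type tensor $\dot{E}_{k+1}^{ij}$ and then integrate by parts using the divergence-free property of Lemma \ref{lem-div}. Concretely, I would start from the second identity in Lemma \ref{lem-hypersurface}, namely $\nabla_j\nabla_i\Phi = \phi' g_{ij} - u h_{ij}$, and contract both sides with $\dot{E}_{k+1}^{ij}=\partial E_{k+1}(A)/\partial h_{ij}$, where $A=(h^i_j)$ is the Weingarten matrix. Working at a point in normal coordinates so that $g_{ij}=\delta_{ij}$, the trace identities of Lemma \ref{lem-elementarysymmetric} (applied with $k$ replaced by $k+1$) give $\sum_{i,j}\dot{E}_{k+1}^{ij}g_{ij} = (k+1)E_k(\kappa)$ and $\sum_{i,j}\dot{E}_{k+1}^{ij}h_{ij} = (k+1)E_{k+1}(\kappa)$. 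Hence contracting the right-hand side yields the pointwise quantity $(k+1)\big(\phi' E_k(\kappa) - u E_{k+1}(\kappa)\big)$.

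Next I would rewrite the contracted left-hand side as a divergence. Since $M$ lies in the space form $\mathbb{H}^{n+1}$, the Codazzi equation shows that the second fundamental form $h_{ij}$ is a Codazzi tensor, so Lemma \ref{lem-div} applies and gives $\sum_j \nabla_j\dot{E}_{k+1}^{ij} = 0$. Combined with $\nabla_i\Phi = \langle V,e_i\rangle$ from Lemma \ref{lem-hypersurface}, the product rule then lets me write
$$\sum_{i,j}\dot{E}_{k+1}^{ij}\nabla_j\nabla_i\Phi = \sum_{i,j}\nabla_j\big(\dot{E}_{k+1}^{ij}\nabla_i\Phi\big),$$
that is, the left-hand side is the divergence of the vector field with components $\dot{E}_{k+1}^{ij}\nabla_i\Phi$.

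Integrating the resulting pointwise identity over the closed hypersurface $M$, the divergence theorem annihilates the left-hand side, leaving $0 = (k+1)\int_M\big(\phi' E_k(\kappa) - u E_{k+1}(\kappa)\big)\,\mathrm{d}\mu$. Dividing by $k+1>0$ gives the claimed Minkowski formula for each $k=0,1,\cdots,n-1$; the case $k=0$ is just $\int_M\phi'\,\mathrm{d}\mu=\int_M u E_1(\kappa)\,\mathrm{d}\mu$, recovered by taking $\dot{E}_1^{ij}=\tfrac1n\delta^{ij}$ so that $\tfrac1n\Delta\Phi = \phi' - uE_1(\kappa)$.

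The computation is short once these ingredients are assembled, so there is no serious analytic obstacle; the one point that must be verified with care is the vanishing of the divergence term, which hinges on $h_{ij}$ being Codazzi in a space form so that Lemma \ref{lem-div} is genuinely applicable. The remaining work is purely bookkeeping: matching the combinatorial normalizations in Lemma \ref{lem-elementarysymmetric} (the factor $k+1$ that appears when differentiating $E_{k+1}$) and keeping the index positions consistent when passing between $h_{ij}$, $h^i_j$ and the metric contraction.
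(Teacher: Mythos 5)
Your proof is correct: contracting $\nabla_j\nabla_i\Phi=\phi' g_{ij}-uh_{ij}$ with $\dot{E}_{k+1}^{ij}$, applying the trace identities of Lemma \ref{lem-elementarysymmetric}, and integrating by parts via the divergence-free property of Lemma \ref{lem-div} (valid because $h_{ij}$ is Codazzi in a space form) is exactly the standard argument for the Minkowski formulas. The paper itself states this lemma as a citation to \cite[Proposition 2.5]{Guan2013AMC} without reproducing a proof, and your argument coincides with that reference's approach -- indeed it is the same Hessian-contraction-plus-integration-by-parts machinery the paper uses elsewhere, e.g.\ in deriving \eqref{equ-ekdotijphi} and the evolution identities in Section \ref{section-4}.
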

	For shifted $k$th mean curvature, the Minkowski type formulas also hold.
	\begin{lemma}[{\cite[Lemma 2.6]{Locallyconstrained}}]
		Let $M$ be a smooth closed hypersurface in $\mathbb{H}^{n+1}$. Let $\tilde{\kappa}_i=\kappa_i-1$ be the shifted principal curvatures of $M$, then
		\begin{equation}
			\int_M (\phi'-u)E_k(\tilde{\kappa}) \mathrm{d}\mu = \int_M uE_{k+1}(\tilde{\kappa}) \mathrm{d}\mu,\ \ k=0,1,\cdots,n-1.  \label{equ-shiftedminkowski}
		\end{equation}
	\end{lemma}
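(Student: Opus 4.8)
The plan is to run the classical divergence argument behind the Minkowski formula, but applied to the \emph{shifted} Weingarten tensor rather than to the full second fundamental form. Write $\tilde{h}_{ij}=h_{ij}-g_{ij}$ for the shifted second fundamental form, so that $\tilde{A}=(g^{ik}\tilde{h}_{kj})$ has eigenvalues $\tilde{\kappa}_i=\kappa_i-1$ and $E_m(\tilde{A})=E_m(\tilde{\kappa})$. The starting point is the Hessian identity for $\Phi$ recorded in Lemma~\ref{lem-hypersurface}, namely $\nabla_j\nabla_i\Phi=\phi' g_{ij}-u h_{ij}$. Substituting $h_{ij}=\tilde{h}_{ij}+g_{ij}$ rewrites this entirely in terms of the shifted quantities:
\begin{equation}
	\nabla_j\nabla_i\Phi=(\phi'-u)g_{ij}-u\,\tilde{h}_{ij}. \notag
\end{equation}
This is the exact analogue, for the shifted tensor, of the Hessian formula that drives the usual Minkowski identity.

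First I would contract this identity with $\dot{E}_{k+1}^{ij}(\tilde{A})$, the derivative of $E_{k+1}$ evaluated at the shifted matrix. Applying the two algebraic contraction identities of Lemma~\ref{lem-elementarysymmetric} to $\tilde{A}$ (the trace identity $\dot{E}_{k+1}^{ij}(\tilde{A})g_{ij}=(k+1)E_k(\tilde{\kappa})$ and the Euler-type identity $\dot{E}_{k+1}^{ij}(\tilde{A})\tilde{h}_{ij}=(k+1)E_{k+1}(\tilde{\kappa})$) yields
\begin{equation}
	\dot{E}_{k+1}^{ij}(\tilde{A})\,\nabla_j\nabla_i\Phi=(k+1)\Big[(\phi'-u)E_k(\tilde{\kappa})-u\,E_{k+1}(\tilde{\kappa})\Big]. \notag
\end{equation}
The right-hand side is, up to the factor $k+1$, precisely the integrand whose integral I want to show vanishes. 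Note the range $0\leqslant k\leqslant n-1$ is exactly what makes $E_{k+1}(\tilde{\kappa})$ meaningful.

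The crucial structural input is that $\tilde{h}_{ij}$ is again a Codazzi tensor: since $g_{ij}$ is parallel and the full second fundamental form $h_{ij}$ satisfies the Codazzi equations in the space form $\mathbb{H}^{n+1}$, we have $\nabla_k\tilde{h}_{ij}=\nabla_k h_{ij}$, which is totally symmetric. Hence Lemma~\ref{lem-div} applies to $\tilde{A}$ and gives $\sum_j\nabla_j\dot{E}_{k+1}^{ij}(\tilde{A})=0$. Integrating the contracted identity over the closed hypersurface $M$ and integrating by parts, I would move the two derivatives off $\Phi$: the term $\int_M\nabla_j\big(\dot{E}_{k+1}^{ij}(\tilde{A})\nabla_i\Phi\big)\,\mathrm{d}\mu$ vanishes by the divergence theorem, while the remaining term $-\int_M\big(\nabla_j\dot{E}_{k+1}^{ij}(\tilde{A})\big)\nabla_i\Phi\,\mathrm{d}\mu$ vanishes by the divergence-free property just established. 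Therefore the integral of the left-hand side is zero, and dividing by $k+1$ gives exactly \eqref{equ-shiftedminkowski}.

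I expect the only genuine subtlety to be bookkeeping rather than conceptual: one must confirm the index conventions so that the contraction identities of Lemma~\ref{lem-elementarysymmetric}, phrased for a symmetric matrix with $\delta_i^j$, are correctly transcribed with the metric $g_{ij}$ in the curved setting, and one must verify the Codazzi property of $\tilde{h}_{ij}$ with care, since it is exactly what licenses the integration by parts. The case $k=0$ is a useful sanity check: there the identity collapses to $\int_M\Delta\Phi\,\mathrm{d}\mu=0$ after writing $\Delta\Phi=n(\phi'-u)-n u\,E_1(\tilde{\kappa})$, which holds trivially on a closed manifold.
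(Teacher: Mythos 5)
Your proof is correct and follows essentially the same route as the paper's machinery: the contracted Hessian identity you derive is exactly the paper's \eqref{equ-ekdotijphi} with $k$ replaced by $k+1$, and integrating it over the closed hypersurface using the divergence-free property of Lemma \ref{lem-div} (valid since $\tilde{h}_{ij}=h_{ij}-g_{ij}$ is Codazzi in the space form) is precisely how \eqref{equ-shiftedminkowski} is established in \cite{Locallyconstrained}. No gaps; your bookkeeping caveats (metric versus $\delta^i_j$ conventions, Codazzi check) are the right ones and are handled correctly.
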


\subsection{General evolution equations}
Let $X:M^n\times [0,T)\to \mathbb{H}^{n+1}$  be a family of immersed hypersurfaces satisfying the outward normal variation with speed $\eta$
\begin{equation}
	\partial_t X(x,t) = \eta(x,t) \nu (x,t). \label{equ-etaflow}
\end{equation}
Denote $M_t=X_t(M)$ and let $\{\partial_1,\partial_2,\cdots,\partial_n\}$ be local frame on $M_t$. Then the induced metric $g_{ij}$, the unit outward normal vector $\nu$, the second fundamental form $h_{ij}$, the weingarten tensor $h^i_j$, the induced area element $\mathrm{d}\mu_t$ evolve as follows (see \cite[Lemma 3.1]{Guan2013AMC}):
\begin{eqnarray}
	\dfrac{\partial}{\partial t} g_{ij} &=& 2\eta h_{ij},\\
	\dfrac{\partial}{\partial t} \nu &=& -\nabla\eta,\\
	\dfrac{\partial}{\partial t} h_{ij} &=& -\nabla_j\nabla_i \eta + \eta(h^2)_{ij}+\eta g_{ij},\\
	\dfrac{\partial}{\partial t} h^i_j &=& -\nabla_j\nabla^i \eta - \eta (h^2)^i_j + \eta\delta^i_j,\\
	\dfrac{\partial}{\partial t} \mathrm{d}\mu_t &=& \eta H \mathrm{d}\mu_t, \label{equ-areaevolution}
\end{eqnarray}
where $H=nE_1(\kappa)$ is the mean curvature of $M_t$, $\nabla$ denotes the Levi-Civita connection with respect to the induced metric $g_{ij}$ on $M_t$.

Moreover, along the flow (\ref{equ-etaflow}), $\Phi(r)=\cosh r-1$ and the support function $u$ evolve as follows:
\begin{eqnarray}
	\dfrac{\partial}{\partial t} \Phi &=& \left\langle \overline{\nabla} \Phi, \partial_t X \right\rangle = \langle V,\eta\nu \rangle=\eta u,\label{equ-ptPhi}\\
	\dfrac{\partial}{\partial t} u &=&  \langle \overline{\nabla}_{\partial_t X} V,\nu \rangle + \langle V,\partial_t \nu \rangle = \eta\phi'(r)-\langle V,\nabla \eta \rangle. \label{equ-ptu}
\end{eqnarray}

Let $\Omega_t$ be the domain enclosed by $M_t$, then the volume of $\Omega_t$ denoted by $\mathrm{Vol}(\Omega_t)$ evolves by
\begin{equation}
	\dfrac{\partial}{\partial t}\mathrm{Vol}(\Omega_t) = \int_{M_t} \eta \mathrm{d}\mu_t. \label{equ-volumeevolution}
\end{equation}

Now we would like to derive the evolution equation for the shifted $k$th mean curvature $E_k(\tilde{\kappa})$ (see also \cite[Section 9.1]{Locallyconstrained}). Set $S^i_j=h^i_j-\delta^i_j$, then the shifted weingarten tensor satisfies
\begin{eqnarray}
	\dfrac{\partial}{\partial t} S^i_j &=& -\nabla_j\nabla^i\eta - \eta\left( (S^2)^i_j+2S^i_j \right).
\end{eqnarray}
Denote $\dot{E}_k^{ij}(\tilde{\kappa}) = \dfrac{\partial E_k(\tilde{\kappa})}{\partial S^i_j}$, then by Lemma \ref{lem-elementarysymmetric}, we have
\begin{eqnarray}
	\dot{E}_k^{ij}(\tilde{\kappa})\delta^i_j &=& k E_{k-1}(\tilde{\kappa}),\\
	\dot{E}_k^{ij}(\tilde{\kappa})S^i_j &=& kE_k(\tilde{\kappa}),\\
	\dot{E}_k^{ij}(\tilde{\kappa})\left(S^2\right)^i_j &=& n E_1(\tilde{\kappa}) E_k(\tilde{\kappa}) - (n-k) E_{k+1}(\tilde{\kappa}).
\end{eqnarray}
From Lemma \ref{lem-hypersurface}, we get
\begin{eqnarray}
	\nabla_j\nabla^i \Phi	&=& (\phi'-u)\delta^i_j - u S^i_j,\\
	\nabla_j\nabla^i u &=& \langle V,\nabla S^i_j \rangle - u \left( (S^2)^i_j+2S^i_j + \delta^i_j\right) + \phi'(S^i_j+\delta^i_j),\\
	\nabla_j\nabla^i(\Phi-u) &=& -\phi'S^i_j-\langle V,\nabla S^i_j \rangle + u \left( (S^2)^i_j+S^i_j \right).
\end{eqnarray}
Combining the above identities, we obtain
\begin{equation}
	\dot{E}_k^{ij}(\tilde{\kappa})\nabla_j\nabla^i \Phi = k \left( (\phi'-u)E_{k-1}(\tilde{\kappa}) - u E_k(\tilde{\kappa}) \right), \label{equ-ekdotijphi}
\end{equation}
and the shifted $k$th mean curvature evolves as follows:
\begin{eqnarray}
\dfrac{\partial}{\partial t} E_k(\tilde{\kappa}) &=& -\dot{E}_k^{ij}(\tilde{\kappa})\nabla_j\nabla^i\eta - \left( nE_1(\tilde{\kappa})E_k(\tilde{\kappa}) - (n-k)E_{k+1}(\tilde{\kappa}) \right)\eta - 2k E_k(\tilde{\kappa})\eta.\label{equ-ptek}
\end{eqnarray}

\section{Flow for strictly h-convex curves in hyperbolic space}
\label{section-3}
In this section, we will prove Theorem \ref{thm-existenceandconvergence}, the long time existence and convergence results for the flow (\ref{equ-flow}).

Recall that Heintze-Karcher type inequality for h-convex curves is obtained in \cite[Proposition 8.2]{Li2022Hyperbolic}. Let $\gamma$ be a smooth strictly h-convex curve in $\mathbb{H}^2$ enclosing a domain $\Omega$, then
\begin{equation}
	\label{equ-heintze-karcher-type}
	\int_{\partial\Omega} \dfrac{\phi'(r)-u}{\kappa-1} - u \mathrm{d}s \geqslant 0.
\end{equation}
Equality holds in \eqref{equ-heintze-karcher-type} if and only if $\Omega$ is a geodesic ball. This is an important ingredient in our proof.

We will first verify that the difference between the length of curve and the area of the domain enclosed by the curve $L[\gamma_t]-A[\gamma_t]$ preserves along the flow (\ref{equ-flow}).

\begin{lemma}\label{lemma-la}
	Along the flow \eqref{equ-flow}, the smooth closed curve $\gamma_t$ satisfies
	\begin{eqnarray}
		\dfrac{\partial}{\partial t} \left(L[\gamma_t] - A[\gamma_t] \right) = 0, \label{equ-mono1}
	\end{eqnarray}
	where $L[\gamma_t]$ is the length of $\gamma_t$, $A[\gamma_t]$ denotes the area of the domain enclosed by $\gamma_t$.
\end{lemma}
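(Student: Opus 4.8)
The plan is to differentiate $L[\gamma_t]$ and $A[\gamma_t]$ separately using the general evolution equations of Section~\ref{section-2} specialized to $n=1$, combine them, and recognize the resulting integrand as the Minkowski integrand in \eqref{equ-curvemink}.

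First I would record the flow speed: the flow \eqref{equ-flow} is of the form \eqref{equ-etaflow} with normal speed
\begin{equation*}
	\eta = \frac{\phi'(r)-u}{\kappa-1} - u.
\end{equation*}
Specializing the area-element evolution \eqref{equ-areaevolution} to the curve case, where $\mathrm{d}\mu_t = \mathrm{d}s$ and $H = E_1(\kappa) = \kappa$, gives $\partial_t(\mathrm{d}s) = \eta\kappa\,\mathrm{d}s$, hence
\begin{equation*}
	\frac{\partial}{\partial t}L[\gamma_t] = \int_{\gamma_t}\eta\kappa\,\mathrm{d}s.
\end{equation*}
Likewise the volume evolution \eqref{equ-volumeevolution} reads $\partial_t A[\gamma_t] = \int_{\gamma_t}\eta\,\mathrm{d}s$. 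Subtracting the two,
\begin{equation*}
	\frac{\partial}{\partial t}\bigl(L[\gamma_t]-A[\gamma_t]\bigr) = \int_{\gamma_t}\eta(\kappa-1)\,\mathrm{d}s.
\end{equation*}

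The decisive step is the algebraic simplification of the integrand: multiplying the speed by the shifted curvature clears the denominator,
\begin{equation*}
	\eta(\kappa-1) = (\phi'-u) - u(\kappa-1) = \phi'(r) - \kappa u,
\end{equation*}
so the time derivative becomes exactly $\int_{\gamma_t}\bigl(\phi'(r)-\kappa u\bigr)\,\mathrm{d}s$, which vanishes by the Minkowski formula \eqref{equ-curvemink}. This yields \eqref{equ-mono1}.

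I do not expect a serious obstacle here: the computation is direct once one observes that the factor $\kappa-1$ in the denominator of the flow speed is precisely cancelled by the factor $\kappa-1$ produced by the difference of the length and area first-variation formulas, leaving the curve Minkowski identity. The only points requiring a little care are the correct specialization of the higher-dimensional evolution equations to $n=1$ (in particular $H=\kappa$), and the implicit fact that $\gamma_t$ stays strictly h-convex so that $\kappa-1>0$ and $\eta$ is well defined along the flow; the latter is part of the a priori estimates established separately in this section.
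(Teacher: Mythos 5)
Your proposal is correct and follows exactly the paper's own argument: combine the length evolution \eqref{equ-areaevolution} (with $H=\kappa$ for $n=1$) and the area evolution \eqref{equ-volumeevolution} to get $\partial_t(L-A)=\int_{\gamma_t}\eta(\kappa-1)\,\mathrm{d}s$, observe that $\eta(\kappa-1)=\phi'(r)-\kappa u$, and conclude by the Minkowski formula \eqref{equ-curvemink}. The paper's proof is a condensed version of precisely this computation, so there is nothing to add.
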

\begin{proof}
	Set $\eta=\dfrac{\phi'(r)-u}{\kappa-1}-u$. By the evolution equation of length \eqref{equ-areaevolution} and area \eqref{equ-volumeevolution}, and Minkowski formula (\ref{equ-curvemink}), we know
	\begin{equation}
		\dfrac{\partial}{\partial t} \left( L[\gamma_t]-A[\gamma_t] \right) =\int_{\gamma_t} \eta(\kappa-1) \mathrm{d}s = \int_{\gamma_t} \left(\phi'(r)-\kappa u \right) \mathrm{d}s = 0,
	\end{equation}
    as desired.
\end{proof}

\subsection{A priori estimates} We derive some a priori estimates for the flow (\ref{equ-flow}).
\begin{proposition}[Curvature Estimate]
	\label{prop-curvature}
	Let $X_0$ be a strictly h-convex curve, then $\gamma_t$ is strictly h-convex along the flow \eqref{equ-flow}. Furthermore, we have the uniform curvature estimate
	\begin{equation}
		\min \kappa_0 \leqslant \kappa(\cdot,t) \leqslant \max \kappa_0, \label{equ-kappa}
	\end{equation}
	where $\kappa_0$ is the curvature of $X_0$.
\end{proposition}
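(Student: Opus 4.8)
The plan is to derive a scalar parabolic evolution equation for the curvature $\kappa$ and then run the maximum principle on its spatial maximum and minimum. Specializing the general evolution equation \eqref{equ-ptek} to $n=1$, $k=1$ (so that $E_1(\tilde\kappa)=\kappa-1$, $E_2(\tilde\kappa)=0$, and $\dot{E}_1^{ij}\nabla_j\nabla^i\eta=\eta_{ss}$, where subscript $s$ denotes the arc-length derivative) gives
\begin{equation*}
	\partial_t\kappa=-\eta_{ss}-(\kappa^2-1)\eta,\qquad \eta=\frac{\phi'-u}{\kappa-1}-u.
\end{equation*}
First I would compute $\eta_{ss}$ with the help of \eqref{equ-Phi'}. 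Writing $w=\phi'-u$ and $\tilde\kappa=\kappa-1$, direct differentiation produces among the terms the contribution $-\tfrac{w}{\tilde\kappa^{2}}\kappa_{ss}$, so the equation takes the form
\begin{equation*}
	\partial_t\kappa=\frac{\phi'-u}{(\kappa-1)^2}\,\kappa_{ss}+(\text{terms in }\kappa,\kappa_s).
\end{equation*}
Since $V=\sinh r\,\partial_r$ gives $u=\sinh r\,\langle\partial_r,\nu\rangle$ with $|\langle\partial_r,\nu\rangle|\le 1$, one has $\phi'-u\ge\cosh r-\sinh r>0$; together with $\kappa>1$ this shows the coefficient of $\kappa_{ss}$ is strictly positive, so the equation is parabolic as long as the solution stays strictly h-convex.

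The heart of the argument is an algebraic cancellation at a spatial extremum of $\kappa$. There $\kappa_s=0$, and using $\phi'_{ss}=\phi'-\kappa u$ and $u_{ss}=\kappa_s\langle V,T\rangle+\kappa(\phi'-\kappa u)$ from \eqref{equ-Phi'} I would evaluate
\begin{equation*}
	\eta_{ss}\big|_{\kappa_s=0}=-(1+\kappa)(\phi'-\kappa u)-\frac{(\phi'-u)\,\kappa_{ss}}{(\kappa-1)^2}.
\end{equation*}
On the other hand, $(\kappa^2-1)\eta=(\kappa+1)\big[(\phi'-u)-(\kappa-1)u\big]=(\kappa+1)(\phi'-\kappa u)$, which exactly cancels the zeroth-order term above. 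Hence at every spatial extremum of $\kappa$,
\begin{equation*}
	\partial_t\kappa=\frac{(\phi'-u)\,\kappa_{ss}}{(\kappa-1)^2}.
\end{equation*}

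To finish I would apply Hamilton's trick to the Lipschitz functions $\kappa_{\max}(t)=\max_{\mathbb{S}^1}\kappa(\cdot,t)$ and $\kappa_{\min}(t)=\min_{\mathbb{S}^1}\kappa(\cdot,t)$. At a spatial maximum $\kappa_{ss}\le 0$, so $\tfrac{d}{dt}\kappa_{\max}\le 0$; at a spatial minimum $\kappa_{ss}\ge 0$, so $\tfrac{d}{dt}\kappa_{\min}\ge 0$. This yields $\min\kappa_0\le\kappa(\cdot,t)\le\max\kappa_0$ for as long as the flow exists, which is \eqref{equ-kappa}; in particular $\kappa\ge\min\kappa_0>1$, so strict h-convexity is preserved, and this in turn keeps the flow uniformly parabolic and non-degenerate, closing the bootstrap begun from short-time existence. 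I expect the main obstacle to be the $\eta_{ss}$ computation and the verification of the precise cancellation of the zeroth-order terms at extrema; once that identity is established, the general positivity $\phi'-u>0$ for curves in $\mathbb{H}^2$ and the sign of $\kappa_{ss}$ at extrema make the maximum principle immediate.
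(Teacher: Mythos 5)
Your proposal is correct and follows essentially the same route as the paper: both derive the parabolic evolution equation for $\kappa$ along the flow, observe that the zeroth-order terms cancel exactly (leaving only $\frac{\phi'-u}{(\kappa-1)^2}\kappa_{ss}$ plus gradient terms), and conclude by the maximum principle using $\phi'-u>0$ and $\kappa>1$. The only difference is cosmetic: the paper computes the full evolution equation including the first-order terms and then applies the maximum principle, while you shortcut the gradient bookkeeping by evaluating $\eta_{ss}$ only at spatial extrema via Hamilton's trick; your cancellation identity and signs check out.
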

\begin{proof}
	We calculate the evolution equation of curvature $\kappa(\cdot,t)$ for $\gamma_t$. Let $s$ be the arc-length parameter. After a direct computation, we get
	\begin{eqnarray}
		\dfrac{\partial\kappa}{\partial t} &=& \dfrac{1}{\kappa-1}\left(-\dfrac{\partial^2 \phi'}{\partial s^2} + \dfrac{\partial^2 u}{\partial s^2} \right) + \dfrac{2}{(\kappa-1)^2}\left( \dfrac{\partial\phi'}{\partial s} - \dfrac{\partial u}{\partial s} \right) \dfrac{\partial\kappa}{\partial s} + \dfrac{\phi'-u}{(\kappa-1)^2} \dfrac{\partial^2 \kappa}{\partial s^2}\notag\\
		&&-\dfrac{2(\phi'-u)}{(\kappa-1)^3}\left( \dfrac{\partial\kappa}{\partial s} \right)^2 + \dfrac{\partial^2 u}{\partial s^2} - (\kappa^2-1)\left( \dfrac{\phi'-u}{\kappa-1}-u \right).
	\end{eqnarray}
	By using
	\begin{eqnarray}
		\dfrac{\partial\phi'}{\partial s}=\langle V,T \rangle,\ \ \dfrac{\partial^2 \phi'}{\partial s^2} = \phi'-\kappa u, \ \ \dfrac{\partial u}{\partial s} = \kappa \langle V,T \rangle,\ \ \dfrac{\partial^2 u}{\partial s^2} = \dfrac{\partial\kappa}{\partial s}\langle V,T \rangle + \kappa (\phi'-\kappa u),
	\end{eqnarray}
	we get
	\begin{equation}
		\dfrac{\partial\kappa}{\partial t} = \dfrac{\phi'-u}{(\kappa-1)^2}\dfrac{\partial^2\kappa}{\partial s^2} + \dfrac{\partial\kappa}{\partial s}\left[ \dfrac{\partial}{\partial s}\left( \dfrac{\phi'-u}{(\kappa-1)^2} \right) + \langle V,T \rangle\right].
	\end{equation}
	Note that $\phi'-u>0$, by the maximum principle, we have
	\begin{equation}
		\dfrac{\partial}{\partial t} \max\limits_{x\in\mathbb{S}^1} \kappa(x,t)\leqslant 0,\ \dfrac{\partial}{\partial t} \min\limits_{x\in\mathbb{S}^1} \kappa(x,t)\geqslant 0.
	\end{equation}
	Then (\ref{equ-kappa}) follows and $\gamma_t$ remains strictly h-convex.
\end{proof}

Note that h-convexity implies star-shapedness with respect to some point inside the domain. In the following, we will regard the solution curve of the flow (\ref{equ-flow}) as the graph over  $\mathbb{S}^1$, $\gamma_t=\{(\rho(\theta),\theta):\theta\in\mathbb{S}^1\}$, where $\rho:\mathbb{S}^1\to (0,\infty)$ is the radial distance. The unit outward normal vector is
\begin{equation}
	\nu=\dfrac{1}{\sqrt{1+\rho_{\theta}^2\phi(\rho)^{-2}}}\left( \partial\rho-\rho_{\theta}\phi(\rho)^{-2}\partial_\theta \right),
\end{equation}
the curvature is
\begin{equation}
	\kappa = \dfrac{\phi(\rho)^2\phi'(\rho)+2\rho_{\theta}^2\phi'(\rho)-\rho_{\theta\theta}\phi(\rho)}{\left(\phi(\rho)^2+ \rho_{\theta}^2\right)^{\frac{3}{2}}}, \label{equ-ka}
\end{equation}
the support function is
\begin{equation}
	u=\langle V,\nu \rangle = \dfrac{\phi(\rho)}{\sqrt{1+\rho_{\theta}^2\phi(\rho)^{-2}}}.\label{equ-support}
\end{equation}
For a family of curves which can be regarded as the graph over $\mathbb{S}^1$, we can write
\begin{equation*}
	X(x,t)=(\rho(\psi(x,t),t),\psi(x,t)),
\end{equation*}
where $\psi(\cdot,t):\mathbb{S}^1\to\mathbb{S}^1$ is a family of diffeomorphism and $\rho(\cdot,t):\mathbb{S}^1\to(0,\infty)$ is the radial distance. Then
\begin{equation*}
	\dfrac{\partial X}{\partial t} = \left( \dfrac{\partial\rho}{\partial\theta} \dfrac{\partial\psi}{\partial t} + \dfrac{\partial\rho}{\partial t} \right)\dfrac{\partial}{\partial \rho} + \dfrac{\partial\psi}{\partial t}\dfrac{\partial}{\partial\theta}.
\end{equation*}
By a direct computation,
\begin{equation}
	\left\langle \dfrac{\partial X}{\partial t},\nu \right\rangle = \dfrac{1}{\sqrt{1+\rho_{\theta}^2\phi(\rho)^{-2}}}\dfrac{\partial\rho}{\partial t}.
\end{equation}
Therefore, we express the flow (\ref{equ-flow}) equivalently as the initial value problem:
\begin{equation}
    \left\lbrace\begin{array}{ll}
	\dfrac{\partial\rho}{\partial t} = \dfrac{\phi'(\rho)\sqrt{1+\rho_{\theta}^2\phi(\rho)^{-2}}-\phi(\rho)}{\kappa-1}-\phi(\rho),& (\theta,t)\in\mathbb{S}^1\times [0,T),\\
	\rho(\theta,0)=\rho_0(\theta).
	\end{array}\right.
	\label{equ-scalaflow}
\end{equation}
In the following, we  will use (\ref{equ-scalaflow}) to obtain some estimates.

\begin{proposition}[$C^0$ Estimate]
	\label{prop-c0est}
	Let $\rho(\cdot,t)\in\mathbb{S}^1\times[0,T)$ be the solution to \eqref{equ-scalaflow}, then
	\begin{equation}
		\min\limits_{\mathbb{S}^1}\rho(\cdot,0)\leqslant\rho(\cdot,t) \leqslant \max\limits_{\mathbb{S}^1}\rho(\cdot,0).
	\end{equation}
\end{proposition}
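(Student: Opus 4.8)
The plan is to run the parabolic maximum principle directly on the scalar height function $\rho(\cdot,t)$, evaluating the right-hand side of \eqref{equ-scalaflow} at the spatial maximum and minimum. The one fact I need upfront is that, by Proposition \ref{prop-curvature}, $\gamma_t$ stays strictly h-convex, so $\kappa>1$ (hence $\kappa-1>0$) along the whole flow; this guarantees \eqref{equ-scalaflow} is well-defined and, more importantly, lets me divide by $\kappa-1$ without flipping inequality signs. I also record the elementary identities $\phi=\sinh$, $\phi'=\cosh$, so that $\phi'(\rho)-\phi(\rho)=e^{-\rho}>0$ and $\phi'(\rho)/\phi(\rho)=\coth\rho$.

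For the upper bound, I would set $\rho_{\max}(t)=\max_{\theta\in\mathbb{S}^1}\rho(\theta,t)$ and work at a point $\theta_0$ realizing this maximum, where $\rho_\theta(\theta_0,t)=0$ and $\rho_{\theta\theta}(\theta_0,t)\leqslant 0$. Plugging $\rho_\theta=0$ into the curvature formula \eqref{equ-ka} collapses it to $\kappa=\frac{\phi'(\rho)}{\phi(\rho)}-\frac{\rho_{\theta\theta}}{\phi(\rho)^2}\geqslant \frac{\phi'(\rho)}{\phi(\rho)}$, so $\kappa-1\geqslant \frac{\phi'(\rho)-\phi(\rho)}{\phi(\rho)}$. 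Since $\phi'-\phi>0$, this yields
\[
\frac{\phi'(\rho)\sqrt{1+\rho_\theta^2\phi(\rho)^{-2}}-\phi(\rho)}{\kappa-1}
=\frac{\phi'(\rho)-\phi(\rho)}{\kappa-1}\leqslant \phi(\rho),
\]
so the full right-hand side of \eqref{equ-scalaflow} at $\theta_0$ is $\leqslant \phi(\rho)-\phi(\rho)=0$. By Hamilton's trick (the Lipschitz function $\rho_{\max}(t)$ satisfies $\frac{d}{dt}\rho_{\max}\leqslant \partial_t\rho$ at an achieving point), $\rho_{\max}(t)$ is non-increasing, giving $\rho(\cdot,t)\leqslant \max_{\mathbb{S}^1}\rho(\cdot,0)$.

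The lower bound is the mirror image: at a point $\theta_1$ realizing $\rho_{\min}(t)=\min_{\theta}\rho(\theta,t)$ one has $\rho_\theta=0$ and $\rho_{\theta\theta}\geqslant 0$, so $\kappa\leqslant \phi'(\rho)/\phi(\rho)$ and $0<\kappa-1\leqslant \frac{\phi'(\rho)-\phi(\rho)}{\phi(\rho)}$; dividing the positive quantity $\phi'(\rho)-\phi(\rho)$ by the smaller positive number $\kappa-1$ reverses the estimate to $\frac{\phi'(\rho)-\phi(\rho)}{\kappa-1}\geqslant \phi(\rho)$, whence the right-hand side of \eqref{equ-scalaflow} is $\geqslant 0$ and $\rho_{\min}(t)$ is non-decreasing. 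I do not expect a serious obstacle here; the only points requiring care are the bookkeeping that strict h-convexity keeps $\kappa-1$ strictly positive at the minimum (so the division preserves the inequality direction) and the standard justification of differentiating $\rho_{\max}$ and $\rho_{\min}$ via the maximum principle at the spatial extrema.
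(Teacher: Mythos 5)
Your proof is correct and follows essentially the same route as the paper: evaluate \eqref{equ-scalaflow} at spatial extrema where $\rho_\theta=0$, compare $\kappa$ with $\phi'(\rho)/\phi(\rho)$, and conclude via the maximum principle that $\rho_{\max}$ is non-increasing and $\rho_{\min}$ non-decreasing. The only difference is that you spell out details the paper leaves implicit — the derivation of $\kappa \geqslant \phi'/\phi$ (resp. $\leqslant$) from \eqref{equ-ka} using the sign of $\rho_{\theta\theta}$, and the appeal to Proposition \ref{prop-curvature} to keep $\kappa-1>0$ — which is a welcome but inessential refinement.
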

\begin{proof}
	At the spatial extremal point of $\rho$, we have $\rho_{\theta}=0$. The equation (\ref{equ-scalaflow}) then becomes
	\begin{equation}
		\dfrac{\partial\rho}{\partial t} = \dfrac{\phi'(\rho)-\phi(\rho)}{\kappa-1}-\phi(\rho).
	\end{equation}
	At the maximum point of $\rho$, i.e. $\rho(\theta,t)=\max\limits_{\mathbb{S}^1}\rho(\cdot,t)$, we have $\kappa\geqslant \dfrac{\phi'(\rho)}{\phi(\rho)}$. Thus
	\begin{eqnarray}
		\partial_t \rho_{\max}(t) \leqslant \dfrac{\phi'-\phi}{\frac{\phi'}{\phi}-1}-\phi =0.
	\end{eqnarray}
	At the minimum point of $\rho$, i.e. $\rho(\theta,t)=\min\limits_{\mathbb{S}^1}\rho(\cdot,t)$, we have $\kappa\leqslant \dfrac{\phi'(\rho)}{\phi(\rho)}$. Thus
	\begin{eqnarray}
		\partial_t \rho_{\min}(t) \geqslant \dfrac{\phi'-\phi}{\frac{\phi'}{\phi}-1}-\phi =0.
	\end{eqnarray}
    The conclusion follows by the standard maximum principle.
\end{proof}

\begin{proposition}[$C^1$ Estimate]
	Along the flow \eqref{equ-scalaflow}, there exists a constant $C>0$ depending only on $\rho_0$, such that 
	\begin{equation}
		\|\rho_{\theta}\|_{C^0(\mathbb{S}^1\times[0,T))}\leqslant C.
	\end{equation}
\end{proposition}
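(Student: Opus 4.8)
The plan is to control $\rho_\theta$ by a maximum principle applied to the gradient quantity $w:=\tfrac12\rho_\theta^2$ on $\mathbb{S}^1$, feeding in the $C^0$ bound (Proposition~\ref{prop-c0est}) and the two-sided curvature bound (Proposition~\ref{prop-curvature}). Writing the scalar flow \eqref{equ-scalaflow} as $\partial_t\rho=F(\rho,\rho_\theta,\rho_{\theta\theta})$ with $F=\frac{\phi'(\rho)\sqrt{1+\rho_\theta^2\phi(\rho)^{-2}}-\phi(\rho)}{\kappa-1}-\phi(\rho)$, I would first record that the flow is parabolic: since
\[
\frac{\partial\kappa}{\partial\rho_{\theta\theta}}=-\frac{\phi(\rho)}{(\phi(\rho)^2+\rho_\theta^2)^{3/2}}<0,\qquad \frac{\partial F}{\partial\kappa}=-\frac{\phi'(\rho)v-\phi(\rho)}{(\kappa-1)^2}<0,
\]
where $v=\sqrt{1+\rho_\theta^2\phi(\rho)^{-2}}\ge1$, and where $\phi'>\phi$ always holds in $\mathbb{H}^2$ while $\kappa>1$ is preserved by strict h-convexity, the top-order coefficient $F_{\rho_{\theta\theta}}=F_\kappa\,\kappa_{\rho_{\theta\theta}}$ is strictly positive.

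Next I would differentiate the flow in $\theta$ and multiply by $\rho_\theta$. Using $w_\theta=\rho_\theta\rho_{\theta\theta}$ and $w_{\theta\theta}=\rho_{\theta\theta}^2+\rho_\theta\rho_{\theta\theta\theta}$ to eliminate the third-order term yields
\[
\partial_t w = F_{\rho_{\theta\theta}}\,w_{\theta\theta}+F_{\rho_\theta}\,w_\theta + 2F_\rho\,w - F_{\rho_{\theta\theta}}\,\rho_{\theta\theta}^2 \le F_{\rho_{\theta\theta}}\,w_{\theta\theta}+F_{\rho_\theta}\,w_\theta+2F_\rho\,w,
\]
where $F_\rho$ is the partial derivative of $F$ in its explicit $\rho$-slot; the dropped term is nonpositive because $F_{\rho_{\theta\theta}}>0$. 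This is a parabolic differential inequality for $w$ with reaction coefficient $2F_\rho$, so everything reduces to estimating $F_\rho$ at an interior spatial maximum of $w$, where in addition $\rho_{\theta\theta}=0$.

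The main obstacle is turning this reaction term into a \emph{time-uniform} bound rather than one growing exponentially in $t$. Estimating $|F_\rho|$ crudely via Propositions~\ref{prop-c0est} and~\ref{prop-curvature} only gives $\tfrac{d}{dt}\max w\le C\max w$, hence $\max w\le w|_{t=0}\,e^{Ct}$; this already suffices to continue the flow, but to reach the stated constant depending only on $\rho_0$ I would instead exploit the favorable sign of $F_\rho$ at points of large gradient, or argue geometrically. The geometric route is cleanest: strict h-convexity makes each $\gamma_t$ geodesically convex, and, being star-shaped about the origin with $\min_{\mathbb{S}^1}\rho_0\le\rho\le\max_{\mathbb{S}^1}\rho_0$, it contains the geodesic ball $B_{\min\rho_0}$ and lies inside $B_{\max\rho_0}$. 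The hyperbolic analogue of the support-function estimate then bounds the angle $\alpha$ between $\partial_r$ and $\nu$ away from $\tfrac\pi2$, i.e.\ gives a positive lower bound for $u=\phi(\rho)\cos\alpha$ in terms of $\rho_0$ alone; since $|\rho_\theta|=\phi(\rho)\sqrt{(\phi(\rho)/u)^2-1}$, this is exactly the desired $C^1$ estimate. Because both h-convexity and the two-sided $C^0$ bounds are preserved along the flow, the resulting constant depends only on $\rho_0$.
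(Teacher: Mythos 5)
Your geometric route is correct and ultimately rests on the same key estimate as the paper: a positive lower bound for the support function $u$ depending only on $\rho_0$, converted into a gradient bound via \eqref{equ-support} (your identity $|\rho_\theta|=\phi(\rho)\sqrt{(\phi(\rho)/u)^2-1}$ is exactly the paper's $\phi(\rho)/u=\sqrt{1+\rho_\theta^2\phi(\rho)^{-2}}\leqslant \phi(\max\rho_0)/\phi(\min\rho_0)$). Where you differ is in how that lower bound on $u$ is obtained. The paper does it with a one-line local argument: at the spatial minimum of $u$ on $\gamma_t$ one has $\partial u/\partial s=\kappa\langle V,T\rangle=0$, and since $\kappa>1$ (Proposition \ref{prop-curvature}) this forces $\langle V,T\rangle=0$, so $\nu$ is radial there and $u=\phi(\rho)\geqslant\phi(\min_{\mathbb{S}^1}\rho_0)$ by Proposition \ref{prop-c0est}. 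You instead invoke a global synthetic fact: geodesic convexity of the enclosed domain (which $\kappa>1$ guarantees), the inscribed ball $B_{\min\rho_0}$ coming from the $C^0$ bound and the graph property, and the hyperbolic support estimate bounding the angle between $\partial_r$ and $\nu$. This fact is true --- for a convex curve, $u=\sinh\big(\mathrm{dist}(o,\ell_p)\big)$ where $\ell_p$ is the tangent geodesic at $p$, which avoids the inscribed ball, so $u\geqslant\phi(\min\rho_0)$ --- but it is the one step you leave unproven and uncited; it needs either a short law-of-sines computation as above or a reference, whereas the paper's critical-point argument is self-contained given the Frenet formulas \eqref{equ-Phi'} already established. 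Finally, your first attempt (maximum principle on $w=\tfrac12\rho_\theta^2$) is honestly flagged as yielding only an exponential-in-time bound and is then abandoned, so it does not affect correctness; the paper avoids this dead end entirely by never differentiating the equation.
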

\begin{proof}
	1. Firstly, we prove that star-shapedness preserves along the flow (\ref{equ-flow}). For any $t\in[0,T)$, at the spatial minimum point $P$ of $u$, we have
	\begin{equation}
		\dfrac{\partial u}{\partial s}=\kappa\langle V,T \rangle=0.
	\end{equation}
	Since $\gamma_t$ is strictly h-convex, that is $\kappa>1$, then $\langle V,T \rangle=0$. This implies that $\dfrac{\partial}{\partial\rho}$ is parallel to the unit outward normal vector $\nu$ at $P$. Hence
	\begin{equation}
		u(P)=\langle V,\nu \rangle (P) =\phi(\rho)|_{P}.
	\end{equation}
	By Proposition \ref{prop-c0est}, we have
	\begin{equation}
		u \geqslant \min_{\gamma_t}\phi(\rho)>0. \label{equ-lowsupp}
	\end{equation}
	
	2. Combining (\ref{equ-support}) and (\ref{equ-lowsupp}), we get
	\begin{equation}
		\dfrac{|\rho_{\theta}|}{\phi(\rho)} \leqslant \sqrt{1+\rho_{\theta}^2\phi(\rho)^{-2}}\leqslant \dfrac{\max\limits_{\gamma_t}\phi(\rho)}{\min\limits_{\gamma_t}\phi(\rho)} \leqslant \dfrac{\max\limits_{\gamma_0}\phi}{\min\limits_{\gamma_0}\phi},
	\end{equation}
	where we have used that $\phi(\rho)=\sinh\rho$ is monotonically increasing in the last inequality. Therefore,
	\begin{equation}
		|\rho_{\theta}| \leqslant \dfrac{\max\limits_{\gamma_0}\phi^2}{\min\limits_{\gamma_0}\phi}:=C,
	\end{equation}
	where the constant $C$ depends only on $\rho_0$.
\end{proof}

\subsection{Long time existence}
By the above a priori estimates, we know that if $\rho$ is a positive admissible solution to the flow  (\ref{equ-scalaflow}) in $\mathbb{S}^1\times[0,T)$, then by the expression (\ref{equ-ka}) of $\kappa$, we have
\begin{equation}
	\|\rho\|_{C^2(\mathbb{S}^1\times [0,T))}\leqslant C,
\end{equation}
where $C$ depends only on $\rho_0$. By Krylov and Safanov's theorem \cite[Section 5.5]{nonlinearparabolic}, we get $\|\rho\|_{C^{2,\alpha}}\leqslant C$, where $C$ depends only on $\rho_0$. By Schauder's estimate, we get the uniform $C^k$ estimate of $\rho$ ($k\geqslant 2$). Therefore, the flow (\ref{equ-flow}) exists in $t\in(0,\infty)$.

\subsection{Smooth convergence}
We first obtain subsequential convergence.
\begin{lemma}
	There exists a sequence $\{t_j\}$ such that when $t_j\to\infty$, $Q(t_j)$ monotonically decreasing converges to 0, where
	\begin{equation}
		Q(t) = \int_{\gamma_t} \left( \dfrac{\phi'(r)-u}{\kappa -1} - u \right) \mathrm{d}s,
	\end{equation}
	and $\gamma_{t_j}$ converges smoothly to a geodesic circle centered at the origin.
\end{lemma}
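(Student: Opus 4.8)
The plan is to run the Heintze--Karcher inequality \eqref{equ-heintze-karcher-type} as a monotonicity mechanism for the enclosed area. Since the speed of \eqref{equ-flow} is exactly $\eta=\frac{\phi'(r)-u}{\kappa-1}-u$, the area evolution \eqref{equ-volumeevolution} gives $\frac{\mathrm{d}}{\mathrm{d}t}A[\gamma_t]=\int_{\gamma_t}\eta\,\mathrm{d}s=Q(t)$, and \eqref{equ-heintze-karcher-type} forces $Q(t)\geqslant 0$. Thus $A[\gamma_t]$ is nondecreasing, while the $C^0$ estimate (Proposition \ref{prop-c0est}) confines every $\gamma_t$ to a fixed geodesic ball and hence bounds $A[\gamma_t]$ from above. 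Therefore $A[\gamma_t]\to A_\infty<\infty$ and $\int_0^\infty Q(t)\,\mathrm{d}t=A_\infty-A[\gamma_0]<\infty$, so $\liminf_{t\to\infty}Q(t)=0$; choosing times greedily yields $t_j\to\infty$ along which $Q(t_j)$ decreases to $0$. Combined with the uniform $C^k$ bounds from the long-time existence step, Arzel\`a--Ascoli and a diagonal argument extract a subsequence along which $\gamma_{t_j}$ converges in $C^\infty$ to a smooth strictly h-convex limit $\gamma_\infty$; passing to the limit gives $Q[\gamma_\infty]=0$, and the equality case of \eqref{equ-heintze-karcher-type} identifies $\gamma_\infty$ as a geodesic circle, whose radius $R_\infty$ is fixed by the preserved quantity $L-A$ (Lemma \ref{lemma-la}).

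The main obstacle is to show that this limit circle is centered at the origin, and this does \emph{not} follow from the equality case alone. Using the Minkowski identity \eqref{equ-curvemink} together with $\kappa\equiv\coth R$ one checks that $Q=0$ for \emph{every} geodesic circle regardless of its center, so \eqref{equ-heintze-karcher-type} is blind to the center. Moreover, the class of geodesic circles is preserved by \eqref{equ-flow}: the maximum principle argument of Proposition \ref{prop-curvature} shows that constant curvature persists, so the flow issuing from a circle stays a circle of the same radius whose center merely drifts. I would therefore locate the center by a Lyapunov functional for this restricted motion. A direct computation using \eqref{equ-ptPhi}, \eqref{equ-areaevolution} and \eqref{equ-Phi'} (integrating by parts and using $\partial_s u=\kappa\langle V,T\rangle=\kappa\,\partial_s\phi'$) shows that, whenever $\gamma_t$ is a geodesic circle, $G(\gamma_t):=\int_{\gamma_t}\phi'\,\mathrm{d}s$ evolves by $\frac{\mathrm{d}}{\mathrm{d}t}G(\gamma_t)=-\frac{2\kappa}{\kappa-1}\int_{\gamma_t}\langle V,T\rangle^2\,\mathrm{d}s\leqslant 0$, with equality precisely when $\langle V,T\rangle\equiv 0$, i.e. when $\cosh r$ is constant on the curve, i.e. when the circle is centered at the origin.

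Running this, I would first flow from $\gamma_\infty$: the resulting family $\hat\gamma_\tau$ consists of circles of radius $R_\infty$, and since $G(\hat\gamma_\tau)$ is strictly decreasing and bounded below we get $\int_{\hat\gamma_\tau}\langle V,T\rangle^2\,\mathrm{d}s\to 0$, forcing $\hat\gamma_\tau$ to converge as $\tau\to\infty$ to the geodesic circle $\gamma^{*}$ of radius $R_\infty$ centered at the origin. I would then transfer this back to the original flow by continuous dependence on initial data: for each fixed $\tau$ the smooth convergence $\gamma_{t_j}\to\gamma_\infty$ propagates to $\gamma_{t_j+\tau}\to\hat\gamma_\tau$, so a diagonal choice of times $\sigma_m=t_{j(m)}+\tau_m$ produces a sequence with $\sigma_m\to\infty$ along which $\gamma_{\sigma_m}$ converges smoothly to $\gamma^{*}$ and $Q(\sigma_m)\to Q[\gamma^{*}]=0$; passing to a further subsequence makes $Q$ monotonically decreasing, as required.

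I expect the two delicate points to be exactly the strict Lyapunov computation and the continuous-dependence/diagonal bridge. The area monotonicity handles existence of the subsequence and the circular shape of the limit cleanly, but because the area sees every geodesic circle as extremal, pinning the center genuinely requires a separate mechanism; isolating $G=\int_{\gamma}\phi'\,\mathrm{d}s$ as the quantity whose restricted gradient flow detects $\langle V,T\rangle$, and then passing this information to the full flow, is where the real work lies.
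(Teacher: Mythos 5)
Your proposal is correct, and its first half coincides with the paper's proof: both use $A'(t)=Q(t)\geqslant 0$ from the Heintze--Karcher inequality \eqref{equ-heintze-karcher-type}, the $C^0$ estimate to get $Q\in L^1(0,\infty)$, Arzel\`a--Ascoli with the uniform estimates for smooth subconvergence, and the rigidity case of \eqref{equ-heintze-karcher-type} to identify the limit as a geodesic circle. Where you genuinely diverge is the centering step, and you are right that this needs a separate mechanism, since $Q$ vanishes on \emph{every} geodesic circle by \eqref{equ-curvemink}. The paper disposes of it in two lines: writing the flow as the scalar equation \eqref{equ-scalaflow}, if $\gamma_\infty$ were a circle of radius $\rho_\infty$ not centered at the origin, then at the point where $\rho$ is maximal the speed equals $\phi(\rho_\infty-\rho)/\bigl(\phi'(\rho_\infty)-\phi(\rho_\infty)\bigr)<0$, contradicting that $\gamma_\infty$ is a stationary solution. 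Your route instead shows circles are preserved (constant curvature persists by the maximum principle of Proposition \ref{prop-curvature}), exhibits $G=\int_{\gamma}\phi'\,\mathrm{d}s$ as a strict Lyapunov functional on circle solutions, and transfers the re-centering back by continuous dependence plus a diagonal argument; your key identity checks out, since on a circle of radius $R$ whose center is at distance $d$ from the origin one computes $G=2\pi\sinh R\cosh R\cosh d$, $\int_{\gamma}\langle V,T\rangle^2\,\mathrm{d}s=\pi\sinh R\sinh^2 d$, and the center drifts by $\dot d=-e^{R}\sinh d$, so indeed $\tfrac{\mathrm{d}}{\mathrm{d}t}G=-\tfrac{2\kappa}{\kappa-1}\int_{\gamma}\langle V,T\rangle^2\,\mathrm{d}s\leqslant 0$ with equality exactly at the centered circle. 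What each approach buys: the paper's contradiction is far shorter, but as written it presupposes that the subsequential limit is a stationary solution of the flow, which itself requires exactly the kind of continuous-dependence/monotone-quantity argument (e.g.\ $\rho_{\max}(t)$ is non-increasing and already convergent, so it cannot drop further) that you make explicit; your version is longer but self-contained on this point. Note finally that your construction yields a possibly different sequence of times than the one first extracted -- you never show the original $\gamma_\infty$ is centered -- which suffices for the lemma as stated, whereas the paper's argument pins down the originally extracted limit itself, a slightly stronger conclusion that is convenient in the subsequent full-convergence proposition.
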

\begin{proof}
	1. Existence of $\gamma_{\infty}$. Note that $A'(t)=Q(t)$, then
	\begin{equation}
		\int_0^t Q(\tau) \mathrm{d}\tau = A(t) - A(0).
	\end{equation}
	By Heintze-Karcher type inequality \eqref{equ-heintze-karcher-type}, we know $Q(t)\geqslant 0$. By Proposition \ref{prop-c0est}, $A(t)\leqslant 2\pi \Phi(\max\limits_{\mathbb{S}^1} \rho_0)$. Thus $Q(t)$ is a nonnegative  $L^1(0,\infty)$ function. Then by Arzel\`a-Ascoli theorem and uniform estimates, there exists $\{t_j\}$ such that $\gamma_{t_j}$ converges smoothly to $\gamma_{\infty}$, and the following holds:
	\begin{equation}
		\int_{\gamma_{\infty}} \left( \dfrac{\phi'-u}{\kappa-1}-u \right) \mathrm{d}s =0.
	\end{equation}
	By the rigidity of Heintze-Karcher type inequality proved in \cite[Proposition 8.2]{Li2022Hyperbolic}, we know that $\gamma_{\infty}$ is a geodesic circle.
	
	2. We prove that $\gamma_{\infty}$ is a geodesic circle centered at the origin. Let $\rho_{\infty}$ be the radius of $\gamma_{\infty}$, and its curvature is $\dfrac{\phi'(\rho_{\infty})}{\phi(\rho_{\infty})}$. We argue by contradiction. Suppose $\gamma_{\infty}$ is not centered at the origin, then at the spatial point where $\rho$ attains its maximum of $\theta$,
	\begin{equation}
		\dfrac{\partial\rho}{\partial t} = \dfrac{\phi'(\rho)-\phi(\rho)}{\dfrac{\phi'(\rho_{\infty})}{\phi(\rho_{\infty})}-1}-\phi(\rho) = \dfrac{\phi(\rho_{\infty}-\rho)}{\phi'(\rho_{\infty})-\phi(\rho_{\infty})}<0,
	\end{equation}
	which contradicts that $\gamma_{\infty}$ is the stationary solution to the flow (\ref{equ-flow}).
\end{proof}

Then we get smooth convergence.
\begin{proposition}
	There exists $\rho_{\infty}>0$ such that along the flow \eqref{equ-scalaflow}, when $t\to\infty$, there holds $\|\rho(\cdot,t)-\rho_{\infty}\|_{C^k(\mathbb{S}^1)}\to 0$, $\forall\ k\in\mathbb{N}$.
\end{proposition}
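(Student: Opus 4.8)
The plan is to upgrade the subsequential convergence just obtained into convergence of the full flow, and the key observation is that the extremal radii are monotone in time. First I would note that the proof of the \(C^0\) estimate (Proposition \ref{prop-c0est}) in fact yields more than boundedness: at a spatial maximum point of \(\rho(\cdot,t)\) it gives \(\partial_t\rho_{\max}(t)\le 0\), and at a spatial minimum point \(\partial_t\rho_{\min}(t)\ge 0\). Hence \(\rho_{\max}(t):=\max_{\mathbb{S}^1}\rho(\cdot,t)\) is non-increasing and \(\rho_{\min}(t):=\min_{\mathbb{S}^1}\rho(\cdot,t)\) is non-decreasing; both are bounded by Proposition \ref{prop-c0est}, so both converge as \(t\to\infty\), say \(\rho_{\max}(t)\to R_+\) and \(\rho_{\min}(t)\to R_-\) with \(R_-\le R_+\).

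Next I would identify these two limits using the subsequence from the preceding lemma. Along \(\{t_j\}\) the curve \(\gamma_{t_j}\) converges smoothly to the geodesic circle \(\gamma_\infty\) of radius \(\rho_\infty\) centered at the origin, whose radial graph function is the constant \(\rho_\infty\); therefore \(\rho(\cdot,t_j)\to\rho_\infty\) uniformly on \(\mathbb{S}^1\), and in particular \(\rho_{\max}(t_j)\to\rho_\infty\) and \(\rho_{\min}(t_j)\to\rho_\infty\). Since the full-time limits \(R_\pm\) already exist by the monotonicity above, restricting to this subsequence forces \(R_+=R_-=\rho_\infty\). Consequently \(\rho_{\max}(t)\to\rho_\infty\) and \(\rho_{\min}(t)\to\rho_\infty\) for the entire flow, so the oscillation \(\rho_{\max}(t)-\rho_{\min}(t)\to 0\), which already gives the \(C^0\)-convergence \(\|\rho(\cdot,t)-\rho_\infty\|_{C^0(\mathbb{S}^1)}\to 0\) of the whole family.

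To promote this to \(C^k\)-convergence for every \(k\), I would invoke the uniform higher-order estimates established in the long-time existence step, which bound \(\|\rho(\cdot,t)\|_{C^m(\mathbb{S}^1)}\) uniformly in \(t\) for every \(m\). Combining these with the \(C^0\)-decay through the interpolation inequalities \(\|\rho-\rho_\infty\|_{C^k}\le C\,\|\rho-\rho_\infty\|_{C^0}^{\,1-\theta}\,\|\rho-\rho_\infty\|_{C^{m}}^{\,\theta}\), valid for suitable \(m>k\) and \(\theta\in(0,1)\), yields \(\|\rho(\cdot,t)-\rho_\infty\|_{C^k(\mathbb{S}^1)}\to 0\). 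Equivalently, one may argue by precompactness: the family \(\{\rho(\cdot,t)\}\) is precompact in \(C^k\) by the uniform bounds, and since its \(C^0\)-limit is the constant \(\rho_\infty\), every \(C^k\)-subsequential limit must equal \(\rho_\infty\), whence the whole family converges in \(C^k\).

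These steps are individually routine given the earlier a priori estimates; the point I regard as the crux is extracting the time-monotonicity of \(\rho_{\max}\) and \(\rho_{\min}\) from the \(C^0\) estimate and matching their limits to the radius \(\rho_\infty\) of the subsequential limit circle. Once this matching is in place, the monotonicity sidesteps the need to control \(Q(t)\) uniformly along all times: subsequential smooth convergence together with monotone extremal radii already pins down the full \(C^0\)-limit, and interpolation against the uniform higher-order bounds does the rest.
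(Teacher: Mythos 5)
Your proposal is correct and follows essentially the same route as the paper: the paper likewise combines the monotonicity of $\rho_{\max}$ and $\rho_{\min}$ (from the proof of the $C^0$ estimate) with the subsequential smooth convergence to obtain full $C^0$-convergence to $\rho_{\infty}$, and then upgrades to $C^k$ by interpolating against the uniform higher-order bounds (the paper via the $L^2$ Gagliardo--Nirenberg inequality plus Sobolev embedding, you via the equivalent $C^0$--$C^m$ interpolation or a compactness argument). Your explicit matching of the monotone limits $R_{\pm}$ with $\rho_{\infty}$ along the subsequence is exactly the step the paper leaves implicit.
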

\begin{proof}
	We have proved that there exist $t_j\to\infty$ and $\rho_{\infty}$ such that $\|\rho(\cdot,t_j)-\rho_{\infty}\|_{C^k(\mathbb{S}^1)}\to 0,\ \forall\ k\in\mathbb{N}$. Since $\rho_{\max}(t)=\max\limits_{\mathbb{S}^1}\rho(\theta,t)$ is monotonically non-increasing, and $\rho_{\min}(t)=\min\limits_{\mathbb{S}^1}\rho(\theta,t)$ is monotonically non-decreasing, we have $\|\rho(\cdot,t)-\rho_{\infty}\|_{C^0(\mathbb{S}^1)}\to 0\ (t\to\infty)$.
	
	We recall the special case of Gagliardo-Nirenberg interpolation inequality \cite{Nirenberg1966AnEI}: For any positive integer $j$ and $k$, where $j<k$, and let $\delta=\frac{j}{k}$, there exists a constant $C(j,k)$, such that for any $f\in W^{k,2}(\mathbb{S}^1)$,
	\begin{equation}
		\left\|\dfrac{\partial^j}{\partial\theta^j} f\right\|_{L^2(\mathbb{S}^1)} \leqslant \left\|\dfrac{\partial^k}{\partial\theta^k} f\right\|_{L^2(\mathbb{S}^1)}^{\delta}\|f\|_{L^2(\mathbb{S}^1)}^{1-\delta}.
		\label{equ-gagli}
	\end{equation}
	Choose $f=\rho(\theta,t)-\rho_{\infty}$, we have
	\begin{equation}
		\left\|\dfrac{\partial^j}{\partial\theta^j} \rho(\theta,t)\right\|_{L^2(\mathbb{S}^1)} \leqslant \left\|\dfrac{\partial^k}{\partial\theta^k} \rho(\theta,t)\right\|_{L^2(\mathbb{S}^1)}^{\delta}\|\rho(\theta,t)-\rho_{\infty}\|_{L^2(\mathbb{S}^1)}^{1-\delta}.
	\end{equation}
	For any positive integer $j$, and any $\varepsilon>0$, for a large enough time $t$, by the boundedness of $\|\rho\|_{C^k(\mathbb{S}^1)}$ and $\|\rho(\cdot,t)-\rho_{\infty}\|_{C^0(\mathbb{S}^1)}<\varepsilon$, we know that $\left\| \dfrac{\partial^j}{\partial\theta^j} \rho(\theta,t) \right\|_{L^2(\mathbb{S}^1)}<C\varepsilon$. Then Sobolev embedding theorem implies that $\|\rho(\cdot,t)-\rho_{\infty}\|_{C^{j-1}(\mathbb{S}^1)}<C\varepsilon$. This shows that $\rho(\cdot,t)$ converges smoothly to $\rho_{\infty}$ when $t\to\infty$.
\end{proof}

\subsection{Exponential convergence}
We have shown that when $t\to\infty$, $\gamma_t$ converges smoothly to a geodesic circle centered at the origin with radius $\rho_{\infty}$. Therefore, for each large enough time $t$, $\gamma_t$ lies in the neighborhood of $\gamma_{\infty}$. We consider the linearization of flow (\ref{equ-scalaflow}) at $\rho=\rho_{\infty}$ (See \cite[Lemma 4.3]{twodim2022}). Let
\begin{equation}
	\sigma(\theta,t)=\rho(\theta,t)-\rho_{\infty},
\end{equation}
then as $t\to\infty$, $\sigma(\theta,t)\to 0$. Denote $G[\rho(\theta,t)]$ as the right hand side of (\ref{equ-scalaflow}), then
\begin{eqnarray}
	\dfrac{\partial}{\partial t} \sigma(\theta,t) &=& \dfrac{\partial}{\partial t}(\rho(\theta,t)-\rho_{\infty}) = G[\rho(\theta,t)] \notag\\
	&=& G[\rho_{\infty}] + DG|_{\rho_{\infty}}(\sigma(\theta,t)) + O\left(\|\sigma(\theta,t)\|_{C^2(\mathbb{S}^1)}^2\right) \notag\\
	&=& \dfrac{1}{\phi'(\rho_{\infty})-\phi(\rho_{\infty})}\dfrac{\partial^2}{\partial\theta^2}\sigma(\theta,t) + O\left(\|\sigma(\theta,t)\|_{C^2(\mathbb{S}^1)}^2\right).
\end{eqnarray}

Consider the eigenfunction of laplacian on $\mathbb{S}^1$ (see also \cite[Section 9]{Wei2022AFN}), i.e.
\begin{equation}
	\dfrac{\partial^2 \phi_k(\theta)}{\partial\theta^2} = -\lambda_k\phi_k(\theta),
\end{equation}
where $0=\lambda_0<\lambda_1\leqslant \cdots \leqslant \lambda_k \to \infty$, $\theta\in\mathbb{S}^1$, and $\{\phi_k(\theta)\}$ constitutes a set of orthonormal basis in $L^2(\mathbb{S}^1)$. Let
\begin{equation}
	\sigma(\theta,t) = \sum\limits_{k=0}^{\infty} \varphi_k(\theta,t),
\end{equation}
where $\varphi_k(\theta,t)=f_k(t)\phi_k(\theta)$, $f_k(t)\in C^{\infty}(I)$, then
\begin{equation}
	\varphi_0 = \dfrac{1}{2\pi}\int_{\mathbb{S}^1} \sigma(\theta,t) \mathrm{d}\theta.
\end{equation}
Therefore,
\begin{eqnarray}
	\dfrac{\partial}{\partial t}\int_{\mathbb{S}^1} \sigma(\theta,t)^2 \mathrm{d}\theta &=& \int_{\mathbb{S}^1} 2\sigma(\theta,t)\dfrac{\partial}{\partial t}\sigma(\theta,t) \mathrm{d}\theta\notag\\
	&\leqslant& \dfrac{2}{\phi'(\rho_{\infty})-\phi(\rho_{\infty})} \int_{\mathbb{S}^1} \sigma(\theta,t) \dfrac{\partial^2}{\partial\theta^2} \sigma(\theta,t) \mathrm{d}\theta + C\|\sigma(\theta,t)\|_{C^2(\mathbb{S}^1)}^2 \int_{\mathbb{S}^1} |\sigma(\theta,t)|\mathrm{d}\theta\notag\\
	&=&\dfrac{2}{\phi'(\rho_{\infty})-\phi(\rho_{\infty})}\int_{\mathbb{S}^1} -\sum\limits_{k\geqslant 1} \lambda_k\varphi_k^2(\theta,t)\mathrm{d}\theta + C \|\sigma(\theta,t)\|^2_{C^2(\mathbb{S}^1)}\int_{\mathbb{S}^1}|\sigma(\theta,t)|\mathrm{d}\theta \notag\\
	&=& -\dfrac{2}{\phi'(\rho_{\infty})-\phi(\rho_{\infty})}\int_{\mathbb{S}^1} \sum\limits_{k\geqslant 1} \lambda_1 \varphi_k^2(\theta,t) \mathrm{d}\theta \notag\\
	&&- \dfrac{2}{\phi'(\rho_{\infty})-\phi(\rho_{\infty})}\int_{\mathbb{S}^1} \sum\limits_{k\geqslant 1} (\lambda_k-\lambda_1)\varphi_k^2(\theta,t) \mathrm{d}\theta \notag\\
	&&+C \|\sigma(\theta,t)\|^2_{C^2(\mathbb{S}^1)}\int_{\mathbb{S}^1}|\sigma(\theta,t)|\mathrm{d}\theta \notag\\
	&\leqslant& -\dfrac{2\lambda_1}{\phi'(\rho_{\infty})-\phi(\rho_{\infty})}\int_{\mathbb{S}^1} \sum\limits_{k\geqslant 0} \varphi_k^2(\theta,t) \mathrm{d}\theta + \dfrac{2\lambda_1}{\phi'(\rho_{\infty})-\phi(\rho_{\infty})}\int_{\mathbb{S}^1} \varphi_0^2(\theta,t) \mathrm{d}\theta \notag\\
	&&+C\|\sigma(\theta,t)\|^2_{C^2(\mathbb{S}^1)}\int_{\mathbb{S}^1}|\sigma(\theta,t)|\mathrm{d}\theta \notag\\
	&=& -\dfrac{2\lambda_1}{\phi'(\rho_{\infty})-\phi(\rho_{\infty})}\int_{\mathbb{S}^1} \sigma(\theta,t)^2 \mathrm{d}\theta + \underbrace{ \dfrac{\lambda_1}{\pi \left(\phi'(\rho_{\infty})-\phi(\rho_{\infty})\right)}\left( \int_{\mathbb{S}^1}\sigma(\theta,t) \mathrm{d}\theta \right)^2 }_{I}\notag\\
	&&+ \underbrace{ C\|\sigma(\theta,t)\|^2_{C^2(\mathbb{S}^1)}\int_{\mathbb{S}^1}|\sigma(\theta,t)|\mathrm{d}\theta}_{II}. \label{equ-dtsigma}
\end{eqnarray}

By Lemma \ref{lemma-la}, we know that $[L-A](\rho(\theta,t))=[L-A](\rho_{\infty})$. We claim that: There exists a constant $C$ such that
\begin{equation}
	\left( \int_{\mathbb{S}^1}\sigma(\theta,t) \mathrm{d}\theta \right)^2 \leqslant C \left( \int_{\mathbb{S}^1} \|\sigma(\theta,t)\|^2_{C^2(\mathbb{S}^1)}  \mathrm{d}\theta \right)^2 \leqslant C\|\sigma(\theta,t)\|_{C^2(\mathbb{S}^1)}^4. \label{equ-integralsigma2}
\end{equation}
Indeed, we use the Taylor expansion of $[L-A](\rho(\theta,t))=[L-A](\sigma(\theta,t)+\rho_{\infty})$ at $\rho_{\infty}$ to obtain
\begin{equation}
	[L-A](\rho(\theta,t)) = [L-A](\rho_{\infty}) + \left( \phi'(\rho_{\infty})-\phi(\rho_{\infty}) \right)\int_{\mathbb{S}^1} \sigma(\theta,t) \mathrm{d}\theta + \int_{\mathbb{S}^1} O\left( \|\sigma(\theta,t)\|_{C^2(\mathbb{S}^1)}^2 \right) \mathrm{d}\theta.
\end{equation}
This implies
\begin{equation}
	\left( \phi'(\rho_{\infty})-\phi(\rho_{\infty}) \right)\int_{\mathbb{S}^1} \sigma(\theta,t) \mathrm{d}\theta + \int_{\mathbb{S}^1} O\left( \|\sigma(\theta,t)\|_{C^2(\mathbb{S}^1)}^2 \right) \mathrm{d}\theta = 0,
\end{equation}
and thus there exists a constant $C$ such that
\begin{equation*}
	\left( \int_{\mathbb{S}^1}\sigma(\theta,t) \mathrm{d}\theta \right)^2 \leqslant C \left( \int_{\mathbb{S}^1} \|\sigma(\theta,t)\|^2_{C^2(\mathbb{S}^1)}  \mathrm{d}\theta \right)^2 \leqslant C\|\sigma(\theta,t)\|_{C^2(\mathbb{S}^1)}^4.
\end{equation*}

In the following, we will choose $f=\sigma(\theta,t)$ in Gagliardo-Nirenberg interpolation inequality (\ref{equ-gagli}) to estimate $\|\sigma(\theta,t)\|_{C^2(\mathbb{S}^1)}^2$. For any positive integer $j,\ k$ satisfying $j<k$, let $\delta=\frac{j}{k}$. There exists a constant $C(j,k)$ such that
\begin{equation}
	\left\| \dfrac{\partial^j}{\partial\theta^j} \sigma(\theta,t) \right\|_{L^2(\mathbb{S}^1)} \leqslant \left\| \dfrac{\partial^k}{\partial\theta^k} \sigma(\theta,t) \right\|_{L^2(\mathbb{S}^1)}^{\delta} \|\sigma(\theta,t)\|_{L^2(\mathbb{S}^1)}^{1-\delta}.  \label{equ-sigmaf}
\end{equation}
By Sobolev embedding theorem, we have
\begin{equation}
	\|\sigma(\theta,t)\|_{C^2(\mathbb{S}^1)} \leqslant C \|\sigma(\theta,t)\|_{W^{j,2}(\mathbb{S}^1)},\ \ j\geqslant 3.  \label{equ-sigmasobo}
\end{equation}
Combining (\ref{equ-sigmaf}) and (\ref{equ-sigmasobo}), taking $k$ large enough, we obtain that $\forall\ 0<\varepsilon<1$, there exists a positive constant $C(\varepsilon)$ such that
\begin{equation}
	\|\sigma(\theta,t)\|_{C^2(\mathbb{S}^1)}^2 \leqslant C(\varepsilon) \|\sigma(\theta,t)\|_{L^2(\mathbb{S}^1)}^{1+\varepsilon}.
\end{equation}
Choose $\varepsilon=\dfrac{1}{2}$, we get
\begin{equation}
	\|\sigma(\theta,t)\|_{C^2(\mathbb{S}^1)}^2 \leqslant C \|\sigma(\theta,t)\|_{L^2(\mathbb{S}^1)}^{\frac{3}{2}}. \label{equ-sigma23}
\end{equation}

Now from (\ref{equ-integralsigma2}) and (\ref{equ-sigma23}), we know
\begin{equation}
	\left( \int_{\mathbb{S}^1} \sigma(\theta,t) \mathrm{d}\theta \right)^2 \leqslant C\|\sigma(\theta,t)\|_{L^2(\mathbb{S}^1)}^{3}=C\left( \int_{\mathbb{S}^1} \sigma(\theta,t)^2 \mathrm{d}\theta \right)^{1+\frac{1}{2}}. \label{equ-sigmgintegral}
\end{equation}
By (\ref{equ-sigma23}) and H\"{o}lder inequality, we obtain
\begin{eqnarray}
	\|\sigma(\theta,t)\|_{C^2(\mathbb{S}^1)}^2\int_{\mathbb{S}^1} |\sigma(\theta,t)|\mathrm{d}\theta \leqslant C\|\sigma(\theta,t)\|_{L^2(\mathbb{S}^1)}^{\frac{5}{2}}=C\left( \int_{\mathbb{S}^1} \sigma(\theta,t)^2 \mathrm{d}\theta \right)^{1+\frac{1}{4}}. \label{equ-sigmac2}
\end{eqnarray}
Substituting (\ref{equ-sigmgintegral}) and (\ref{equ-sigmac2}) into (\ref{equ-dtsigma}), we get
\begin{eqnarray}
	&&\dfrac{\partial}{\partial t} \int_{\mathbb{S}^1}\sigma(\theta,t)^2\mathrm{d}\theta\notag\\
	&=& -\dfrac{2\lambda_1}{\phi'(\rho_{\infty})-\phi(\rho_{\infty})}\int_{\mathbb{S}^1} \sigma(\theta,t)^2 \mathrm{d}\theta + C\left( \int_{\mathbb{S}^1} \sigma(\theta,t)^2 \mathrm{d}\theta \right)^{1+\frac{1}{2}} + C\left( \int_{\mathbb{S}^1} \sigma(\theta,t)^2 \mathrm{d}\theta \right)^{1+\frac{1}{4}}\notag\\
	&=& -\dfrac{2\lambda_1}{\phi'(\rho_{\infty})-\phi(\rho_{\infty})}\int_{\mathbb{S}^1} \sigma(\theta,t)^2 \mathrm{d}\theta \left( 1-C'\left( \int_{\mathbb{S}^1} \sigma(\theta,t)^2 \mathrm{d}\theta \right)^{\frac{1}{2}} -C''\left( \int_{\mathbb{S}^1} \sigma(\theta,t)^2 \mathrm{d}\theta \right)^{\frac{1}{4}} \right). \notag
\end{eqnarray}
Since $\int_{\mathbb{S}^1}\sigma(\theta,t)^2 \mathrm{d}\theta\to 0 \ (t\to\infty)$, for any $\delta>0$, there exists $t_{\delta}>0$ such that for any $t>t_{\delta}$,
\begin{eqnarray}
	\dfrac{\partial}{\partial t} \int_{\mathbb{S}^1}\sigma(\theta,t)^2 \mathrm{d}\theta \leqslant  -\dfrac{2\lambda_1}{\phi'(\rho_{\infty})-\phi(\rho_{\infty})}\int_{\mathbb{S}^1} \sigma(\theta,t)^2 \mathrm{d}\theta \left( 1-\delta \right).
\end{eqnarray}
Choose $\delta=\frac{1}{2}$, we obtain
\begin{equation}
	\int_{\mathbb{S}^1} \sigma(\theta,t)^2 \mathrm{d}\theta \leqslant C\mathrm{e}^{-\frac{\lambda_1}{\phi'(\rho_{\infty})-\phi(\rho_{\infty})}t}.
\end{equation}
By (\ref{equ-sigmaf}) and Sobolev embedding theorem:
\begin{equation}
	\|\sigma(\theta,t)\|_{C^{j-1}(\mathbb{S}^1)}\leqslant C\|\sigma(\theta,t)\|_{W^{j,2}(\mathbb{S}^1)},\ \ j\geqslant 1,
\end{equation}
we obtain that
\begin{equation}
	\|\sigma(\theta,t)\|_{C^k(\mathbb{S}^1)} \leqslant C_1\mathrm{e}^{-C_2 t},\ \ \forall\ k\geqslant 0,
\end{equation}
where $C_1,\ C_2$ depends only on $\rho_0,\ k$.

Above all, we complete the proof of Theorem \ref{thm-existenceandconvergence}.

\section{Geometric inequalities}
\label{section-4}
In this section, we will prove geometric inequalities involving weighted curvature integrals and modified quermassintegrals for h-convex domains.
\subsection{Inequalities for curves}
We will prove Theorem \ref{thm-inequality2dim} in this subsection.
\begin{lemma}
	\label{lemma-mono}
	Along the flow \eqref{equ-flow}, if the initial curve is strictly h-convex, then there holds
	\begin{equation}
		\dfrac{\partial}{\partial t} \int_{\gamma_t} \left( \Phi-u \right)(\kappa-1) \mathrm{d}s \leqslant 0 \label{equ-mono}
	\end{equation}
	with equality if and only if $\gamma_t$ is a geodesic circle centered at the origin.
\end{lemma}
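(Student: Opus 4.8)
The plan is to differentiate $\int_{\gamma_t}(\Phi-u)(\kappa-1)\,\mathrm{d}s$ directly and show the derivative equals a manifestly non-positive integral. Writing $\eta=\frac{\phi'-u}{\kappa-1}-u$ for the speed of \eqref{equ-flow}, the first thing I would record is the algebraic identity $\eta(\kappa-1)=\phi'-\kappa u$, which is exactly the quantity appearing in the Minkowski formula \eqref{equ-curvemink}. I would then assemble the evolution of the integrand from the general formulas: $\partial_t\Phi=\eta u$ and $\partial_t u=\eta\phi'-\eta_s\langle V,T\rangle$ from \eqref{equ-ptPhi}--\eqref{equ-ptu} (so that $\partial_t(\Phi-u)=-\eta(\phi'-u)+\eta_s\langle V,T\rangle$), the curvature evolution $\partial_t(\kappa-1)=-\eta_{ss}-(\kappa^2-1)\eta$ coming from \eqref{equ-ptek} with $n=k=1$, and the length-element evolution $\partial_t\,\mathrm{d}s=\eta\kappa\,\mathrm{d}s$ from \eqref{equ-areaevolution}.

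Applying the product rule I would split the resulting integrand into a part that is algebraic in $\eta$ and a part carrying the derivatives $\eta_s,\eta_{ss}$. For the algebraic part, collecting the coefficients and using $(\kappa^2-1)=(\kappa-1)(\kappa+1)$ together with $\phi'-u=(\Phi-u)+1$ should collapse everything to $-\eta(\kappa-1)\big(2(\Phi-u)+1\big)=-(\phi'-\kappa u)\big(2(\Phi-u)+1\big)$; the stray constant term then integrates to zero by \eqref{equ-curvemink}, leaving $-2\int_{\gamma_t}(\Phi-u)(\phi'-\kappa u)\,\mathrm{d}s$.

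The delicate part is the derivative terms $\int_{\gamma_t}\big(\eta_s\langle V,T\rangle(\kappa-1)-(\Phi-u)\eta_{ss}\big)\,\mathrm{d}s$, which I expect to cancel identically. Integrating by parts on the closed curve moves both derivatives off $\eta$, and using the curve identities from \eqref{equ-Phi'} --- namely $(\Phi-u)_s=-(\kappa-1)\langle V,T\rangle$ and $\langle V,T\rangle_s=\phi'-\kappa u$ --- the two surviving coefficients of $\eta$ turn out to be negatives of one another, so the whole contribution vanishes. This cancellation is the technical heart of the argument, since it is what forces the derivative of the functional to have a definite sign; tracking the bookkeeping of the many product-rule terms accurately is the main obstacle.

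Finally I would rewrite the remaining term using $\phi'-\kappa u=\langle V,T\rangle_s$ and integrate by parts once more, so that $\int_{\gamma_t}(\Phi-u)(\phi'-\kappa u)\,\mathrm{d}s=\int_{\gamma_t}(\kappa-1)\langle V,T\rangle^2\,\mathrm{d}s$. This yields $\frac{\partial}{\partial t}\int_{\gamma_t}(\Phi-u)(\kappa-1)\,\mathrm{d}s=-2\int_{\gamma_t}(\kappa-1)\langle V,T\rangle^2\,\mathrm{d}s\le 0$, where non-positivity uses strict h-convexity $\kappa>1$ (Proposition \ref{prop-curvature}). Equality forces $\langle V,T\rangle\equiv 0$, i.e. $r$ is constant along $\gamma_t$, which is precisely the statement that $\gamma_t$ is a geodesic circle centered at the origin.
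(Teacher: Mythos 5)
Your proposal is correct and follows essentially the same route as the paper: the same evolution equations for $\Phi-u$, $\kappa-1$ and $\mathrm{d}s$, the same integration by parts that cancels the $\eta_s,\eta_{ss}$ terms, the Minkowski formula \eqref{equ-curvemink} to remove the constant term, and the same final expression $-2\int_{\gamma_t}(\kappa-1)\langle V,T\rangle^2\,\mathrm{d}s=-2\int_{\gamma_t}(\kappa-1)\bigl|\partial\Phi/\partial s\bigr|^2\,\mathrm{d}s$ with the identical rigidity conclusion. The only differences are bookkeeping (you collapse the algebraic part before substituting $\eta(\kappa-1)=\phi'-\kappa u$, while the paper substitutes first and then integrates by parts against $\partial^2\Phi/\partial s^2$), which is immaterial.
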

\begin{proof}
	Along the flow $\partial_t X=\eta\nu$, we calculate that
	\begin{eqnarray}
		\dfrac{\partial}{\partial t} \int_{\gamma_t} (\Phi-u)(\kappa-1) \mathrm{d}s &=& \int_{\gamma_t} \left(\partial_t\Phi-\partial_t u\right)(\kappa-1) \mathrm{d}s + \int_{\gamma_t} (\Phi-u)\partial_t \kappa \mathrm{d}s \notag\\
		&&+ \int_{\gamma_t} (\Phi-u)(\kappa-1) \partial_t \mathrm{d}s.
	\end{eqnarray}
	Note that
	\begin{eqnarray}
		\partial_t \Phi &=& \langle \overline{\nabla}\Phi, \partial_t X \rangle = \langle V, \eta\nu \rangle = \eta u,\\
		\partial_t u &=& \langle \overline{\nabla}_{\partial_t X} V,\nu \rangle + \langle V,\partial_t\nu \rangle \notag\\
		&=& \eta \langle \overline{\nabla}_{\nu} V,\nu \rangle - \langle V,\dfrac{\partial \eta}{\partial s}T \rangle\notag\\
		&=& \eta \phi' -\dfrac{\partial \eta}{\partial s}\dfrac{\partial\Phi}{\partial s},
	\end{eqnarray}
	we get the evolution equation
	\begin{eqnarray}
		\partial_t \int_{\gamma_t} (\Phi-u)(\kappa-1) \mathrm{d}s &=& \int_{\gamma_t} \left[(u-\phi')\eta + \dfrac{\partial \eta }{\partial s}\dfrac{\partial\Phi}{\partial s} \right](\kappa-1) \mathrm{d}s \notag\\
		&&-\int_{\gamma_t} (\Phi-u)\left[ \dfrac{\partial^2 \eta }{\partial s^2} + (\kappa^2-1) \eta  \right] \mathrm{d}s \notag\\
		&&+ \int_{\gamma_t} (\Phi-u)(\kappa-1)\kappa \eta \mathrm{d}s.
	\end{eqnarray}
	Using integration by parts, we have
	\begin{eqnarray}
		\int_{\gamma_t} (\Phi-u)\dfrac{\partial^2 \eta }{\partial s^2} \mathrm{d}s = -\int_{\gamma_t} \left(\dfrac{\partial \Phi}{\partial s} - \dfrac{\partial u}{\partial s}\right) \dfrac{\partial \eta }{\partial s} \mathrm{d}s = \int_{\gamma_t} (\kappa-1)\dfrac{\partial\Phi}{\partial s}\dfrac{\partial \eta }{\partial s} \mathrm{d}s.
	\end{eqnarray}
	We obtain that
	\begin{eqnarray}
		\partial_t \int_{\gamma_t} (\Phi-u)(\kappa-1) \mathrm{d}s &=& \int_{\gamma_t} (u-\phi')(\kappa-1)\eta  \mathrm{d}s - \int_{\gamma_t} (\Phi-u)(\kappa-1)\eta  \mathrm{d}s \notag\\
		&=& \int_{\gamma_t} (2u-2\Phi-1)(\kappa-1)\eta  \mathrm{d}s, \label{equ-phiu}
	\end{eqnarray}
	where we used $\phi'=\Phi+1$. Substituting
	\begin{equation*}
		\eta =\dfrac{\phi'-u}{\kappa-1}-u=\dfrac{\phi'-\kappa u}{\kappa-1}
	\end{equation*}
    into (\ref{equ-phiu}), using \eqref{equ-Phi'} and integration by parts, we obtain that
	\begin{eqnarray}
		\partial_t \int_{\gamma_t} (\Phi-u)(\kappa-1) \mathrm{d}s &=& \int_{\gamma_t} (2u-2\Phi-1)(\phi'-\kappa u) \mathrm{d}s\notag\\
		&=& \int_{\gamma_t} (2u-2\Phi-1)\dfrac{\partial^2 \Phi}{\partial s^2} \mathrm{d}s \notag\\
		&=& - 2\int_{\gamma_t} \left( \dfrac{\partial u}{\partial s}-\dfrac{\partial\Phi}{\partial s} \right) \dfrac{\partial\Phi}{\partial s} \mathrm{d}s \notag\\
		&=& -2\int_{\gamma_t} (\kappa-1) \left| \dfrac{\partial\Phi}{\partial s} \right|^2 \mathrm{d}s \leqslant 0, \label{equ-phiueq0}
	\end{eqnarray}
	where we used that the curvature of $\gamma_t$ is $\kappa>1$ by Proposition \ref{prop-curvature}. Equality holds in (\ref{equ-phiueq0}) if and only if $\dfrac{\partial\Phi}{\partial s}\equiv 0$ holds along $\gamma_t$, which is equivalent to that $r\equiv r_0$. Thus $\gamma_t$ is a geodesic circle centered at the origin.
\end{proof}

\begin{proof}[Proof of Theorem \ref{thm-inequality2dim}]
	We divide the proof into two steps.
	
	{\bf Step 1.} Suppose $\gamma$ is a smooth strictly h-convex closed curve in $\mathbb{H}^2$. Let $\gamma$ be the initial value of the flow (\ref{equ-flow}) and we obtain the solution $\{\gamma_t\}_{t\in[0,\infty)}$ by Theorem \ref{thm-existenceandconvergence}. Moreover, when $t\to\infty$, $\gamma_t$ converges smoothly to a geodesic circle $\gamma_{\infty}$ centered at the origin with radius $\rho_{\infty}$. By (\ref{equ-mono1}) and (\ref{equ-mono}), we have
	\begin{eqnarray}
		\int_{\gamma}(\Phi-u)(\kappa-1)\mathrm{d}s + (L[\gamma]-A[\gamma]) &\geqslant& \int_{\gamma_{\infty}}(\Phi-u)(\kappa-1)\mathrm{d}s + (L[\gamma_{\infty}]-A[\gamma_{\infty}])\notag\\
		&=& 2\pi \left(\Phi(\rho_{\infty})-\phi(\rho_{\infty})\right)\left( \phi'(\rho_{\infty})-\phi(\rho_{\infty}) \right) \notag\\
		&&+ 2\pi \left( \phi(\rho_{\infty})-\Phi(\rho_{\infty}) \right)\notag\\
		(\mbox{by}\ \phi'=\Phi+1)	&=&  2\pi \left( \phi(\rho_{\infty})-\Phi(\rho_{\infty}) \right)^2 \notag\\
		&=& \dfrac{1}{2\pi} \left( L[\gamma_{\infty}]-A[\gamma_{\infty}] \right)^2 \notag\\
		&=& \dfrac{1}{2\pi} \left( L[\gamma]-A[\gamma] \right)^2. \label{equ-step1curve}
	\end{eqnarray}
	By Lemma \ref{lemma-mono}, equality holds in \eqref{equ-step1curve} if and only if $\gamma$ is a geodesic circle centered at the origin.
	
	{\bf Step 2.} Now we consider $\gamma$ as a smooth h-convex closed curve in $\mathbb{H}^2$. We evolve $\gamma$ along curve shortening flow
	\begin{equation}
		\partial_t X = -\kappa\nu,
	\end{equation}
	and obtain solution curve $\gamma_t$ with curvature $\kappa(x,t)$ satisfying
	\begin{equation}
		\dfrac{\partial}{\partial t} (\kappa-1) = \dfrac{\partial^2}{\partial s^2}(\kappa-1) + (\kappa-1)\kappa(\kappa+1).
	\end{equation}
	Note that for a compact h-convex curve, there exists at least one point on $\gamma$ with $\kappa>1$. By strong maximum principle, $\gamma_t\ (\forall\ t>0)$ is strictly h-convex. Note that for $\gamma_t\ (t>0)$, inequality (\ref{equ-ineq}) holds by {\bf Step 1}. Now let $t\to 0$, we obtain that inequality (\ref{equ-ineq}) holds for the h-convex curve $\gamma$.
	
	If equality in (\ref{equ-ineq}) holds for a h-convex curve $\gamma$, we are going to show that $\gamma$ is strictly h-convex. Let
	\begin{equation}
		\tilde{\gamma} = \{ p\in\gamma:\kappa(p)>1 \}.
	\end{equation}
	Since there exists at least one point on $\gamma$ satisfying $\kappa>1$ for a compact h-convex curve $\gamma$, we know that $\tilde{\gamma}$ is a nonempty open set in $\gamma$. We next show that $\tilde{\gamma}$ is also closed in $\gamma$. Following \cite{Guan2009TheQI}, we choose a smooth function $\varphi\in C_c^{\infty}(\tilde{\gamma})$ with  compact support in $\tilde{\gamma}$, and consider a normal variation of $\gamma$
	\begin{equation}
		\partial_t X= -\varphi\nu,
	\end{equation}
	where $t\in(-\varepsilon,\varepsilon)$, and $\varepsilon>0$ is small enough such that $\gamma_t=X(\mathbb{S}^1,t)$ is h-convex. Let
	\begin{equation}
		Q(t) = \int_{\gamma_t} (\Phi-u)(\kappa-1) \mathrm{d}s + (L[\gamma_t]-A[\gamma_t]) - \dfrac{1}{2\pi}(L[\gamma_t]-A[\gamma_t])^2,
	\end{equation}
	we have
	\begin{equation}
		Q(t)\geqslant 0,\ t\in(-\varepsilon,\varepsilon),\ \ Q(0)=0,
	\end{equation}
	which implies that $Q(t)\ (t\in(-\varepsilon,\varepsilon))$ attains its minimum at $t=0$. Thus
	\begin{eqnarray}
		0 &=& \left.\dfrac{\mathrm{d}}{\mathrm{d}t}\right|_{t=0} Q(t) \notag\\
		&=& -\int_{\gamma} (2u-2\Phi-1)(\kappa-1)\varphi \mathrm{d}s-\int_{\gamma} (\kappa-1)\varphi \mathrm{d}s + \dfrac{L[\gamma]-A[\gamma]}{\pi}\int_{\gamma} (\kappa-1)\varphi \mathrm{d}s \notag\\
		&=& \int_{\gamma} \left( \dfrac{L[\gamma]-A[\gamma]}{\pi}-(2u-2\Phi) \right)(\kappa-1) \varphi \mathrm{d}s, \ \  \forall\  \varphi\in C_c^{\infty}(\tilde{\gamma}).
	\end{eqnarray}
	In particular, the curve $\tilde{\gamma}$ satisfies
	\begin{equation}
		2u-2\Phi\equiv \dfrac{L[\gamma]-A[\gamma]}{\pi},
	\end{equation}
	and hence
	\begin{equation}
		\tilde{\gamma}=\left\lbrace p\in\gamma:(u-\Phi)(p)=\dfrac{L[\gamma]-A[\gamma]}{2\pi} \right\rbrace.
	\end{equation}
	This implies that $\tilde{\gamma}$ is closed in $\gamma$. By the connectness of $\gamma$, $\gamma\equiv\tilde{\gamma}$, thus $\gamma$ is strictly h-convex. By the equality case in {\bf Step 1}, we conclude that $\gamma$ is a geodesic circle centered at the origin.
\end{proof}

\subsection{Inequalities for hypersurfaces}
We include the complete proof of Theorem \ref{thm-inequalityhighdim} in this subsection though it has been proved in \cite{Li2022Hyperbolic}. Some details in this proof will be used in proving stability results later. Let's first derive the general evolution equation.
\begin{lemma}
	Along the flow $\partial_t X=\eta\nu$, hypersurface $M_t=\partial\Omega_t\subset\mathbb{H}^{n+1}$ $(n\geqslant 2)$ satisfies
	\begin{eqnarray}
		\dfrac{\partial}{\partial t} \left[ \int_{M_t} (\Phi-u)E_k(\tilde{\kappa}) \mathrm{d}\mu_t + (n-2k)\tilde{W}_k(\Omega_t) \right] &=& (n-k) \int_{M_t} \Phi E_{k+1}(\tilde{\kappa})\eta \mathrm{d}\mu_t \notag\\
		&&- (k+1) \int_{M_t} (\phi'-u) E_k(\tilde{\kappa}) \eta \mathrm{d}\mu_t. \label{equ-moPhiu}
	\end{eqnarray}
\end{lemma}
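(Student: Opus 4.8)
The plan is to differentiate the bracketed functional factor by factor and then sort the outcome into a part that still carries derivatives of the speed $\eta$ and a purely zeroth-order part. First I would apply the product rule to $\int_{M_t}(\Phi-u)E_k(\tilde\kappa)\,\mathrm{d}\mu_t$, substituting the evolution equations \eqref{equ-ptPhi} and \eqref{equ-ptu} for $\Phi$ and $u$, the evolution \eqref{equ-ptek} of $E_k(\tilde\kappa)$, and the area evolution \eqref{equ-areaevolution} with $H=n\big(E_1(\tilde\kappa)+1\big)$; the contribution $(n-2k)\partial_t\tilde W_k(\Omega_t)$ is read off directly from the variational formula \eqref{equ-ptwk}. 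After this substitution the only terms involving $\nabla\eta$ are $\int_{M_t}E_k\langle V,\nabla\eta\rangle\,\mathrm{d}\mu_t$, coming from $\partial_t(\Phi-u)$, and $-\int_{M_t}(\Phi-u)\dot E_k^{ij}\nabla_j\nabla^i\eta\,\mathrm{d}\mu_t$, coming from the leading term of $\partial_t E_k(\tilde\kappa)$; everything else is zeroth order in $\eta$.

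The key analytic step is to integrate by parts in these two derivative terms. For the Hessian term I would integrate by parts twice, using that the shifted Weingarten tensor $S^i_j=h^i_j-\delta^i_j$ is Codazzi so that $\dot E_k^{ij}$ is divergence free by Lemma \ref{lem-div}; this transfers both derivatives onto $\Phi-u$ and produces $-\int_{M_t}\eta\,\dot E_k^{ij}\nabla_j\nabla^i(\Phi-u)\,\mathrm{d}\mu_t$. For the first term I would write $\langle V,\nabla\eta\rangle=\langle\nabla\Phi,\nabla\eta\rangle$ and integrate by parts once to get $-\int_{M_t}\eta\big(\langle\nabla E_k,\nabla\Phi\rangle+E_k\Delta\Phi\big)\,\mathrm{d}\mu_t$. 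I would then insert the identity for $\nabla_j\nabla^i(\Phi-u)$ recorded in Section \ref{section-2}, together with the trace formulas $\dot E_k^{ij}S^i_j=kE_k$, $\dot E_k^{ij}(S^2)^i_j=nE_1E_k-(n-k)E_{k+1}$, and the chain-rule identity $\dot E_k^{ij}\langle V,\nabla S^i_j\rangle=\langle\nabla\Phi,\nabla E_k\rangle$. The crucial observation is that the two $\langle\nabla\Phi,\nabla E_k\rangle$ contributions arising from these two integrations by parts cancel exactly; using $\Delta\Phi=n\phi'-uH$ one checks that the entire $\nabla\eta$-block collapses to $(n-k)\int_{M_t}\eta\big(-(\phi'-u)E_k+uE_{k+1}\big)\,\mathrm{d}\mu_t$.

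Finally I would assemble the zeroth-order terms, in which the $nE_1(\tilde\kappa)E_k$ pieces from $\partial_t E_k(\tilde\kappa)$ and from $\eta H\,\mathrm{d}\mu_t$ cancel, and add this to the simplified $\nabla\eta$-block. Sorting the sum by $E_{k+1}$ and $E_k$, the $E_{k+1}$ coefficients combine to $(n-k)\Phi E_{k+1}$, giving the first term on the right-hand side of \eqref{equ-moPhiu}. For the $E_k$ coefficients I would use $\phi'=\Phi+1$, equivalently $\Phi-u=(\phi'-u)-1$, to convert every $\Phi$-weighted $E_k$ term into a $(\phi'-u)$-weighted one; the numerical coefficients then sum to $-(k+1)$, producing the second term. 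I expect the main obstacle to be organizational rather than conceptual: carrying the many factors $E_k$, $E_{k+1}$, $E_1E_k$ and their weights through two integrations by parts without sign errors, and in particular confirming the cancellation of the $\langle\nabla\Phi,\nabla E_k\rangle$ terms, which is precisely what allows the gradient-of-$\eta$ contributions to be re-expressed as clean curvature integrals.
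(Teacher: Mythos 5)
Your proposal is correct and follows essentially the same route as the paper's proof: the same expansion via \eqref{equ-ptPhi}, \eqref{equ-ptu}, \eqref{equ-ptek}, \eqref{equ-areaevolution}, \eqref{equ-ptwk}, the same double integration by parts using the divergence-free property of $\dot{E}_k^{ij}$ (Lemma \ref{lem-div}) together with the Hessian identity for $\nabla_j\nabla^i(\Phi-u)$, the trace formulas, $\Delta\Phi=n\phi'-uH$, and $\phi'=\Phi+1$. The only difference is organizational: the paper first extracts $\langle V,\nabla E_k(\tilde{\kappa})\rangle\eta$ from the Hessian term and combines it with $\langle V,\nabla\eta\rangle E_k(\tilde{\kappa})$ into $\langle V,\nabla(\eta E_k(\tilde{\kappa}))\rangle$ before integrating by parts once, whereas you integrate by parts first and then observe the cancellation of the two $\langle\nabla\Phi,\nabla E_k(\tilde{\kappa})\rangle$ contributions --- the same computation in a different order.
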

\begin{proof}
	Denote $S^i_j=h^i_j-\delta^i_j$, we will compute
	\begin{eqnarray}
		&&\dfrac{\partial}{\partial t} \left( \int_{M_t} (\Phi-u)E_k(\tilde{\kappa}) \mathrm{d}\mu_t + (n-2k)\tilde{W}_k(\Omega_t) \right)\notag\\ 
		&=& \int_{M_t} (\partial_t \Phi- \partial_t u) E_k(\tilde{\kappa}) \mathrm{d}\mu_t + \int_{M_t} (\Phi-u) \partial_t E_k(\tilde{\kappa}) \mathrm{d}\mu_t \notag\\
		&&+ \int_{M_t} (\Phi-u)E_k(\tilde{\kappa}) \eta H \mathrm{d}\mu_t + (n-2k)\int_{M_t} \eta E_k(\tilde{\kappa}) \mathrm{d}\mu_t.
	\end{eqnarray}
	Substituting (\ref{equ-ptwk}), (\ref{equ-ptPhi}), (\ref{equ-ptu}) and (\ref{equ-ptek}), and combining $nE_1(\tilde{\kappa})=H-n$ and $\phi'=\Phi+1$, we obtain
	\begin{eqnarray}
		&&\dfrac{\partial}{\partial t} \left( \int_{M_t} (\Phi-u)E_k(\tilde{\kappa}) \mathrm{d}\mu_t + (n-2k)\tilde{W}_k(\Omega_t) \right)\notag\\ 
		&=& \int_{M_t} \left[(u-\phi')\eta+\langle V,\nabla \eta \rangle\right] E_k(\tilde{\kappa}) \mathrm{d}\mu_t - \int_{M_t} (\Phi-u) \dot{E}_k^{ij}(\tilde{\kappa})\nabla_j\nabla^i \eta \mathrm{d}\mu_t\notag\\
		&&+(n-2k)\int_{M_t} (\phi'-u)E_k(\tilde{\kappa}) \eta \mathrm{d}\mu_t + (n-k) \int_{M_t} (\Phi-u) E_{k+1}(\tilde{\kappa})\eta \mathrm{d}\mu_t. \label{equ-pt1}
	\end{eqnarray}
	By divergence free property of $E_k(\tilde{\kappa})$ (Lemma \ref{lem-div}) and using integration by parts, we have
	\begin{eqnarray}
		&&-\int_{M_t} (\Phi-u) \dot{E}_k^{ij}(\tilde{\kappa})\nabla_j\nabla^i \eta \mathrm{d}\mu_t \notag\\
		&=& -\int_{M_t} \dot{E}_k^{ij}(\tilde{\kappa})\nabla_j\nabla^i(\Phi-u)\eta \mathrm{d}\mu_t \notag\\
		&=& -\int_{M_t} \dot{E}_k^{ij}(\tilde{\kappa}) \left[ -\phi'S^i_j-\langle V,\nabla S^i_j \rangle + u\left( (S^2)^i_j+S^i_j \right) \right] \eta \mathrm{d}\mu_t \notag\\
		&=& \int_{M_t} \left[ \phi' k E_k(\tilde{\kappa}) + \langle V, \nabla E_k(\tilde{\kappa})  \rangle \right] \eta \mathrm{d}\mu_t \notag \\
		&&- \int_{M_t}  u \left( nE_1(\tilde{\kappa}) E_k(\tilde{\kappa}) - (n-k) E_{k+1}(\tilde{\kappa}) + kE_k(\tilde{\kappa}) \right)\eta \mathrm{d}\mu_t. \label{equ-pt2}
	\end{eqnarray}
	Note that by using integration by parts and $\Delta\Phi=n\phi'-Hu=n\phi'-(nE_1(\tilde{\kappa})+n)u$, we obtain
	\begin{eqnarray}
		&&\int_{M_t} \langle V,\nabla\eta \rangle E_k(\tilde{\kappa}) \mathrm{d}\mu_t + \int_{M_t} \langle V,\nabla E_k(\tilde{\kappa}) \rangle \eta \mathrm{d}\mu_t \notag\\
		&=& \int_{M_t} \langle V,\nabla(\eta E_k(\tilde{\kappa})) \rangle \mathrm{d}\mu_t \notag\\
		&=& -\int_{M_t} \Delta\Phi E_k(\tilde{\kappa}) \eta \mathrm{d}\mu_t \notag\\
		&=& -\int_{M_t} \left[ n\phi'- \left( nE_1(\tilde{\kappa})+n \right)u \right] E_k(\tilde{\kappa}) \eta \mathrm{d}\mu_t. \label{equ-pt3}
	\end{eqnarray}
	Combining (\ref{equ-pt1}), (\ref{equ-pt2}) and (\ref{equ-pt3}), and after a direct computation we will obtain (\ref{equ-moPhiu}).
\end{proof}

\begin{proof}[Proof of Theorem \ref{thm-inequalityhighdim}]
	For $k=1,2,\cdots,n$, we evolve $M$ along the flow (\ref{equ-flowhy}) and get a family of strictly h-convex hypersurfaces $\{M_t\}_{t\in[0,\infty)}$ by Theorem \ref{thm-higherdimension}. Moreover, as $t\to\infty$, $M_t$ converges smoothly to a geodesic sphere $M_{\infty}=\partial \Omega_{\infty}$ centered at the origin with radius $\rho_{\infty}$.
	
	Substituting $\eta = \dfrac{(\phi'-u)E_{k-1}(\tilde{\kappa})}{E_k(\tilde{\kappa})}-u$ in (\ref{equ-moPhiu}), by Newton-MacLaurin inequality (\ref{equ-MewtonMacL}) and (\ref{equ-ekdotijphi}), we have
	\begin{eqnarray}
		&&\dfrac{\partial}{\partial t} \left[ \int_{M_t} (\Phi-u)E_k(\tilde{\kappa}) \mathrm{d}\mu_t + (n-2k)\tilde{W}_k(\Omega_t) \right] \notag\\
		&\leqslant& (n-k)\int_{M_t} \Phi \left((\phi'-u)E_k(\tilde{\kappa})-uE_{k+1}(\tilde{\kappa})\right) \mathrm{d}\mu_t \notag\\
		&&- (k+1) \int_{M_t} (\phi'-u)\left( (\phi'-u) E_{k-1}(\tilde{\kappa}) - uE_k(\tilde{\kappa}) \right) \mathrm{d}\mu_t \notag\\
		&=& \dfrac{n-k}{k+1}\int_{M_t} \Phi \dot{E}_{k+1}^{ij}(\tilde{\kappa})\nabla_j\nabla^i \phi' \mathrm{d}\mu_t - \dfrac{k+1}{k} \int_{M_t} (\phi'-u) \dot{E}^{ij}_k(\tilde{\kappa}) \nabla_j\nabla^i \phi' \mathrm{d}\mu_t \notag\\
		&=& -\dfrac{n-k}{k+1} \int_{M_t} \dot{E}^{ij}_{k+1}(\tilde{\kappa}) \nabla_j\Phi \nabla^i \phi' \mathrm{d}\mu_t + \dfrac{k+1}{k} \int_{M_t} \dot{E}_k^{ij}(\tilde{\kappa}) \nabla_j(\phi'-u) \nabla^i \phi' \mathrm{d}\mu_t \notag\\
		&=& -\dfrac{n-k}{k+1} \int_{M_t} \dot{E}^{ij}_{k+1}(\tilde{\kappa}) \nabla_j\phi' \nabla^i \phi' \mathrm{d}\mu_t - \dfrac{k+1}{k} \int_{M_t} \dot{E}_k^{ij}(\tilde{\kappa}) S_j^l \nabla_l \phi' \nabla^i \phi' \mathrm{d}\mu_t. \label{equ-Mo1}
	\end{eqnarray}
	Note that $\dot{E}_{k}^{ij}(\tilde{\kappa})$, $\dot{E}_{k+1}^{ij}(\tilde{\kappa})$, $S_j^l$ are all symmetric positive definite, we conclude that
	\begin{equation}
		\dfrac{\partial}{\partial t} \left[ \int_{M_t} (\Phi-u)E_k(\tilde{\kappa}) \mathrm{d}\mu_t + (n-2k)\tilde{W}_k(\Omega_t) \right] \leqslant 0 \label{equ-Mo2}
	\end{equation}
	with equality holds if and only if $\nabla\phi'\equiv 0$, which implies that $M_t$ is a geodesic sphere centered at the origin.
	
	By Minkowski identity (\ref{equ-shiftedminkowski}), along the flow (\ref{equ-flowhy}), we have
	\begin{equation}
		\dfrac{\partial}{\partial t} \tilde{W}_k(\Omega_t) = \int_{M_t} \left[(\phi'-u) E_{k-1}(\tilde{\kappa}) - u E_k(\tilde{\kappa}) \right] \mathrm{d}\mu_t = 0.
	\end{equation}
	
	Therefore,
	\begin{eqnarray}
		\int_{M}(\Phi-u)E_k(\tilde{\kappa}) \mathrm{d}\mu + (n-2k) \tilde{W}_k(\Omega) &\geqslant& \int_{M_{\infty}}(\Phi-u)E_k(\tilde{\kappa}) \mathrm{d}\mu_{\infty} + (n-2k) \tilde{W}_k(\Omega_{\infty}) \notag\\
		&=& \tilde{h}_k\circ \tilde{f}_k^{-1} \left(\tilde{W}_k(\Omega_{\infty})\right) \notag\\
		&=& \tilde{h}_k\circ \tilde{f}_k^{-1} \left(\tilde{W}_k(\Omega)\right). \label{equ-Phi-u}
	\end{eqnarray}
	Here $\tilde{h}_k(r)= \int_{\partial\overline{B}_r}(\Phi-u)E_k(\tilde{\kappa}) \mathrm{d}\mu + (n-2k) \tilde{W}_k(\overline{B}_r)$, $\tilde{f}_k(r)=\tilde{W}_k(\overline{B}_r)$, $\overline{B}_r$ is a geodesic ball centered at the origin with radius $r$. Equality holds in \eqref{equ-Phi-u} if and only if $M_t$ is a geodesic sphere centered at the origin. Let $t\to 0$, we conclude that $M$ is a geodesic sphere centered at the origin.
\end{proof}

\section{Stability results}
\label{section-5}
In this section, we will prove our stability results Theorems \ref{thm-stabilitycurve} and \ref{thm-stabilityhypersurface}.
\subsection{Stability of inequalities for curves}
We will discuss the stability of inequality (\ref{equ-ineq}).

\begin{proof}[Proof of Theorem \ref{thm-stabilitycurve}]
	Let
	\begin{equation}
		\varepsilon := \int_{\gamma} (\Phi-u)(\kappa-1) \mathrm{d}s + (L[\gamma]-A[\gamma]) - \dfrac{1}{2\pi}(L[\gamma]-A[\gamma])^2. \label{equ-curveep}
	\end{equation}
	We evolve $\gamma$ along flow (\ref{equ-flow}) and obtain the solution curve $\{\gamma_t\}_{t\in[0,\infty)}$ by Theorem \ref{thm-existenceandconvergence}. Moreover, $\gamma_t$ is strictly h-convex and smoothly converges to a geodesic circle $\gamma_{\infty}$ centered at the origin as $t\to\infty$. Let
	\begin{equation}
		Q(t):=\int_{\gamma_t} (\Phi-u)(\kappa-1) \mathrm{d}s + (L[\gamma_t]-A[\gamma_t]).
	\end{equation}
	On the one hand, by Lemma \ref{lemma-la} and \eqref{equ-curveep}, we have
	\begin{eqnarray}
		\int_{0}^{\infty} \partial_t Q(t) \mathrm{d}t &=& Q(\infty) - Q(0) \notag\\
		&=& \dfrac{1}{2\pi}\left( L[\gamma_{\infty}]-A[\gamma_{\infty}] \right)^2 - Q(0)\notag\\
		&=& \dfrac{1}{2\pi}\left( L[\gamma]-A[\gamma] \right)^2 - Q(0)\notag\\
		&=& -\varepsilon. \label{equ-qt1}
	\end{eqnarray}
	On the other hand, by (\ref{equ-phiueq0}),
	\begin{eqnarray}
		\int_0^{\infty} \partial_t Q(t) \mathrm{d}t = -\int_0^{\infty}\int_{\gamma_t} 2(\kappa-1)\left|\dfrac{\partial\Phi}{\partial s}\right|^2\mathrm{d}s\mathrm{d}t. \label{equ-qt2}
	\end{eqnarray}
	Combining (\ref{equ-qt1}) and (\ref{equ-qt2}), we obtain
	\begin{equation}
		\int_0^{\infty}\int_{\gamma_t} 2(\kappa-1)\left| \dfrac{\partial\Phi}{\partial s} \right|^2 \mathrm{d}s\mathrm{d}t = \varepsilon.
	\end{equation}
	Since $\gamma_t$ is strictly h-convex, there exists a constant $C=C(\min\limits_{\gamma}\kappa)>0$ such that $2(\kappa-1)\geqslant C$. Thus
	\begin{equation}
		\int_0^{\infty} \int_{\gamma_t} \left| \dfrac{\partial\Phi}{\partial s} \right|^2 \mathrm{d}s\mathrm{d}t \leqslant C(\min\limits_{\gamma}\kappa)\varepsilon.
	\end{equation}
	
	Since $\gamma_t$ is star-shaped, we regard it as a graph over $\mathbb{S}^1$ and denote $\gamma_t=\{(\rho(\theta),\theta):\theta\in\mathbb{S}^1\}$. Since by Propositions \ref{prop-curvature} and \ref{prop-c0est}, along the flow (\ref{equ-flow}), we have
	\begin{equation}
		\left| \dfrac{\phi'-u}{\kappa-1} -u \right|\leqslant C (\rho_0,\min\limits_{\gamma}\kappa).
	\end{equation}
	We can characterise the Hausdorff distance from $\gamma$ to $\gamma_t$ by
	\begin{eqnarray}
		\mathrm{dist}(\gamma,\gamma_t) \leqslant \int_0^{t} \left|\partial_\tau X(x,\tau) \right| \mathrm{d}\tau = \int_0^t \left| \dfrac{\phi'-u}{\kappa-1}-u \right|(x,\tau) \mathrm{d}\tau \leqslant C(\rho_0,\min\limits_{\gamma}\kappa)t.
	\end{eqnarray}
	Since
	\begin{equation}
		\int_0^{\sqrt{\varepsilon}} \int_{\gamma_t} \left| \dfrac{\partial\Phi}{\partial s} \right|^2 \mathrm{d}s\mathrm{d}t \leqslant C(\min\limits_{\gamma}\kappa)\varepsilon,
	\end{equation}
	there exists  $t_{\varepsilon}\in(0,\sqrt{\varepsilon}]$ such that
	\begin{equation}
		\int_{\gamma_{t_{\varepsilon}}} \left| \dfrac{\partial\Phi}{\partial s} \right|^2 \mathrm{d}s \leqslant C(\min\limits_{\gamma}\kappa)\sqrt{\varepsilon}. \label{equ-distgamma}
	\end{equation}
	Meanwhile,
	\begin{equation}
		\mathrm{dist}(\gamma,\gamma_{t_{\varepsilon}})\leqslant C(\rho_0,\min\limits_{\gamma}\kappa)\sqrt{\varepsilon}.\label{equ-distest1}
	\end{equation}
	
	We next show that there exists a geodesic circle $\gamma_{\mathbb{H}}$ such that $\mathrm{dist}(\gamma,\gamma_{\mathbb{H}})\leqslant C\varepsilon^{\frac{1}{6}}$. Consider the strictly h-convex curve $\gamma_{t_{\varepsilon}}=\{(\rho(\theta),\theta):\theta\in\mathbb{S}^1\}$ as a graph on $\mathbb{S}^1$. Note that the arc-length element is $\mathrm{d}s= \sqrt{\phi^2(\rho)+\rho_{\theta}^2}\mathrm{d}\theta$, from (\ref{equ-distgamma}) we have,
	\begin{eqnarray}
		C(\min\limits_{\gamma}\kappa)\sqrt{\varepsilon}&\geqslant& \int_{\gamma_{t_{\varepsilon}}} \left| \dfrac{\partial\Phi}{\partial s} \right|^2 \mathrm{d}s = \int_{\mathbb{S}^1} \dfrac{\phi^2(\rho)\rho_{\theta}^2}{\sqrt{\phi^2(\rho)+\rho_{\theta}^2}} \mathrm{d}\theta \notag\\
		&\geqslant& C(\max\limits_{\gamma}|\rho_{\theta}|,\rho_0) \int_{\mathbb{S}^1} \rho_{\theta}^2 \mathrm{d}\theta \notag\\
		&\geqslant& C(\max\limits_{\gamma}|\rho_{\theta}|,\rho_0) \int_{\mathbb{S}^1} \left| \rho-a \right|^2 \mathrm{d}\theta, \label{equ-e}
	\end{eqnarray}
	where we used Poincar\'e inequality in (\ref{equ-e}), and $a=\dfrac{1}{2\pi}\displaystyle\int_{\mathbb{S}^1} \rho(\theta) \mathrm{d}\theta$. This implies that
	\begin{equation}
		\int_{\mathbb{S}^1} |\rho-a|^2 \mathrm{d}\theta \leqslant C_0(\rho_0,\max\limits_{\gamma}|\rho_{\theta}|,\min\limits_{\gamma}\kappa)\sqrt{\varepsilon}.
	\end{equation}
	We claim that: There exists a constant $C_1(\rho_0,\max\limits_{\gamma}|\rho_{\theta}|,\min\limits_{\gamma}\kappa)$ such that
	\begin{equation}
		\max\left\lbrace \max\limits_{\mathbb{S}^1} \rho - a, a-\min\limits_{\mathbb{S}^1} \rho \right\rbrace \leqslant C_1\varepsilon^{\frac{1}{6}}=:L.
	\end{equation}
	Suppose it is not true, without loss of generality, we assume that
	\begin{equation}
		\max\limits_{\mathbb{S}^1} \rho -a =\rho(\theta_0)-a>L.
	\end{equation}
	Then for $l:=\dfrac{L}{2\max\limits_{\mathbb{S}^1}|\rho_{\theta}|}>0$, there holds
	\begin{equation}
		|\rho(\xi)-a|>\dfrac{L}{2},\ \forall\ \xi\in\overline{B}_l(\theta_0) \subset \mathbb{S}^1.
	\end{equation}
	Thus,
	\begin{equation}
		C_0\sqrt{\varepsilon} \geqslant \int_{\mathbb{S}^1} |\rho-a|^2 \mathrm{d}\theta \geqslant \int_{\overline{B}_l(\theta_0)} |\rho-a|^2 \mathrm{d}\theta \geqslant CL^{3}= CC_{1}^3\sqrt{\varepsilon},
	\end{equation}
	which makes a contradiction if we choose $C_1= \left(\frac{2 C_0}{C}\right)^{\frac{1}{3}}$. Therefore, there exists a geodesic circle $\gamma_{\mathbb{H}}$ with radius $a$ such that
	\begin{equation}
		\mathrm{dist}(\gamma_{t_{\varepsilon}},\gamma_{\mathbb{H}}) \leqslant C\|\rho-a\|_{L^{\infty}(\mathbb{S}^1)} \leqslant C(\rho_0,\max\limits_{\gamma}|\rho_{\theta}|,\min\limits_{\gamma}\kappa)\varepsilon^{\frac{1}{6}}.\label{equ-distest2}
	\end{equation}
	
	Combining (\ref{equ-distest1}) with (\ref{equ-distest2}), we conclude that
	\begin{eqnarray}
		\mathrm{dist}(\gamma,\gamma_{\mathbb{H}}) &\leqslant& \mathrm{dist}(\gamma,\gamma_{t_{\varepsilon}}) + \mathrm{dist}(\gamma_{t_{\varepsilon}},\gamma_{\mathbb{H}})\notag\\
		&\leqslant & C(\rho_0,\max\limits_{\gamma}|\rho_{\theta}|,\min\limits_{\gamma}\kappa)\left( \varepsilon^{\frac{1}{2}}+\varepsilon^{\frac{1}{6}} \right),
	\end{eqnarray}
	which is (\ref{equ-curvestability}).
\end{proof}

\subsection{Stability of inequalities for hypersurfaces}
We will apply the nearly umbilical result (Theorem \ref{thm-stability}) to discuss the stability of inequalities (\ref{equ-modifiedquer}) and (\ref{equ-inequality1}).

\begin{proof}[Proof of Theorem \ref{thm-stabilityhypersurface}]
	For (\ref{equ-stabi}), we set
	\begin{equation}
		\varepsilon := \int_{M} (\Phi-u) E_k(\tilde{\kappa}) \mathrm{d}\mu + (n-2k)\tilde{W}_k(\Omega) - \tilde{h}_k\circ\tilde{f}_k^{-1}\left( \tilde{W}_k(\Omega) \right). \label{equ-eps}
	\end{equation}
	Set the center of the interior ball of $\Omega$ with radius $\rho_{-}(\Omega)$ denoted by $o$ as the origin. We evolve $M$ along the flow
	\begin{equation}
		\partial_t X = \left(\dfrac{(\phi'-u)E_{k-1}(\tilde{\kappa})}{E_k(\tilde{\kappa})}-u\right)\nu
	\end{equation}
	and obtain a family of strictly h-convex hypersurfaces $\{M_t\}_{t\in [0,\infty)}$. Denote $\Omega_t$ as the domain enclosed by $M_t$. By Theorem \ref{thm-higherdimension}, $M_t$ is strictly h-convex and $M_t$ converges smoothly to a geodesic sphere $M_{\infty}=\partial\Omega_{\infty}$ centered at the origin as $t\to\infty$. From \cite[Theorem 1]{Borisenko1999TotalCO}, we know
	\begin{equation}
		\rho_{-}(\Omega)\leqslant r(x,t)\leqslant \rho_{-}(\Omega)+\ln 2,\ \forall\ (x,t)\in M_t.
	\end{equation}
	Thus there exists a constant $C=C(\rho_{-}(\Omega))>0$ such that $\Phi(r)\geqslant C>0$. Denote
	\begin{equation}
		Q(t) = \int_{M_t} (\Phi-u)E_k(\tilde{\kappa}) \mathrm{d}\mu_t + (n-2k)\tilde{W}_k(\Omega_t),
	\end{equation}
	then by (\ref{equ-moPhiu}) we have
	\begin{eqnarray}
		\dfrac{\partial}{\partial t} Q(t) &=& (n-k)\int_{M_t} (\Phi-C) E_{k+1}(\tilde{\kappa}) \left( \dfrac{(\phi'-u)E_{k-1}(\tilde{\kappa})}{E_k(\tilde{\kappa})} - u \right) \mathrm{d}\mu_t \label{equ-Phic1}\\
		&&+(n-k)C \int_{M_t} E_{k+1}(\tilde{\kappa}) \left( \dfrac{(\phi'-u)E_{k-1}(\tilde{\kappa})}{E_k(\tilde{\kappa})} - u \right) \mathrm{d}\mu_t \\
		&&-(k+1) \int_{M_t} (\phi'-u) E_{k}(\tilde{\kappa}) \left( \dfrac{(\phi'-u)E_{k-1}(\tilde{\kappa})}{E_k(\tilde{\kappa})} - u\right) \mathrm{d}\mu_t \label{equ-Phic2}\\
		&\leqslant& (n-k)C \int_{M_t} E_{k+1}(\tilde{\kappa}) \left( \dfrac{(\phi'-u)E_{k-1}(\tilde{\kappa})}{E_k(\tilde{\kappa})} - u \right) \mathrm{d}\mu_t\\
		&=& (n-k)C \int_{M_t} (\phi'-u)\left[ \dfrac{E_{k+1}(\tilde{\kappa})E_{k-1}(\tilde{\kappa})}{E_k(\tilde{\kappa})} - E_k(\tilde{\kappa}) \right] \mathrm{d}\mu_t. \label{equ-Phi4}
	\end{eqnarray}
	Here $(\ref{equ-Phic1}) + (\ref{equ-Phic2})\leqslant 0$ for the same reason as (\ref{equ-Mo1}) and (\ref{equ-Mo2}). We also used Minkowski type fomulas (\ref{equ-shiftedminkowski}) in (\ref{equ-Phi4}). Integrating (\ref{equ-Phi4}) on $[0,\infty)$, and combining the fact
	\begin{equation}
		\phi'-u\geqslant \cosh r-\sinh r\geqslant \dfrac{1}{2}\mathrm{e}^{-\rho_{-}(\Omega)}:=C(\rho_{-}(\Omega)), \label{equ-phi'-u}
	\end{equation} 
    we have
	\begin{equation}
		\int_0^{\infty} \dfrac{\partial}{\partial t} Q(t) \mathrm{d}t \leqslant (n-k)C \int_0^{\infty} \int_{M_t}  \dfrac{E_{k+1}(\tilde{\kappa})E_{k-1}(\tilde{\kappa})}{E_k(\tilde{\kappa})} - E_k(\tilde{\kappa})  \mathrm{d}\mu_t \mathrm{d}t. \label{equ-partialtq}
	\end{equation}
	Besides,
	\begin{eqnarray}
		\int_0^{\infty} \dfrac{\partial}{\partial t} Q(t) \mathrm{d}t &=& Q(\infty)-Q(0)\notag\\
		&=& \tilde{h}_k \circ \tilde{f}_k^{-1}\left( \tilde{W}_k(\Omega_{\infty}) \right) - Q(0) \notag\\
		&=& \tilde{h}_k \circ \tilde{f}_k^{-1}\left( \tilde{W}_k(\Omega) \right) - Q(0) \label{equ-tildef} \\
		&=& -\varepsilon, \label{equ-varep}
	\end{eqnarray}
	where in (\ref{equ-tildef}) we used that $\tilde{W}_k(\Omega_t)$ preserves along the flow (\ref{equ-flowhy}), and in (\ref{equ-varep}) we used (\ref{equ-eps}). Combining (\ref{equ-partialtq}) and (\ref{equ-varep}), we get
	\begin{equation}
		\int_0^{\infty} \int_{M_t} E_k(\tilde{\kappa})-\dfrac{E_{k+1}(\tilde{\kappa})E_{k-1}(\tilde{\kappa})}{E_k(\tilde{\kappa})}   \mathrm{d}\mu_t \mathrm{d}t \leqslant \dfrac{C(\rho_{-}(\Omega))}{n-k}\varepsilon. \label{equ-integralepsilon1}
	\end{equation}
	
	For (\ref{equ-stabi2}), we also set
	\begin{equation}
		\varepsilon := \tilde{W}_{k+1}(\Omega) - \tilde{f}_{k+1}\circ \tilde{f}_k^{-1} (\tilde{W}_k(\Omega)).
	\end{equation}
    Note that along the flow \eqref{equ-flowhy}, we have
    \begin{eqnarray}
    	\dfrac{\partial}{\partial t}\tilde{W}_{k+1}(\Omega_t) &=& \int_{M_t} \left[ (\phi'-u) \dfrac{E_{k-1}(\tilde{\kappa})}{E_k(\tilde{\kappa})} - u \right] E_{k+1}(\tilde{\kappa}) \mathrm{d}\mu_t \notag\\
    	&=& \int_{M_t} (\phi'-u) \left[ \dfrac{E_{k-1}(\tilde{\kappa})E_{k+1}(\tilde{\kappa})}{E_{k}(\tilde{\kappa})} - E_k(\tilde{\kappa}) \right] \mathrm{d}\mu_t.
    \end{eqnarray}
    Then we obtain that
    \begin{eqnarray}
    	\int_0^{\infty} \int_{M_t} (\phi'-u) \left[ \dfrac{E_{k-1}(\tilde{\kappa})E_{k+1}(\tilde{\kappa})}{E_{k}(\tilde{\kappa})} - E_k(\tilde{\kappa}) \right] \mathrm{d}\mu_t \mathrm{d}t &=& -\varepsilon,
    \end{eqnarray}
    and (\ref{equ-integralepsilon1}) also holds by using \eqref{equ-phi'-u}.
	
	Denote $F=\dfrac{E_k(\tilde{\kappa})}{E_{k-1}(\tilde{\kappa})}$. It has been verified in \cite[Lemma 7.1, Lemma 7.2 and Proposition 7.4]{Locallyconstrained} that
	\begin{equation}
		0<C_1\leqslant F \leqslant C_2,\ \tilde{\kappa_i}\leqslant C_3,\ i=1,2,\cdots,n,
	\end{equation}
	where the constants $C_1,\ C_2,\ C_3$ depend only on $M$. Thus
	\begin{equation}
		\left| \dfrac{\phi'-u}{F} - u \right| \leqslant C(\rho_{-}(\Omega),M),
	\end{equation}
	and the Hausdorff distance between $\partial\Omega$ and $M_t$ can be estimated by
	\begin{eqnarray}
		\mathrm{dist}(\partial\Omega,M_t) &\leqslant& \mathrm{dist}((X(x,0),X(x,t)) \notag\\
		&\leqslant& \int_0^t \left|\partial_{\tau}X(x,\tau) \right|\mathrm{d}\tau = \int_0^t \left| \dfrac{\phi'-u}{F} - u \right|(x,\tau) \mathrm{d}\tau\notag\\
		&\leqslant & C(\rho_{-}(\Omega),M)t.
	\end{eqnarray}
	
	Note that $S^i_j=h^i_j-\delta^i_j$. After a direct computation, we have
	\begin{equation}
		\mathring{S}^i_j = S^i_j - \dfrac{H-n}{n}\delta^i_j = h^i_j - \dfrac{H}{n} \delta^i_j = \mathring{h}^i_j.
	\end{equation}
	Thus $|\mathring{S}|^2=|\mathring{A}|^2$, and the following inequality holds 
	\begin{equation}
		C(n)E_{k+1,n1}^2(\tilde{\kappa})| \mathring{A} |^2 \leqslant E_k(\tilde{\kappa})^2 - E_{k-1}(\tilde{\kappa}) E_{k+1}(\tilde{\kappa}),
	\end{equation}
	where $\tilde{\kappa}_1\leqslant\tilde{\kappa}_2\leqslant\cdots\leqslant\tilde{\kappa}_n$, $\tilde{\kappa}\in\Gamma_k^+$, $E^2_{k+1,n1}(\tilde{\kappa})=\dfrac{\partial^2 E_{k+1}(\tilde{\kappa})}{\partial\tilde{\kappa_1}\partial\tilde{\kappa_n}}>0$. We refer to \cite[Lemma 3.4]{chenguanlischeuer2022} and \cite[Lemma 4.2]{scheuer2023stability} for more details. Now
	\begin{eqnarray}
		&&\int_0^{\sqrt{\varepsilon}}\int_{M_t} |\mathring{A}|^2 \mathrm{d}\mu_t\mathrm{d}t \notag \\
		&=& \int_0^{\sqrt{\varepsilon}}\int_{M_t} \dfrac{E_{k+1,n1}^2(\tilde{\kappa})|\mathring{A}|^2}{E_k(\tilde{\kappa})}\dfrac{E_k(\tilde{\kappa})}{E^2_{k+1,n1}(\tilde{\kappa})}\mathrm{d}\mu_t\mathrm{d}t \notag\\
		&\leqslant& \dfrac{\max\limits_{t\in[0,\sqrt{\varepsilon}]}E_k(\tilde{\kappa})}{\min\limits_{t\in[0,\sqrt{\varepsilon}]}E_{k+1,n1}^2(\tilde{\kappa})} \int_0^{\sqrt{\varepsilon}}\int_{M_t} \dfrac{E^2_{k+1,n1}(\tilde{\kappa})|\mathring{A}|^2}{E_k(\tilde{\kappa})} \mathrm{d}\mu_t\mathrm{d}t \notag\\
		&\leqslant& C(n)\dfrac{\max\limits_{t\in[0,\sqrt{\varepsilon}]}E_k(\tilde{\kappa})}{\min\limits_{t\in[0,\sqrt{\varepsilon}]}E_{k+1,n1}^2(\tilde{\kappa})} 
		\int_0^{\sqrt{\varepsilon}}\int_{M_t} \dfrac{E_k(\tilde{\kappa})^2 - E_{k-1}(\tilde{\kappa}) E_{k+1}(\tilde{\kappa})}{E_k(\tilde{\kappa})} \mathrm{d}\mu_t \mathrm{d}t \\
		&\leqslant& C(M,\mathrm{dist}(\tilde{\kappa},\partial\Gamma_k^+),\rho_{-}(\Omega),n,k)\varepsilon,
	\end{eqnarray}
	where we used (\ref{equ-integralepsilon1}) in the last inequality. There exists  $t_{\varepsilon}\in(0,\sqrt{\varepsilon}]$ such that
	\begin{equation}
		\int_{M_{t_{\varepsilon}}} |\mathring{A}|^2 \mathrm{d}\mu_{t_{\varepsilon}} \leqslant C(M,\mathrm{dist}(\tilde{\kappa},\partial\Gamma_k^+),\rho_{-}(\Omega),n,k)\sqrt{\varepsilon}. \label{equ-integralepsilon}
	\end{equation} 
	At the same time,
	\begin{equation}
		\mathrm{dist}(\partial\Omega,M_{t_{\varepsilon}}) \leqslant C(\rho_{-}(\Omega),M,n,k)\sqrt{\varepsilon}.  \label{equ-dist1}
	\end{equation}
	
	In the following, we will use Theorem \ref{thm-stability} to prove that there exists a geodesic sphere  $S_{\mathbb{H}}$ such that $\mathrm{dist}(M_{t_{\varepsilon}},S_{\mathbb{H}})\leqslant C(M,\mathrm{dist}(\tilde{\kappa},\partial\Gamma_k^+),\rho_{-}(\Omega),n,k)\varepsilon^{\frac{1}{4}}$. We now follow \cite{Sahjwani2023StabilityOT} and contain the complete proof here. We view hyperbolic space  $\mathbb{H}^{n+1}$ as a ball of radius 2 denoted by $B_2(o)$ in Euclidean space $\mathbb{R}^{n+1}$, and set
	\begin{equation}
		r = \log(2+\rho) - \log(2-\rho).
	\end{equation}
	We have $\mathrm{d}r=\dfrac{4}{4-\rho^2}\mathrm{d}\rho$, $\sinh r=\dfrac{4\rho}{4-\rho^2}$. Let $\mathrm{e}^{\phi(\rho)}=\dfrac{4}{4-\rho^2}$, then
	\begin{equation}
		\overline{g}=\mathrm{d}r^2+\sinh^2 r g_{\mathbb{S}^n} = \mathrm{e}^{2\phi(\rho)}\left( \mathrm{d}\rho^2+\rho^2 g_{\mathbb{S}^n} \right) = \mathrm{e}^{2\phi(\rho)}\tilde{g}.
	\end{equation}
	Therefore, hypersurface $M_{t_{\varepsilon}}$ in $\mathbb{H}^{n+1}$ is conformally as a hypersurface $\tilde{M}_{t_{\varepsilon}}$ in $B_2(o)$. Since $M_{t_{\varepsilon}}$ is h-convex, then $\tilde{M}_{t_{\varepsilon}}$ is convex. Indeed, denote $\tilde{h}^i_j$ as the Weingarten matrix of $\tilde{M}_{t_{\varepsilon}}$, and let $\tilde{\nu}$ be its unit outward normal vector, then there holds (see \cite[Proposition 1.1.11]{curvatureproblem})
	\begin{equation}
		\mathrm{e}^{\phi(\rho)}h^i_j = \tilde{h}^i_j + \mathrm{d}\phi(\tilde{\nu})\delta^i_j.
	\end{equation}
	Note that $\mathrm{d}\phi=\dfrac{2\rho}{4-\rho^2}\mathrm{d}\rho$, we have
	\begin{eqnarray}
		\tilde{h}^i_j &=& \mathrm{e}^{\phi(\rho)} h^i_j- \mathrm{d}\phi(\tilde{\nu})\delta^i_j\notag\\
		&\geqslant& \dfrac{4}{4-\rho^2}h^i_j - \dfrac{2\rho}{4-\rho^2}\delta^i_j\notag\\
		((h^i_j-\delta^i_j)\geqslant 0) &\geqslant& \dfrac{4-2\rho}{4-\rho^2}\delta^i_j \notag\\
		&=& \dfrac{2}{2+\rho}\delta^i_j>0. 
	\end{eqnarray}
	We rescale $\tilde{M}_{t_{\varepsilon}}$ to be $\hat{M}_{t_{\varepsilon}}$ such that $|\hat{M}_{t_{\varepsilon}}|=|\mathbb{S}^n|$. Indeed, it suffices to set
	\begin{equation}
		\hat{M}_{t_{\varepsilon}}=\left( \dfrac{|\mathbb{S}^n|}{|\tilde{M}_{t_{\varepsilon}}|} \right)^{\frac{1}{n}}\tilde{M}_{t_{\varepsilon}}:=\gamma(n,\rho_{-}(\Omega))\tilde{M}_{t_{\varepsilon}}.
	\end{equation}
	Then $\hat{M}_{t_{\varepsilon}}$ is a convex hypersurface in $B_2(o)\subset\mathbb{R}^{n+1}$. Denote $\hat{g}$, $\mathring{\hat{A}}$ as the metric and trace free Weingarten matrix of $\hat{M}_{t_{\varepsilon}}$, then we have
	\begin{eqnarray}
		\mathring{\hat{A}} = \dfrac{1}{\gamma}\mathring{\tilde{A}},\ \ \mathrm{d}\mu_{\hat{g}}= \gamma^n \mathrm{d}\mu_{\tilde{g}},
	\end{eqnarray}
	\begin{equation}
		\mathring{\tilde{A}}=\mathrm{e}^{\phi(\rho)}\mathring{A},\ \ \mathrm{d}\mu_{\tilde{g}} = \mathrm{e}^{-n\phi(\rho)}\mathrm{d}\mu_g.
	\end{equation}
	Therefore,
	\begin{eqnarray}
		\|\mathring{\hat{A}}\|_{L^2(\hat{M}_{t_{\varepsilon}})} = \gamma^{\frac{n-2}{2}} \|\mathring{\tilde{A}}\|_{L^2(\tilde{M}_{t_{\varepsilon}})}, \label{equ-hattraceA}
	\end{eqnarray}
	\begin{eqnarray}
		\|\mathring{\tilde{A}}\|_{L^2(\tilde{M}_{t_{\varepsilon}})} = \left( \int_{M_{t_{\varepsilon}}} \mathrm{e}^{(2-n)\phi(\rho)} |\mathring{A}|^2 \mathrm{d}\mu_g \right)^{\frac{1}{2}} \leqslant C(n,\rho_{-}(\Omega)) \|\mathring{A}\|_{L^2(M_{t_{\varepsilon}})}.  \label{equ-tildetraceA}
	\end{eqnarray}
	Combining (\ref{equ-integralepsilon}), (\ref{equ-hattraceA}) and (\ref{equ-tildetraceA}), we obtain that
	\begin{equation}
		\|\mathring{\hat{A}}\|_{L^2(\hat{M}_{t_{\varepsilon}})} \leqslant  C(M,\mathrm{dist}(\tilde{\kappa},\partial\Gamma_k^+),\rho_{-}(\Omega),n,k)\varepsilon^{\frac{1}{4}}.
	\end{equation}
	We now set $p=2$ in Theorem \ref{thm-stability}, then there exist a parametrization $\varphi:\mathbb{S}^n\to \hat{M}_{t_{\varepsilon}}\subset B_2(o)$ and a point $\mathcal{P}\in B_2(o)$ such that
	\begin{eqnarray}
		\|\varphi-\mathrm{id}-\mathcal{P}\|_{W^{2,2}(\mathbb{S}^n)} \leqslant C(n)\|\mathring{\hat{A}}\|_{L^2(\hat{M}_{t_{\varepsilon}})} \leqslant  C(M,\mathrm{dist}(\tilde{\kappa},\partial\Gamma_k^+),\rho_{-}(\Omega),n,k)\varepsilon^{\frac{1}{4}}.
	\end{eqnarray} 
	This implies that $\hat{M}_{t_{\varepsilon}}$ is Hausdorff-close to some unit sphere, thus $\tilde{M}_{t_{\varepsilon}}$ is Hausdorff-close to a sphere with radius $\gamma^{-1}$. Therefore, there exists a geodesic sphere $S_{\mathbb{H}}$ such that
	\begin{equation}
		\mathrm{dist}(M_{t_{\varepsilon}},S_{\mathbb{H}}) \leqslant C(M,\mathrm{dist}(\tilde{\kappa},\partial\Gamma_k^+),\rho_{-}(\Omega),n,k)\varepsilon^{\frac{1}{4}}. \label{equ-dist2}
	\end{equation} 
	
	Combining (\ref{equ-dist1}) and (\ref{equ-dist2}), we obtain that
	\begin{eqnarray*}
		\mathrm{dist}(\partial\Omega,S_{\mathbb{H}}) &\leqslant& \mathrm{dist}(\partial\Omega,M_{t_{\varepsilon}})+\mathrm{dist}(M_{t_{\varepsilon}},S_{\mathbb{H}})\\
		&\leqslant& C(M,\mathrm{dist}(\tilde{\kappa},\partial\Gamma_k^+),\rho_{-}(\Omega),n,k)\left(\varepsilon^{\frac{1}{2}}+\varepsilon^{\frac{1}{4}}\right).
	\end{eqnarray*}
	These are (\ref{equ-stabi2}) and (\ref{equ-stabi}).
\end{proof}


\bibliographystyle{plainnat}
\bibliography{reference.bib}

\begin{thebibliography}{21}
\providecommand{\natexlab}[1]{#1}
\providecommand{\url}[1]{\texttt{#1}}
\expandafter\ifx\csname urlstyle\endcsname\relax
  \providecommand{\doi}[1]{doi: #1}\else
  \providecommand{\doi}{doi: \begingroup \urlstyle{rm}\Url}\fi

\bibitem[Andrews et~al.(2021)Andrews, Chen, and Wei]{Ben2021}
Ben Andrews, Xuzhong Chen, and Yong Wei.
\newblock Volume preserving flow and {Alexandrov}-{Fenchel} type inequalities
  in hyperbolic space.
\newblock \emph{Journal of the European Mathematical Society}, 23\penalty0
  (7):\penalty0 2467--2509, 2021.

\bibitem[Borisenko and Miquel(1999)]{Borisenko1999TotalCO}
Alexandr~A. Borisenko and Vicente Miquel.
\newblock Total curvatures of convex hypersurfaces in hyperbolic space.
\newblock \emph{Illinois Journal of Mathematics}, 43:\penalty0 61--78, 1999.

\bibitem[Brendle et~al.(2018)Brendle, Guan, and Li]{B-G-L}
Simon Brendle, Pengfei Guan, and Junfang Li.
\newblock An inverse curvature type hypersurface flow in {$\mathbb{H}^{n+1}$}.
\newblock \emph{preprint}, 2018.

\bibitem[Chen et~al.(2022)Chen, Guan, Li, and Scheuer]{chenguanlischeuer2022}
Chuanqiang Chen, Pengfei Guan, Junfang Li, and Julian Scheuer.
\newblock A fully-nonlinear flow and quermassintegral inequalities in the
  sphere.
\newblock \emph{Pure and Applied Mathematics Quarterly}, 18\penalty0
  (2):\penalty0 437--461, 2022.

\bibitem[Gerhardt(2006)]{curvatureproblem}
Claus Gerhardt.
\newblock \emph{Curvature Problems}, volume~39 of \emph{Series in Geometry and
  topology}.
\newblock International Press, 2006.

\bibitem[Guan(2014)]{Guan2014}
Pengfei Guan.
\newblock \emph{Curvature Measures, Isoperimetric Type Inequalities and Fully
  Nonlinear PDEs}, pages 47--94.
\newblock Springer International Publishing, Cham, 2014.

\bibitem[Guan and Li(2009)]{Guan2009TheQI}
Pengfei Guan and Junfang Li.
\newblock The quermassintegral inequalities for $k$-convex starshaped domains.
\newblock \emph{Advances in Mathematics}, 221:\penalty0 1725--1732, 2009.

\bibitem[Guan and Li(2015)]{Guan2013AMC}
Pengfei Guan and Junfang Li.
\newblock A mean curvature type flow in space forms.
\newblock \emph{International Mathematics Research Notices}, 2015\penalty0
  (13):\penalty0 4716--4740, 2015.

\bibitem[Guan and Li(2021)]{Li2021IsoperimetricTI}
Pengfei Guan and Junfang Li.
\newblock Isoperimetric type inequalities and hypersurface flows.
\newblock \emph{Journal of Mathematical Study}, 54:\penalty0 56--80, 2021.

\bibitem[Hu et~al.(2022)Hu, Li, and Wei]{Locallyconstrained}
Yingxiang Hu, Haizhong Li, and Yong Wei.
\newblock Locally constrained curvature flows and geometric inequalities in
  hyperbolic space.
\newblock \emph{Mathematische Annalen}, 382:\penalty0 1425--1474, 2022.

\bibitem[Krylov(1987)]{nonlinearparabolic}
Nicolai~V. Krylov.
\newblock \emph{Nonlinear Elliptic and Parabolic Equations of the Second
  Order}, volume~7.
\newblock Springer, 1987.

\bibitem[Kwong and Wei(2023)]{KWONG2023109213}
Kwok-Kun Kwong and Yong Wei.
\newblock Geometric inequalities involving three quantities in warped product
  manifolds.
\newblock \emph{Advances in Mathematics}, 430:\penalty0 109213, 2023.

\bibitem[Kwong et~al.(2022)Kwong, Wei, Wheeler, and Wheeler]{twodim2022}
Kwok-Kun Kwong, Yong Wei, Glen Wheeler, and Valentina-Mira Wheeler.
\newblock On an inverse curvature flow in two-dimensional space forms.
\newblock \emph{Mathematische Annalen}, 384:\penalty0 1--24, 2022.

\bibitem[Li and Xu(2022)]{Li2022Hyperbolic}
Haizhong Li and Botong Xu.
\newblock Hyperbolic $p$-sum and horospherical $p$-{Brunn}-{Minkowski} theory
  in hyperbolic space.
\newblock \emph{arXiv:2211.06875}, 2022.

\bibitem[Nirenberg(1966)]{Nirenberg1966AnEI}
Louis Nirenberg.
\newblock An extended interpolation inequality.
\newblock \emph{Annali Della Scuola Normale Superiore Di Pisa-classe Di
  Scienze}, 20:\penalty0 733--737, 1966.

\bibitem[Rosa and Gioffr{\`e}(2018)]{DeRosa2018QuantitativeSF}
Antonio~De Rosa and Stefano Gioffr{\`e}.
\newblock Quantitative stability for anisotropic nearly umbilical
  hypersurfaces.
\newblock \emph{The Journal of Geometric Analysis}, 29:\penalty0 2318--2346,
  2018.

\bibitem[Sahjwani and Scheuer(2023)]{Sahjwani2023StabilityOT}
Prachi Sahjwani and Julian Scheuer.
\newblock Stability of the quermassintegral inequalities in hyperbolic space.
\newblock \emph{The Journal of Geometric Analysis}, 34, 2023.

\bibitem[Scheuer(2023)]{scheuer2023stability}
Julian Scheuer.
\newblock Stability from rigidity via umbilicity.
\newblock \emph{arXiv:2103.07178v2}, 2023.

\bibitem[Solanes(2005)]{G-Solanes05}
Gil Solanes.
\newblock Integral geometry and the {Gauss-Bonnet} theorem in constant
  curvature spaces.
\newblock \emph{Transactions of the American Mathematical Society},
  358:\penalty0 1105--1115, 2005.

\bibitem[Wang and Xia(2014)]{Wang-Xia2014}
Guofang Wang and Chao Xia.
\newblock Isoperimetric type problems and {Alexandrov}-{Fenchel} type
  inequalities in the hyperbolic space.
\newblock \emph{Advances in Mathematics}, 259:\penalty0 532--556, 2014.

\bibitem[Wei and Xiong(2022)]{Wei2022AFN}
Yong Wei and Changwei Xiong.
\newblock A fully nonlinear locally constrained anisotropic curvature flow.
\newblock \emph{Nonlinear Analysis}, 217:\penalty0 112760, 2022.

\end{thebibliography}

\end{document}